\documentclass[11pt, reqno]{amsart}
\usepackage[latin1]{inputenc}
\usepackage{amssymb}
\usepackage{pdfsync}
\usepackage[english]{babel}

\usepackage{yhmath}

\usepackage{graphicx}
\usepackage[normalem]{ulem}
\usepackage{fullpage}
\usepackage{color}
\usepackage{amsmath}
\usepackage{amsfonts}
\usepackage{mathrsfs}
\usepackage{t1enc , graphicx}
\usepackage{verbatim}
\usepackage{bbm}
\usepackage{enumitem}
\usepackage{tikz-cd}
\usepackage{capt-of}
\usepackage[pdftex, colorlinks=true,urlcolor=blue,linkcolor=blue,citecolor=blue, linktocpage]{hyperref}
\usepackage[capitalize]{cleveref}
\makeatletter
\makeatother


\usepackage[left= 1.2 in, right= 1.2 in ,top= 1.2 in, bottom = 2.1 in]{geometry}

\addtolength{\textheight}{3 cm}
\newcommand{\R}{\mathbb{R}}

\newcommand{\C}{\mathbb{C}}

\newcommand{\Z}{\mathbb{Z}}
\newcommand{\N}{\mathbb{N}}

\newcommand{\E}{\mathbb{E}}

\newcommand{\bX}{\mathbf{X}}
\newcommand{\bY}{\mathbf{Y}}
\newcommand{\bZ}{\mathbf{Z}}
\newcommand{\bK}{\mathbf{K}}

\newcommand{\calB}{\mathcal{B}}
\newcommand{\calD}{\mathcal{D}}
\newcommand{\calE}{\mathcal{E}}

\newcommand{\wL}{\widehat{\Lambda}}

\newcommand{\mb}{\mathbf}

\newcommand{\supp}{\operatorname{supp}}

\newcommand\redsout{\bgroup\markoverwith{\textcolor{red}{\rule[0.5ex]{2pt}{0.4pt}}}\ULon}

\newtheorem{theorem}{Theorem}[section]
\newtheorem*{theorem*}{Theorem}
\newtheorem{proposition}[theorem]{Proposition}
\newtheorem{lemma}[theorem]{Lemma}
\newtheorem{claim}[theorem]{Claim}
\newtheorem{corollary}[theorem]{Corollary}
\newtheorem*{corollary*}{Corollary}
\newtheorem{question}[theorem]{Question}
\newtheorem{conjecture}[theorem]{Conjecture}

\theoremstyle{definition}
\newtheorem*{definition*}{Definition}
\newtheorem{definition}[theorem]{Definition}

\theoremstyle{remark}

\newtheorem*{remark*}{Remark}
\newtheorem{remark}[theorem]{Remark}


\theoremstyle{plain}
\newcounter{MainTheoremCounter}

\theoremstyle{plain}
\newcounter{OldTheoremCounter}

\newtheorem{Oldtheorem}[OldTheoremCounter]{Theorem}

\newcommand{\vertiii}[1]{{\left\vert\kern-0.25ex\left\vert\kern-0.25ex\left\vert #1 
    \right\vert\kern-0.25ex\right\vert\kern-0.25ex\right\vert}}

\newcommand{\AuthornoteA}[2]{{\sf\small\color{magenta}{[#1: #2]}}}
\newcommand{\AuthornoteH}[2]{{\sf\small\color{blue}{[#1: #2]}}}

\newcommand{\Hnote}{\AuthornoteH{H}}
\newcommand{\Anote}{\AuthornoteA{A}}

\begin{document}
\author{John T. Griesmer}    
 \author{Anh N. Le}
 \author{Th\'ai Ho\`ang L\^e}

\address{Department of Applied Mathematics and Statistics\\
Colorado School of Mines\\
1005 14th Street, Golden, CO 80401} 
\email{jtgriesmer@gmail.com}

\address{Department of Mathematics\\
	Ohio State University\\
	231 W. 18th Ave., Columbus, OH 43210}
\email{le.286@osu.edu}

\address{Department of Mathematics\\
    University of Mississippi\\
    University, MS 38677, USA}
\email{leth@olemiss.edu}

\title{Bohr sets in sumsets II: countable abelian groups}

\subjclass[2020]{Primary: 37A45; Secondary: 11B13, 43A07}

\keywords{Bohr set, Bohr neighborhood, Bohr compactification, sumset, correspondence principle, countable abelian group,  Kronecker factor, homomorphism}
\maketitle

\begin{abstract}
    We prove three results concerning the existence of Bohr sets in threefold sumsets. More precisely, letting $G$ be a countable discrete abelian group and $\phi_1, \phi_2, \phi_3: G \to G$ be commuting endomorphisms whose images have finite indices, we show that
    \begin{enumerate}
        \item If $A \subset G$ has positive upper Banach density and $\phi_1 + \phi_2 + \phi_3 = 0$, then $\phi_1(A) + \phi_2(A) + \phi_3(A)$ contains a Bohr set. This generalizes a theorem of Bergelson and Ruzsa in $\mathbb{Z}$ and a recent result of the first author.
        
        \item For any partition $G = \bigcup_{i=1}^r A_i$, there exists an $i \in \{1, \ldots, r\}$ such that $\phi_1(A_i) + \phi_2(A_i) - \phi_2(A_i)$ contains a Bohr set. This generalizes a result of the second and third authors from $\mathbb{Z}$ to countable abelian groups.
        
        \item If $B, C \subset G$ have positive upper Banach density and $G = \bigcup_{i=1}^r A_i$ is a partition, $B + C + A_i$ contains a Bohr set for some $i \in \{1, \ldots, r\}$. This is a strengthening of a theorem of Bergelson, Furstenberg, and Weiss.
    \end{enumerate}
    All results are quantitative in the sense that the radius and rank of the Bohr set obtained depends only on the indices $[G:\phi_j(G)]$, the upper Banach density of $A$ (in (1)), or the number of sets in the given partition (in (2) and (3)). 
\end{abstract}

\tableofcontents

\section{Introduction}
This paper continues the investigation set forth in \cite{ll}. Let $G$ be an abelian topological group. If $A, B \subset G$, the \emph{sumset} and \emph{difference set} of $A$ and $B$ are $A + B: = \{a + b: a \in A, b \in B \}$ and $A - B:=\{a - b: a \in A, b \in B \}$, respectively.  For $a\in G$, the \emph{translate} $a+B$ is $\{a+B:b\in B\}$. If $s \in \Z$, we define $sA:=\{sa: a \in A\}$.  A \emph{character} of $G$ is a continuous homomorphism from $G$ to $S^1:=\{z\in \mathbb C: |z|=1\}$.

Many classical results in additive combinatorics state, roughly, that sumsets are more structured than their summands.  Such results often quantify the structure found in sumsets in terms of \emph{Bohr sets}, which we define here.
For a finite set $\Lambda$ of characters of $G$  and a constant $\eta > 0$, the set 
\begin{equation*} \label{eq:bohr1}
B(\Lambda; \eta) := \{ x \in G : | \gamma(x)-1 | < \eta \textup{ for all } \gamma \in \Lambda\}
\end{equation*} 
is called a \emph{Bohr set}, a \emph{Bohr$_0$-set}, or a \emph{Bohr neighborhood of $0$} in the literature. In this paper we use mostly the first nomenclature. The set $B(\Lambda; \eta)$ is also called a Bohr-$(k, \eta)$ set where $k = |\Lambda|$. We refer to $\eta$ as the \emph{radius} and $k$ as the \emph{rank} of the Bohr set. By a \textit{translate of a Bohr set}, or a \textit{Bohr neighborhood}, we mean a set of the form $a + B(\Lambda; \eta)$ for some $a \in G$.



After summarizing previous results in Sections \ref{sec:PreviousZ} and \ref{sec:PreviousCompact}, we state our new results in Section \ref{sec:NewResults}.

\subsection{Previous results in \texorpdfstring{$\Z$}{Z}}\label{sec:PreviousZ} 

If $A \subset \Z$, the \textit{upper Banach density} of $A$ is
\begin{equation*} \label{eq:density_Z}
    d^*(A) =  \limsup_{N \to \infty} \max_{M \in \Z} \frac{|A \cap \{M+1, \ldots, M + N\}|}{N}.
\end{equation*}
The study of Bohr sets in sumsets started with the following important theorem of Bogolyubov \cite{bogo}. 

\begin{Oldtheorem}[Bogolyubov] \label{th:bogo}
If $A \subset \Z$ has positive upper Banach density, 
then $A - A + A - A$ contains a Bohr set whose rank and radius depend only on $d^*(A)$.
\end{Oldtheorem}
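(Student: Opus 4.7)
The plan is to exploit the identity that $f := 1_A * 1_{-A} * 1_A * 1_{-A}$ is supported exactly on $A - A + A - A$ and has nonnegative Fourier transform $|\widehat{1_A}|^4$. I will produce a Bohr set on which $f$ is bounded below by a positive quantity depending only on $\alpha := d^*(A)$, which immediately yields the desired containment. The key inputs are Parseval's identity (to bound the number of large Fourier coefficients of $1_A$) and a spectral decomposition of $f$ according to those coefficients.

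First I pass to a finite model. After a translation, fix a F\o lner sequence of intervals $I_n = \{1, \ldots, N_n\}$ with $|A \cap I_n|/N_n \to \alpha$, set $A_n := A \cap I_n$, and embed $A_n$ into $\Z_{L_n}$ with $L_n \geq 4 N_n$ so that fourfold sums and differences of elements of $A_n$ do not wrap around. On $\Z_{L_n}$, the function $g_n := 1_{A_n} * 1_{-A_n} * 1_{A_n} * 1_{-A_n}$ has Fourier transform proportional to $|\widehat{1_{A_n}}|^4 \geq 0$. Define the large spectrum $\Lambda_n := \{\xi \in \Z_{L_n} : |\widehat{1_{A_n}}(\xi)|^2 \geq \epsilon\}$, with $\epsilon$ to be chosen small in terms of $\alpha$; Parseval gives $|\Lambda_n| \leq 1/\epsilon$. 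For $x$ in the cyclic Bohr set $B_n := \{x \in \Z_{L_n} : \|x\xi/L_n\| < \eta \text{ for all } \xi \in \Lambda_n\}$, Fourier inversion splits $g_n(x)$ into a main term over $\Lambda_n$, which is close to its $\xi = 0$ value $(|A_n|/L_n)^4 \gtrsim \alpha^4$ once $\eta$ is small, plus an off-$\Lambda_n$ error bounded by $\epsilon \cdot \sum_\xi |\widehat{1_{A_n}}(\xi)|^2 \leq \epsilon$ via Plancherel. Choosing $\epsilon$ and $\eta$ small enough in terms of $\alpha$ makes the main term dominate, so $g_n(x) > 0$ on $B_n$; since sums do not wrap, this gives $B_n \subset A_n - A_n + A_n - A_n \subset A - A + A - A$ upon identifying $\Z_{L_n}$ with the symmetric window $\{-L_n/2, \ldots, L_n/2\}$.

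The main obstacle is transferring the cyclic Bohr sets $B_n$ back to an honest Bohr set in $\Z$. The frequencies $\xi_n^{(1)}/L_n, \ldots, \xi_n^{(k_n)}/L_n$ with $\xi_n^{(j)} \in \Lambda_n$ lie in $[0,1)$, and the cardinalities $k_n = |\Lambda_n| \leq 1/\epsilon$ are bounded uniformly by a quantity depending only on $\alpha$. Passing to a subsequence in which $k_n = k$ is constant and each $\xi_n^{(j)}/L_n$ converges to some $\theta_j \in [0,1)$, one verifies that for every fixed $M$, the set $\{m \in \Z \cap [-M, M] : \|m \theta_j\| < \eta/2 \text{ for all } j\}$ is eventually contained in $B_n$ (viewed inside $\Z$ via the identification above) and hence in $A - A + A - A$. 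Letting $M \to \infty$ yields $B(\{\chi_{\theta_j}\}_{j=1}^k; \eta') \subset A - A + A - A$, where $\chi_{\theta_j}(m) = e^{2\pi i m \theta_j}$ and $\eta'$ is comparable to $\eta$. Both the rank $k$ and radius $\eta'$ are controlled purely in terms of $\alpha$, as required.
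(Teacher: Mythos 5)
Your proposal is correct, but there is nothing in the paper to compare it against line by line: Theorem \ref{th:bogo} is quoted as Bogolyubov's classical result with a citation to \cite{bogo}, and the paper never proves it. Your argument is the standard finite-model Fourier proof: pass to intervals witnessing $d^*(A)$, embed in $\Z_{L_n}$ with $L_n$ comparable to $4N_n$ so fourfold signed sums do not wrap, use that $\widehat{g_n}=|\widehat{1_{A_n}}|^4\ge 0$, split the inversion formula into the large spectrum (of size at most $1/\epsilon$ by Parseval) plus an error of size at most $\epsilon$, and then a pigeonhole-plus-compactness argument on the frequencies $\xi_n^{(j)}/L_n$ to turn the cyclic Bohr sets into an honest Bohr set in $\Z$ of rank $O(d^*(A)^{-4})$ and radius depending only on $d^*(A)$. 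This is genuinely different in flavor from the machinery the paper builds for its generalizations: there, one works infinitarily with an extremal invariant mean $\nu$, the positive definite function $1_A*_\nu 1_{-A}$, and the Radon--Nikodym densities $\rho_A^{\nu}$ on the Bohr compactification (as in \cref{prop:correspondence_principle_bohr}); Bogolyubov's theorem falls out by noting that $\rho_A^{\nu}*\rho_{-A}^{\nu}*\rho_A^{\nu}*\rho_{-A}^{\nu}$ is continuous with value at least $d^*(A)^4$ at $0$. That route (essentially Bogolyubov's and F\o lner's \cite{folner1}) generalizes painlessly to arbitrary discrete abelian groups and meshes with the rest of the paper, while your finite-model route is more elementary, gives explicit bounds, but is tied to $\Z$ (or $\Z^d$).

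Three small points to tighten, none fatal. The translation normalizing $A$ must depend on $n$ (you take a different window for each $N_n$); this is harmless because $A-A+A-A$ is translation invariant, but say so. You should fix $L_n$ comparable to $4N_n$ rather than merely $L_n\ge 4N_n$, since otherwise $|A_n|/L_n$ need not be bounded below in terms of $d^*(A)$ and your lower bound $(|A_n|/L_n)^4\gtrsim d^*(A)^4$ fails; your own density claim implicitly forces this choice. Finally, in the main term it is cleaner to observe that once $\eta\le 1/4$ every phase over the large spectrum has nonnegative real part, so the real part of the main term is at least the $\xi=0$ contribution; "close to its $\xi=0$ value" is not literally what you need (the main term may be much larger), though the inequality you use is the correct one.
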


While it originated from the study of almost periodic functions, Bogolyubov's theorem is now a standard tool in additive combinatorics. It was used in Ruzsa's proof of Freiman's theorem \cite{ruzsa6} and in Gowers's proof of Szemer\'edi's theorem \cite{gowers}.  

 F{\o}lner \cite{folner1} showed that the last two summands in Bogolyubov's theorem are ``almost'' redundant by proving that $A - A$ already contains a set of the form $B\setminus E$, where $B$ is a Bohr set and $d^*(E)=0$. 
The exceptional set $E$ is unavoidable: Kriz  \cite{kriz} demonstrated that there exists a set $A$ of positive upper Banach density for which $A - A$ contains no Bohr sets. The first author \cite{griesmer-separating-bohr} showed that there is a set $A$ having $d^*(A)>0$ such that $A-A$ contains no Bohr neighborhood of any integer.  

Hegyv\'ari  and Ruzsa \cite{hr} generalized Bogolyubov's theorem in a different direction, showing that there exist ``many'' $a \in \Z$ for which $A - A + A - a$ contains a Bohr set. Bj\"orklund and the first author \cite[Theorem 1.1]{bg} strengthened this result by providing explicit bounds on the rank and radius of such a Bohr set, and generalized the result to all countable amenable discrete groups (and hence all countable discrete abelian groups).

Regarding more general threefold sumsets, Bergelson and Ruzsa proved the following:

\begin{Oldtheorem}[{\cite[Theorem 6.1]{br}}] 
\label{th:br}
Let $s_1, s_2, s_3$ be non-zero integers satisfying $s_1 +s_2+s_3 = 0$. If $A \subset \Z$ has positive upper Banach density, then $s_1 A+s_2 A+s_3 A$ contains a Bohr set whose rank and radius depend only on $s_1, s_2, s_3$ and $d^*(A)$. 
\end{Oldtheorem}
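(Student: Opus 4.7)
The plan is to invoke Furstenberg's correspondence principle to translate into an ergodic-theoretic question, use the fact that the Kronecker factor is characteristic for three-term nonconventional averages, and extract the Bohr set from a Fourier expansion on the resulting compact abelian rotation.

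By the correspondence principle there is an ergodic measure-preserving system $(X, \mathcal{B}, \mu, T)$ and a measurable set $E \in \mathcal{B}$ with $\mu(E) = d^*(A)$ such that $d^*(A \cap (A - k_1) \cap (A - k_2)) \ge \mu(E \cap T^{-k_1}E \cap T^{-k_2}E)$ for all $k_1, k_2 \in \Z$. Writing $a_j = a_3 + k_j$ for $j = 1, 2$ and using $s_1 + s_2 + s_3 = 0$, one sees that $s_1 a_1 + s_2 a_2 + s_3 a_3 = s_1 k_1 + s_2 k_2$, so $n \in s_1 A + s_2 A + s_3 A$ if and only if there exist $k_1, k_2 \in \Z$ with $s_1 k_1 + s_2 k_2 = n$ and $A \cap (A - k_1) \cap (A - k_2) \ne \emptyset$. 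Fixing a particular solution $(k_1^0, k_2^0)$ and writing the general one as $(k_1^0 + s_2 m, k_2^0 - s_1 m)$, it suffices to show that the Ces\`aro average
\[
L(n) := \lim_{N \to \infty} \frac{1}{N} \sum_{m=0}^{N-1} \mu\bigl(E \cap T^{-(k_1^0 + s_2 m)}E \cap T^{-(k_2^0 - s_1 m)}E\bigr)
\]
admits a positive lower bound, depending only on $d^*(A)$ and $s_1, s_2, s_3$, whenever $n$ lies in a suitable Bohr set.

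Because the Kronecker factor is characteristic for three-term ergodic averages, $L(n)$ equals the analogous average computed on a compact abelian rotation $(Z, m_Z, R_\alpha)$, with $\mathbf{1}_E$ replaced by $g := \E[\mathbf{1}_E \mid Z]$, a nonnegative function satisfying $\int g\,dm_Z = d^*(A)$ and $\|g\|_\infty \le 1$. Expanding $g$ in its Fourier series on $Z$ and carrying out the Ces\`aro average in $m$ forces the compatibility relation $\chi_1(\alpha)^{s_1} = \chi_2(\alpha)^{s_3}$ among the three contributing characters and yields
\[
L(n) = \sum_{(\chi_1, \chi_2) \in S} \hat g(\chi_1)\, \hat g(\chi_2)\, \hat g(\overline{\chi_1 \chi_2})\, \gamma_{\chi_1, \chi_2}(n),
\]
where $S$ is the set of compatible pairs and $\gamma_{\chi_1, \chi_2}(n) = \chi_2(\alpha)^{k_1^0} \overline{(\chi_1 \chi_2)(\alpha)}^{k_2^0}$; the compatibility relation ensures this is independent of the choice of $(k_1^0, k_2^0)$ and defines a genuine character of $\Z$ as a function of $n$.

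The trivial pair $\chi_1 = \chi_2 = \mathbf{1}$ contributes $d^*(A)^3 > 0$. By Parseval, $\sum_\chi |\hat g(\chi)|^2 \le d^*(A)$, so for any threshold $\delta > 0$ only finitely many characters satisfy $|\hat g(\chi)| > \delta$; the tail contribution from compatible pairs in which at least one of the three Fourier coefficients has absolute value below $\delta$ is controlled by a Cauchy--Schwarz/energy argument. Choosing $\delta$ small enough in terms of $d^*(A)$, so that this tail is at most $\tfrac12 d^*(A)^3$, and defining the Bohr set by requiring $|\gamma_{\chi_1, \chi_2}(n) - 1| < \eta$ for each of the finitely many compatible pairs surviving the threshold (with $\eta$ sufficiently small in terms of $\delta$ and $d^*(A)$) then forces $L(n) \ge \tfrac12 d^*(A)^3 > 0$ on this Bohr set. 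The main obstacle is the quantitative bookkeeping: one must verify that each surviving $\gamma_{\chi_1, \chi_2}$ really is a character of $\Z$ (it is, via the explicit formula and the compatibility relation, which is where the coefficients $s_1, s_2, s_3$ enter), that the number of such characters is controlled only by $d^*(A)$ and $s_1, s_2, s_3$, and that the resulting rank and radius inherit the advertised uniform dependence.
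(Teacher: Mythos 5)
The paper itself does not reprove this statement: it quotes it from Bergelson--Ruzsa \cite{br} and recovers it as the special case $G=\Z$, $\phi_j(x)=s_jx$ of \cref{th:main-density}, whose proof has exactly the skeleton you chose -- Furstenberg correspondence, characteristicness of the Kronecker factor for the two-parameter linear averages (via \cite{abb}), reduction to a convolution-type double integral over a compact rotation group -- except that the Bohr set is then extracted by citing the compact-group result of Part I (\cite[Proposition 4.3]{ll}) and pushing a Bohr set forward under $w\mapsto s_3w$ (\cref{lem:phiB_Bohr}), rather than by an explicit Fourier expansion. Your large-spectrum computation on the Kronecker factor is essentially a by-hand proof of that black box in the rotation case (and is close to the original argument of \cite{br}); your internal formulas (the compatibility relation $\chi_1(\alpha)^{s_1}=\chi_2(\alpha)^{s_3}$ and the expression for $\gamma_{\chi_1,\chi_2}$) are consistent with your labelling, and well-definedness under change of particular solution does follow from compatibility. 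So the route is sound, but two steps need repair before this is a proof.

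First, the divisibility issue created by your parametrization: $s_1k_1+s_2k_2=n$ has a solution only when $d:=\gcd(s_1,s_2)$ divides $n$, and indeed $s_1A+s_2A+s_3A\subset d\Z$. Your $\gamma_{\chi_1,\chi_2}$ are therefore characters of $d\Z$ only; after extending them to $\Z$, the set $\{n:|\gamma_{\chi_1,\chi_2}(n)-1|<\eta \text{ for all surviving pairs}\}$ need not be contained in the sumset (for $n\notin d\Z$ the quantity $L(n)$ is not even defined). You must add the condition $n\equiv 0 \pmod d$, which is itself a rank-one Bohr condition with radius depending only on $s_1,s_2$; easy to fix, but as written the final containment is false. (The paper avoids this by parametrizing $\phi_1(w)$ inside the sumset and using \cref{lem:phiB_Bohr} at the end.) Second, the tail estimate is the crux and is only asserted. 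The naive bound, replacing one small coefficient by the threshold and summing the remaining two over all pairs, diverges because $\sum_\chi|\hat g(\chi)|$ need not be finite; you must first show the series over the compatible set $S$ converges absolutely. The saving fact is that a character of the Kronecker group is determined by its value at $\alpha$ (density of $\Z\alpha$), so the relation $\chi_1(\alpha)^{s_1}=\chi_2(\alpha)^{s_3}$ admits at most $|s_3|$ choices of $\chi_2$ for fixed $\chi_1$, at most $|s_1|$ choices of $\chi_1$ for fixed $\chi_2$, and at most $|s_2|$ pairs for fixed product $\chi_1\chi_2$; with these bounded fibers, Cauchy--Schwarz gives $\sum_{S}|\hat g(\chi_1)\hat g(\chi_2)\hat g(\overline{\chi_1\chi_2})|\le C(s_1,s_2,s_3)\,\|g\|_2^2\,\sup_\chi|\hat g(\chi)|$ and the thresholded tail is $O_{s}(\theta\, d^*(A))$, which closes the positivity argument and yields the advertised uniform rank and radius (the large spectrum has size at most $d^*(A)/\theta^2$). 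This bounded-fiber fact is exactly where the hypothesis $s_1,s_2,s_3\neq 0$ (in the paper's general setting, the finite-index conditions $[K:\tilde\phi_j(K)]<\infty$) enters, and it needs to be made explicit rather than subsumed under ``a Cauchy--Schwarz/energy argument.''
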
 
Since any Bohr set in $\Z$ must contain $0$, the condition $s_1 + s_2 + s_3 = 0$ is easily seen to be necessary by taking $A = M \Z + 1$ for some $M > |s_1| + |s_2| + |s_3|$. In particular, one cannot expect $A + A - A$ to contain a Bohr set for every $A$ of positive upper Banach density. When $(s_1, s_2, s_3) = (1, 1, -2)$, \cref{th:br}  generalizes \cref{th:bogo}, since $A+A-2 A \subset A+A-A-A$.   

While the problem of finding Bohr sets in sumsets where the summands have positive upper Banach density has attracted much attention, the analogous question concerning partitions was little studied until recently, and the situation is less well understood. The following question, popularized by Katznelson \cite{katznelson} and Ruzsa \cite[Chapter 5]{ruzsabook}, is a well-known open problem in additive combinatorics and dynamical systems.

\begin{question} \label{q:kr}
If $\Z = \bigcup_{i=1}^r A_i$, must one of the difference sets $A_i - A_i$ contain a Bohr set?
\end{question}
In terms of dynamical systems, Question \ref{q:kr} asks if every set of recurrence for minimal \textit{isometries} (also known as a set of \textit{Bohr recurrence}) is also a set of recurrence for minimal \textit{topological systems}. See \cite{gkr} for a detailed account of the history of \cref{q:kr} and many equivalent formulations.  See \cite{griesmer-special-cases} for more equivalent formulations and resolution of some special cases.

Regarding three summands, the second and third authors proved the following partition analogue of \cref{th:br}.
\begin{Oldtheorem}[{\cite[Theorem 1.4]{ll}}] \label{th:ll}\ 
 \begin{enumerate}[label=(\roman*)]
 \item   Let $s_1, s_2 \in \Z \setminus \{0\}$. For any partition $\Z = \bigcup_{i=1}^r A_i$, there is an $i$ such that $s_1 A_i + s_2 A_i - s_2 A_i$ contains a Bohr set whose rank and radius depend only on $ s_1, s_2$ and $r$.
\item For any partition $\Z = \bigcup_{i=1}^r A_i$, there is an $i$ such that $A_i - A_i + sA_i$ contains a Bohr set for any $s \in \Z \setminus \{0\}$.
    \end{enumerate}
\end{Oldtheorem}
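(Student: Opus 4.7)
The plan is to use the Furstenberg correspondence principle to pass to a topological dynamical system, and then reduce the sumset statement to an analogous statement on the Kronecker factor of that system, where sumsets of positive-measure sets are tractable via Steinhaus-type theorems.

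For part (i), I would lift the partition $\Z = \bigsqcup_{i=1}^r A_i$ to a clopen partition of a minimal topological $\Z$-system $(X, T)$ with an invariant probability measure $\mu$ such that $\mu(\tilde A_i) = d^*(A_i)$. Pigeonhole gives some $i$ with $\mu(\tilde A_i) \geq 1/r$. Pushing forward to the Kronecker factor $\pi : X \to K$ yields a positive-Haar-measure set $A_i^{\#} \subset K$, and a neighborhood of $0$ in $K$ pulls back via the rotation $\rho : \Z \to K$ to a Bohr set in $\Z$. The task thus reduces to showing that $s_1 A_i^{\#} + s_2 A_i^{\#} - s_2 A_i^{\#}$ contains a neighborhood of $0$ in $K$. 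By a Steinhaus-type argument, $s_2 A_i^{\#} - s_2 A_i^{\#}$ already contains a neighborhood $U$ of $0$ in the closed, finite-index subgroup $s_2 K$; the remaining step is to add $s_1 A_i^{\#}$ and fill up a full neighborhood of $0$ in $K$. To arrange this, I would refine the original partition of $\Z$ by the finitely many cosets of the preimage $\rho^{-1}(s_2 K)$ and reapply pigeonhole, producing a cell whose image in $K$ meets every coset of $s_2 K$ in a set of positive Haar measure.

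For part (ii), the same cell $A_i$ must work for every nonzero $s$, so a single pigeonhole will not suffice; instead, one must iteratively refine the partition of $\Z$ along the cosets of the finite-index subgroups $s\Z$ (and their projections to $K$) for $s = 1, 2, 3, \ldots$, and extract a diagonal cell that retains positive density while meeting every coset of every such subgroup in positive measure. The principal obstacle I anticipate is the quantitative bookkeeping: the correspondence-principle step yields the desired sumset containment only up to an exceptional set of zero upper Banach density, which is precisely the source of the Kriz obstruction to $A - A$ containing a Bohr set. Absorbing this error into the Bohr set by slightly shrinking its radius, while simultaneously keeping the rank and radius dependent only on $s_1, s_2, r$ for part (i) and on $s, r$ for part (ii), is where most of the technical work will lie.
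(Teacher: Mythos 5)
There is a genuine gap, and it sits exactly where you locate ``most of the technical work'': the transfer from the compact group back to $\Z$. Furstenberg's correspondence principle records intersection patterns of \emph{translates} of $A$, which is why it works for the translation-invariant sumset in \cref{th:main-density}; but $s_1A+s_2A-s_2A$ is not translation invariant. The natural parametrization of solutions (in the notation of the paper, $\phi_2(v)\in A$, $u+w\in A$, $u+\phi_1(v)\in A$, see \eqref{eq:parametrize_partition}) contains a membership condition with no free translation parameter, so passing through a measure-preserving system and its Kronecker factor only controls the sumset up to a set of zero upper Banach density -- this is precisely F{\o}lner's theorem for $A-A$. Your plan to ``absorb'' that exceptional set by shrinking the radius cannot be repaired even in principle: combining F{\o}lner ($A-A\supset B\setminus E$ with $d^*(E)=0$) with Kriz's example ($A-A$ containing no Bohr set) shows that a Bohr set minus a zero-density set need not contain \emph{any} Bohr set, so no shrinking of rank or radius recovers the containment. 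This is why the paper proves the generalization \cref{th:main-partition} with a different correspondence principle (\cref{prop:correspondence_principle_bohr}): one fixes invariant means, decomposes $1_{A}*_\lambda 1_{-A}$ into an almost periodic part plus a null function, and uses the third summand to convolve once more against $1_{A}$, which annihilates the null part and yields the \emph{exact} containment $A+B-B\supset \tau^{-1}\bigl(\supp(\rho_{A}^{\nu}*\rho_{B}^{\lambda}*\rho_{-B}^{\lambda})\bigr)$ on the Bohr compactification, with no exceptional set at all; the dilates are then handled via the Radon--Nikodym densities of \cref{sec:rn}.

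Even granting the transfer, your compact-group step is too weak. Once Steinhaus gives a neighborhood $U$ of $0$ inside $s_2A_i^{\#}-s_2A_i^{\#}$ (note $s_2K$ is open in $K$, so $U$ is already a neighborhood of $0$ in $K$), concluding that $s_1A_i^{\#}+s_2A_i^{\#}-s_2A_i^{\#}$ is a neighborhood of $0$ requires a point $a\in A_i^{\#}$ with $s_1a$ lying in the interior of $-U$ -- a specific small neighborhood of $0$ whose size depends on the cell -- not merely a point of $s_1A_i^{\#}$ in every coset of $s_2K$, which is all your coset refinement provides. Arranging that a single cell plays both roles simultaneously (supplying the difference-set neighborhood and returning, under $x\mapsto s_1x$, into that very neighborhood) is the real combinatorial difficulty; it is what the Hales--Jewett-based argument behind \cref{th:ll_compact}(ii) (quoted here as \cref{prop:partition}) accomplishes, and a single pigeonhole on densities plus congruence refinements does not. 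Your diagonalization sketch for part (ii) inherits both of these problems.
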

Rado's theorem says that an equation $\sum_{j=1}^k s_j x_j = 0$ with coefficients $s_j \in \Z\setminus \{0\}$ is partition regular over $\Z \setminus \{0\}$ if and only if there exists $J \subset \{1, \ldots, k\}, J \neq \varnothing$ such that $\sum_{j \in J}s_j = 0$.
Combined with \cref{th:br}, part (i) of Theorem \ref{th:ll} gives a complete characterization of tuples $(s_1, \ldots, s_k) \in (\Z \setminus \{0\})^k$ that guarantee the existence of a Bohr set in $\sum_{j=1}^k s_j A_i$, for some $i$, as long as $k \geq 3$: They are precisely tuples satisfying Rado's condition.\footnote{To see that this condition is necessary, suppose $\sum_{j=1}^k s_j A_i$ contains a Bohr set. By giving $0$ its own partition class, we may assume $0 \not \in A_i$. Since a Bohr set must necessarily contain $0$, this implies that there are $x_j \in A_i$ such that $\sum_{j=1}^k s_j x_j =0$, and Rado's condition applies. To see that this condition is sufficient, observe that $(s+t)A \subset sA + tA$, so the case $k \geq 3$ can be reduced to the case $k=3$.}
 This characterization is a strengthening of Rado's theorem. As the integer $s$ in Part (ii) can be arbitrarily large, this suggests that either the answer to \cref{q:kr} is positive, or the construction of a counterexample must be very delicate. 

\subsection{Previous results in compact groups}\label{sec:PreviousCompact}
As part of a general program, we aim to study the Bohr sets in sumsets phenomenon in more general groups. A natural setup is \textit{amenable groups}, since in these groups there is a natural notion of density, and Bohr sets can also be defined.\footnote{For non-abelian groups $G$, Bohr sets can be defined in terms of finite-dimensional unitary irreducible representations of $G$, see \cite{bg}.} A locally compact group $G$ with left Haar measure $m_G$ is said to be amenable if there exists an \textit{invariant mean} on $G$, that is, a linear functional $\lambda$ on $L^\infty(m_G)$ that is nonnegative (i.e.~$\lambda(f) \geq 0$ if $f \geq 0$), of norm 1 (i.e.~$\lambda(1_G)=1$) and left-invariant (i.e.~$\lambda(f_t) = \lambda(f)$, where $f_t(x) = f(t^{-1}x))$. If $A \subset G$ is a Borel set, we can define its \textit{upper Banach density} as
\begin{equation} \label{eq:density2}
    d^*(A) = \sup\{ \lambda(1_A): \textup{ $\lambda$ is an invariant mean on $G$.}\}
\end{equation}
The supremum is actually a maximum, since the set of invariant means on $G$ is weak*-compact, by the Banach-Alaoglu theorem. It is well known that all locally compact abelian groups are amenable. F\o lner \cite{folner1, folner2} generalized \cref{th:bogo} to discrete abelian groups, and the results of \cite{bg} mentioned above apply to countable discrete amenable groups which are not necessarily abelian.

Against this backdrop, our objective in this program is threefold. First, we ask for analogues of Theorems \ref{th:br} and \ref{th:ll} in (a subclass of) amenable groups. Second, in the context of general groups, we can replace the dilate $sA$ by $\phi(A)$, the image of $A$ under a homomorphism $\phi$. This point of view leads to a wider range of applications: we can consider linear maps on vector spaces and multiplication by an element in a ring (see Corollary \ref{cor:numberfields} below). This broader perspective was also adopted in recent works \cite{abb, abo} on Khintchine-type recurrence for actions of an abelian group. Third, we aim for \textit{uniformity} in terms of rank and radius of the Bohr set in question, i.e., they are allowed to depend on $d^*(A)$ and other parameters, but not $A$ itself. This is because, in some situations, the existence of Bohr sets is straightforward (for example, an interval around 0 in $\R/\Z$ always contains a Bohr set), but obtaining uniformity is much harder.

In \cite{ll}, these objectives were achieved for compact abelian groups. Note that in this case, the only invariant mean on $G$ is given by $m_G$ (the normalized Haar measure on $G$) and $d^*(A) = m_G(A)$. The second and third authors proved the following.

\begin{Oldtheorem}[Le-L\^e \cite{ll}] \label{th:ll_compact}
Let $K$ be a compact abelian group with normalized Haar measure $m_K$. Let $\phi_1, \phi_2, \phi_3: K \to K$ be commuting continuous endomorphisms such that $[K: \phi_j(K)] < \infty$ for each $j$. 
\begin{enumerate}[label=(\roman*)]
    \item If $\phi_1 + \phi_2 +\phi_3=0$ and $A \subset K$ is a Borel set with $m_K(A) > 0$, then $\phi_1(A) + \phi_2(A) + \phi_3(A)$ contains a Bohr-$(k,\eta)$ set, where $k$ and $\eta$ depend only on $m_K(A)$ and $[G: \phi_j(G)]$.
    \item If $K = \bigcup_{i=1}^r A_i$ is a partition of $K$ into Borel sets, then there exists $i$ such that $\phi_1(A_i) + \phi_2(A_i)-\phi_2(A_i)$ contains a Bohr-$(k,\eta)$ set, where $k$ and $\eta$ depend only on $r$ and $[G: \phi_j(G)]$.
\end{enumerate}
\end{Oldtheorem}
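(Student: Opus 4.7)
My plan is to do Fourier analysis on $K$ with its discrete dual $\widehat{K}$, writing $\widehat{\phi}_j:\widehat{K}\to\widehat{K}$ for the dual endomorphism, which is $n_j$-to-one onto its image where $n_j := [K:\phi_j(K)]$. Throughout, set $\alpha:=m_K(A)$.

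For Part (i), I will take $\widetilde{\mu}_j := (\phi_j)_*(1_A\,m_K)$, a finite positive measure of mass $\alpha$, and form $F := \widetilde{\mu}_1 * \widetilde{\mu}_2 * \widetilde{\mu}_3$. Then $F$ is a positive measure supported in $\phi_1(A)+\phi_2(A)+\phi_3(A)$ with Fourier coefficients
\[
\hat{F}(\chi) = \hat{1}_A(\widehat{\phi}_1\chi)\,\hat{1}_A(\widehat{\phi}_2\chi)\,\hat{1}_A(\widehat{\phi}_3\chi).
\]
Parseval combined with the kernel-size bound $\sum_\chi|\hat{1}_A(\widehat{\phi}_j\chi)|^2\leq n_j\alpha$ and two applications of Cauchy--Schwarz give $\hat{F}\in\ell^1(\widehat{K})$, so $F$ is continuous and the sumset contains the open set $\{F>0\}$; the task is reduced to showing $F>0$ on a Bohr neighborhood of $0$.

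This reduction is handled by a Bogolyubov-style decomposition. Fix a threshold $\delta>0$, set $\Lambda:=\{\chi : |\hat{1}_A(\widehat{\phi}_j\chi)|\geq\delta \text{ for } j=1,2,3\}$, a finite subset of $\widehat{K}$ whose size is controlled by $n_1n_2n_3$ and $\delta$, and split $F=F_\Lambda+F_{\mathrm{err}}$. The Cauchy--Schwarz/Parseval estimates above bound $\|F_{\mathrm{err}}\|_\infty$ by a constant (depending on $\alpha$ and the $n_j$) times $\delta$, while on $B(\Lambda;\eta)$ one has $|F_\Lambda(x)-F_\Lambda(0)|\leq\eta\sum_{\chi\in\Lambda}|\hat{F}(\chi)|$, similarly bounded by a constant times $\eta$. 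Hence $F(x)>0$ on $B(\Lambda;\eta)$ as soon as $F(0)\geq c(\alpha,n_1,n_2,n_3)>0$ and $\delta,\eta$ are chosen sufficiently small.

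Producing this lower bound on $F(0)$ is the main obstacle and is where the hypothesis $\phi_1+\phi_2+\phi_3=0$ is used. Dualizing to $\widehat{\phi}_1+\widehat{\phi}_2+\widehat{\phi}_3=0$, the formula $F(0)=\sum_\chi\hat{F}(\chi)$ weighted-counts triples $(a_1,a_2,a_3)\in A^3$ satisfying $\sum_j\phi_j a_j=0$, so the situation is of Roth type. The triangle inequality together with Cauchy--Schwarz gives
\[
F(0)\geq \alpha^3 - \alpha\sqrt{n_1n_2}\,\sup_{\chi\neq 0}|\hat{1}_A(\widehat{\phi}_3\chi)|,
\]
and a standard dichotomy/density-increment argument concludes: either the supremum is at most $\alpha^2/(2\sqrt{n_1n_2})$ and $F(0)\geq\alpha^3/2$, or a character $\chi_0\neq 0$ witnesses a large Fourier coefficient of $1_A$, in which case I pass to a Bohr sub-level set on which the density of $A$ increases by a multiplicative factor bounded below in terms of $\alpha$ and iterate. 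Tracking the parameters through the $O_\alpha(1)$ iterations so that the final rank and radius depend only on $\alpha$ and the $n_j$ is the delicate technical point.

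For Part (ii), pigeonhole yields some $A_i$ with $m_K(A_i)\geq 1/r$, and I rerun the same machinery with the triple $(\phi_1,\phi_2,-\phi_2)$ in place of $(\phi_1,\phi_2,\phi_3)$ and $A$ replaced by $A_i$. The Fourier coefficients are now $\hat{1}_{A_i}(\widehat{\phi}_1\chi)\,|\hat{1}_{A_i}(\widehat{\phi}_2\chi)|^2$, whose non-negative square factor streamlines the Bogolyubov step. Since $\phi_1+\phi_2-\phi_2=\phi_1\neq 0$, the diagonal-in-kernel trick of Part (i) is unavailable, but the analogous $F_i$ already has $\hat{F}_i(0)=m_K(A_i)^3\geq r^{-3}$, and the Roth/density-increment argument from Part (i) applies verbatim to the single dense set $A_i$, producing a Bohr-$(k,\eta)$ set with parameters depending only on $r$ and the $n_j$.
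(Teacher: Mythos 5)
This theorem is quoted in the present paper from Part I of the series \cite{ll} and used as a black box (via \cref{prop:partition} and [ll, Proposition 4.3]), so there is no in-paper proof to compare against; judged on its own terms, your proposal has a fatal gap in Part (ii) and a large missing piece in Part (i). For (ii), reducing by pigeonhole to a single cell $A_i$ with $m_K(A_i)\geq 1/r$ and then ``applying Part (i) verbatim'' cannot work, because the single-set statement is false: take $K=\mathbb{T}$, $\phi_1=\phi_2=\mathrm{id}$ (surjective, so no index issues) and $A=[0.4,0.45]$. Then $\phi_1(A)+\phi_2(A)-\phi_2(A)=A+A-A\subset[0.35,0.5]$, which misses a neighborhood of $0$ and hence contains no Bohr-$(k,\eta)$ set, although $m_K(A)>0$. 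Note also that the quantity you lower bound, $\hat{F}_i(0)=m_K(A_i)^3$, is only the trivial Fourier coefficient, not the solution count $F_i(0)$; in this example $F_i(0)=0$. Since $\phi_1+\phi_2-\phi_2=\phi_1\neq 0$, the sumset is not translation invariant, so neither the translation trick nor the density increment from (i) transfers. The partition hypothesis must enter in an essential way: the actual argument of \cite{ll} (the form quoted here as \cref{prop:partition}) works with all densities $\rho_{A_1},\dots,\rho_{A_r}$ simultaneously and rests on a Hales--Jewett-type result, which is exactly why the bounds in (ii) are reported as far worse than in (i).

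For Part (i), two problems. First, your dichotomy is misstated when $\phi_3$ is not surjective: the supremum $\sup_{\chi\neq 0}\bigl|\hat{1}_A(\widehat{\phi}_3\chi)\bigr|$ includes the $n_3-1$ nontrivial characters $\chi$ annihilating $\phi_3(K)$, for which $\widehat{\phi}_3\chi=0$ and the coefficient equals $\alpha$, so the ``large coefficient'' branch gives no density increment. This is repairable: for such $\chi$ one has $\widehat{\phi}_2\chi=-\widehat{\phi}_1\chi$ (dualizing $\phi_1+\phi_2+\phi_3=0$), so those terms equal $\alpha\bigl|\hat{1}_A(\widehat{\phi}_1\chi)\bigr|^2\geq 0$ and may be discarded, after which the supremum runs only over genuinely nontrivial frequencies of $K$ -- but as written the step is wrong. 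Second, and more seriously, ``pass to a Bohr sub-level set and iterate'' is not yet an argument in a general compact abelian group: after one increment the ambient set is a Bohr set, not a subgroup, so rerunning the Fourier analysis requires Bourgain-style regular Bohr sets with localized Fourier analysis (and one must also track how the $\phi_j$ interact with the localizing Bohr sets), or else an energy-increment/regularity decomposition, which is the route taken in \cite{ll} and the reason this paper reports tower-type bounds for \cref{th:main-density}. That localization is the heart of the proof and is absent from your sketch, so Part (i) is an outline of a plausible alternative (density-increment) strategy whose hardest step is unaddressed, while the reduction of the Bohr-set conclusion to the lower bound on $F(0)$ (the large-spectrum decomposition $F=F_\Lambda+F_{\mathrm{err}}$) is correct and standard.
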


The finite index condition is necessary and also appears in \cite{abb}. On the other hand, we do not know if the assumption that the $\phi_j$ commute can be omitted.

\subsection{New results in discrete groups}\label{sec:NewResults} In this paper we extend many of the preceding results to the setting of countable discrete abelian groups. Our main results are discrete analogues of \cref{th:ll_compact}, and as such are direct generalizations of Theorems \ref{th:br} and \ref{th:ll}.

\begin{theorem} \label{th:main-density}
Let $G$ be a countable discrete abelian group. Let $\phi_1, \phi_2, \phi_3 : G \rightarrow G$ be commuting endomorphisms such that $\phi_1 + \phi_2 +\phi_3 =0$ and $[G:\phi_j(G)]$ are finite for $j \in \{1, 2, 3\}$.
Suppose $A \subset G$ has positive upper Banach density, i.e.~$d^*(A) >0$. Then the set
\[
\phi_1(A) + \phi_2(A) + \phi_3(A)
\]
contains a Bohr-$(k, \eta)$ set, where $k$ and $\eta$ depend only on $d^*(A)$ and the indices $[G: \phi_j(G)]$.
\end{theorem}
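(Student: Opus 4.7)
The plan is to combine the Furstenberg correspondence principle with analysis on the Kronecker factor, reducing matters to the compact case \cref{th:ll_compact}(i). I would first apply the Furstenberg correspondence principle for countable abelian groups to obtain an ergodic measure-preserving $G$-system $(X, \mathcal{B}, \mu, (T_g)_{g \in G})$ and a set $E \subset X$ with $\mu(E) = d^*(A)$ satisfying: $\mu(E \cap T_u^{-1} E \cap T_v^{-1} E) > 0$ implies $A \cap (A - u) \cap (A - v) \neq \varnothing$. Using $\phi_3 = -(\phi_1 + \phi_2)$, the identity $\phi_1(u) + \phi_2(v) = \phi_1(c + u) + \phi_2(c + v) + \phi_3(c)$ shows that $\phi_1(u) + \phi_2(v) \in \phi_1(A) + \phi_2(A) + \phi_3(A)$ whenever $c, c+u, c+v \in A$. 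Writing $\Phi(u, v) := \phi_1(u) + \phi_2(v)$, it suffices to exhibit a Bohr-$(k,\eta)$ set $B \subset G$ contained in $\Phi(R)$, where $R := \{(u, v) \in G^2 : \mu(E \cap T_u^{-1} E \cap T_v^{-1} E) > 0\}$, with $k, \eta$ depending only on $d^*(A)$ and the indices.

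Next let $\pi: X \to K$ be the factor map onto the Kronecker factor $K$, a compact abelian group on which $G$ acts by rotation through a homomorphism $\tau: G \to K$ with dense image, and set $f := \E(1_E \mid K) \in L^\infty(K)$, so $0 \le f \le 1$ and $\int_K f \, dm_K = d^*(A)$. Since the Kronecker factor is characteristic for three-term correlations with independent shifts,
\[\lim_{N\to\infty}\frac{1}{|\Lambda_N|^2}\sum_{(u,v)\in\Lambda_N^2}\Big|\mu(E \cap T_u^{-1}E \cap T_v^{-1}E) - H(\tau(u),\tau(v))\Big| = 0,\]
where $H(y_1, y_2) := \int_K f(x) f(x + y_1) f(x + y_2) \, dm_K(x)$ is continuous on $K \times K$, and $(\Lambda_N)$ is any F\o lner sequence in $G$. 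A key technical lemma I would prove is that each $\phi_j$ extends to a continuous endomorphism $\bar{\phi}_j : K \to K$ with $\bar{\phi}_j \circ \tau = \tau \circ \phi_j$ and $[K:\bar{\phi}_j(K)] < \infty$, using the finite-index hypothesis and standard properties of the Bohr compactification; these extensions then inherit commutativity and $\bar{\phi}_1 + \bar{\phi}_2 + \bar{\phi}_3 = 0$. Applying \cref{th:ll_compact}(i) on $K$ to the super-level set $\{f > \delta\}$, with $\delta$ chosen in terms of $d^*(A)$, produces a Bohr-$(k,\eta)$ set $B_K \subset K$ whose parameters depend only on $d^*(A)$ and the indices; pulling back characters gives $B := \tau^{-1}(B_K)$, a Bohr set in $G$ of the same rank and radius.

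The principal obstacle is bridging the gap from the averaged approximation above to the pointwise statement needed to conclude $B \subset \Phi(R)$. I would handle this by the following transfer argument: for each $g \in B$, a suitable probability measure $\sigma_g$ on the fibre $\Phi^{-1}(g) \subset G^2$ should satisfy $\int H(\tau(u), \tau(v)) \, d\sigma_g(u,v) \ge \kappa$ with $\kappa > 0$ depending only on $d^*(A)$ and the indices --- by continuity of $H$ near $0$ combined with the output of the compact case --- while the averaged error arising from the displayed limit can be made arbitrarily small as $\sigma_g$ ranges over admissible measures. A pigeonhole argument then selects some $(u,v) \in \Phi^{-1}(g)$ with $\mu(E \cap T_u^{-1}E \cap T_v^{-1}E) > 0$, completing the proof. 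Achieving uniformity in $g$, so that the rank and radius of $B$ depend only on $d^*(A)$ and the indices, is the main technical challenge.
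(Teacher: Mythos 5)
Your overall strategy (Furstenberg correspondence, then reduction to the Kronecker factor and the compact-group result) is the right one, but there is a genuine gap at exactly the point the paper identifies as the main difficulty. Your ``key technical lemma'' --- that each $\phi_j$ extends to a continuous endomorphism $\bar{\phi}_j$ of the Kronecker group $K$ of the system produced by the correspondence principle, with $\bar{\phi}_j\circ\tau=\tau\circ\phi_j$ --- is false in general. The Kronecker factor is the dual of the eigenvalue group $\calE(\bX)\subset\widehat{G}$ of the particular system $\bX$, and an extension $\bar{\phi}_j$ with the stated intertwining property exists essentially if and only if $\calE(\bX)$ is closed under $\chi\mapsto\chi\circ\phi_j$ (cf.\ \cref{lem:Embeddings}); neither the finite-index hypothesis nor any property of the Bohr compactification forces this closure, since $\calE(\bX)$ is an arbitrary countable subgroup of $\widehat{G}$ handed to you by the correspondence principle. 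The paper's proof exists precisely to get around this: in \cref{prop:reduce_to_integral} one enlarges $\calE(\bX)$ to the smallest $\phi_j^*$-invariant subgroup $\Lambda\subset\widehat{G}$, forms the group rotation on $\widehat{\Lambda}$, passes to an ergodic joining $\bY$ of $\bX$ with this rotation, and notes that the enlarged rotation factor of $\bY$ is still characteristic for the relevant averages because it lies above the Kronecker factor of $\bX$. Without some such extension step your reduction to \cref{th:ll_compact}(i) on $K$ does not get off the ground.

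The second gap is the ``transfer argument'' you defer to the end: producing, for every $g$ in the pulled-back Bohr set, a pair $(u,v)\in\Phi^{-1}(g)$ with $\mu(E\cap T_u^{-1}E\cap T_v^{-1}E)>0$, uniformly in $g$. You describe fibre measures $\sigma_g$ and a pigeonhole but do not construct them, and this is not a routine detail --- it is the other half of the proof. The paper's route avoids the problem by building the averaging into a single function of one variable, $I(w)=UC\text{-}\lim_{g}\int f\cdot T_{\phi_3(g)}f\cdot T_{w-\phi_2(g)}f\,d\mu$, so that $I(w)>0$ directly yields a witness $g$ and hence $\phi_3(w)\in\phi_1(A)+\phi_2(A)+\phi_3(A)$ (\cref{lem:correspondence_funny}, using commutativity of $\phi_2,\phi_3$); the characteristic-factor identity plus unique ergodicity of the rotation (well-distribution of $(\tau(g))$) turns $I$ into an explicit double integral over the compact group, whose support contains a Bohr set by the compact-group input, and finally \cref{lem:phiB_Bohr} is needed to see that $\phi_3(\supp I)$ is still a Bohr set --- a step absent from your outline. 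If you reorganize your argument around such a one-variable correlation function and add the spectrum-enlarging extension, your plan becomes the paper's proof; as written, both the extension lemma and the pointwise transfer are unproven and the first is actually false as stated.
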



\begin{remark}\
\begin{itemize}
\item In the special case $\phi_j(x) = s_j x$ where $s_j \in \Z \setminus\{0\}$, \cref{th:main-density} was proven by the first author \cite{griesmer-br} without the conclusion on the uniformity of $k$ and $\eta$.

\item The conclusion of \cref{th:main-density} remains valid if the $\phi_j$ do not necessarily commute, but one of them is an automorphism. 
Indeed, assume that $\phi_1$ is an automorphism. We observe that
\[
\phi_1(A) + \phi_2(A) + \phi_3(A) = \phi_1 \left(A + \phi_1^{-1} \circ \phi_2 (A) + \phi_1^{-1} \circ \phi_3 (A) \right).
\]
Consider the endomorphisms $Id$, $\phi_1^{-1} \circ \phi_2$ and $\phi_1^{-1} \circ \phi_3$. They add up to $0$ since
\[
    Id + \phi_1^{-1} \circ \phi_2 + \phi_1^{-1} \circ \phi_3 = Id + \phi_1^{-1} \circ (\phi_2 + \phi_3) = Id + \phi_1^{-1} \circ (- \phi_1) = 0.
\]
They also commute\footnote{Whenever three endomorphisms sum to $0$ and two of them commute, all three must commute.  Since $Id$ commutes with every endomorphism, these three commute.}, and have finite index images. 
\cref{th:main-density} implies $A + \phi_1^{-1} \circ \phi_2 (A) + \phi_1^{-1} \circ \phi_3 (A)$ contains a Bohr set, and the image of a Bohr set under an automorphism is easily seen to be a Bohr set of the same rank and radius (see \cref{lem:from_kronecker_up}).

\item The hypothesis $\phi_1 + \phi_2 + \phi_3 = 0$ cannot be removed as demonstrated in the remark after \cref{th:br}.

\item Similarly, the condition that each index $[G:\phi_j(G)]$ is finite cannot be omitted. For example, take $G = \Z$, $\phi_1(x) = x$, $\phi_2(x) = -x$, and $\phi_3(x)=0$ for $x \in \Z$. Then $\phi_1(A) + \phi_2(A) + \phi_3(A)= A-A$, and the Kriz example \cite{kriz} shows that there exists a set $A$ of positive upper Banach density such that $A-A$ does not contain any Bohr set.  See \cite[Remark 1.6]{griesmer-br} for further discussion.
\end{itemize}
\end{remark}


\begin{theorem} \label{th:main-partition}
Let $G$ be a discrete abelian group and let $\phi_1, \phi_2: G \rightarrow G$ be commuting endomorphisms such that $[G:\phi_j(G)]$ is finite for $j \in  \{1, 2\}$.
Then for every finite partition $G = \bigcup_{i=1}^r A_i$, there exists $i \in \{1, \ldots, r\}$ such that
\[
\phi_1(A_i) + \phi_2(A_i) - \phi_2(A_i)
\]
contains a Bohr-$(k, \eta)$ set, where $k$ and $\eta$ depend only on $r$ and the indices $[G: \phi_j(G)]$.
\end{theorem}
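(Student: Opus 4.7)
The plan is to reduce to the compact case \cref{th:ll_compact}(ii) via a Furstenberg-type correspondence principle for partitions followed by passage to the Kronecker factor. Given the partition $G = \bigsqcup_{i=1}^r A_i$, I first apply the correspondence principle to produce a measure-preserving $G$-system $(X, T, \mu)$ together with a clopen partition $X = \bigsqcup_{i=1}^r X_i$, chosen so that positivity of a measure-theoretic intersection $\mu(X_i \cap T^{-g_1} X_i \cap \cdots \cap T^{-g_k} X_i) > 0$ implies the existence of the corresponding configuration in $A_i$.

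Next, pass to the Kronecker factor $\pi \colon (X, T, \mu) \to (K, R, m_K)$, where $K$ is a compact abelian group carrying a rotation action of $G$ via a homomorphism $\rho \colon G \to K$. Define $f_i \colon K \to [0,1]$ by $f_i := \E(1_{X_i} \mid K)$, so $f_i \geq 0$, $\sum_i f_i = 1$, and $\int_K f_i \, dm_K = \mu(X_i)$. A preparatory step is to arrange that $\phi_1, \phi_2$ descend to commuting continuous endomorphisms $\tilde\phi_1, \tilde\phi_2$ of $K$ with image indices bounded in terms of $[G:\phi_j(G)]$; this may require enlarging the underlying system so that its Kronecker factor is stable under the $\phi_j$-twists of the $G$-action. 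I would then convert the soft partition $\{f_i\}$ into a hard Borel partition $K = \bigsqcup_{i=1}^r K_i$ by an argmax rule $K_i := \{x \in K : f_i(x) \geq f_j(x) \text{ for all } j\}$ (with consistent tie-breaking), which guarantees $f_i \geq 1/r$ pointwise on $K_i$.

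Applying \cref{th:ll_compact}(ii) to $K = \bigsqcup_i K_i$ with $\tilde\phi_1, \tilde\phi_2$ yields an index $i^*$ and a Bohr-$(k,\eta)$ set $B \subset K$ with
\[
B \subset \tilde\phi_1(K_{i^*}) + \tilde\phi_2(K_{i^*}) - \tilde\phi_2(K_{i^*}),
\]
where $k,\eta$ depend only on $r$ and $[G:\phi_j(G)]$. Pulling $B$ back along $\rho$ produces a Bohr-$(k,\eta)$ set $B' := \rho^{-1}(B) \subset G$ of the same parameters, reducing the theorem to the combinatorial inclusion
\[
B' \subset \phi_1(A_{i^*}) + \phi_2(A_{i^*}) - \phi_2(A_{i^*}).
\]

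The main difficulty is this lift-back step. For $g \in B'$, the compact inclusion merely supplies $k_1, k_2, k_3 \in K_{i^*}$ with $\tilde\phi_1(k_1) + \tilde\phi_2(k_2) - \tilde\phi_2(k_3) = \rho(g)$, and these points need not lie in $\rho(G)$, so they do not directly produce elements of $A_{i^*}$. To overcome this, I would upgrade the inclusion to a quantitative statement: the Fourier-analytic proof of \cref{th:ll_compact}(ii) in fact yields positivity of the triple convolution $(1_{\tilde\phi_1(K_{i^*})} * 1_{\tilde\phi_2(K_{i^*})} * 1_{-\tilde\phi_2(K_{i^*})})(\kappa) > 0$ for $\kappa \in B$. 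Combined with the pointwise bound $f_{i^*} \geq (1/r) \, 1_{K_{i^*}}$ and the fact that the Kronecker factor is characteristic for the relevant length-three configuration $\phi_1(\alpha) + \phi_2(\beta - \gamma)$, this positivity transfers to the corresponding three-point correlation involving $1_{X_{i^*}}$ on $X$, from which the correspondence principle extracts honest $\alpha, \beta, \gamma \in A_{i^*}$ satisfying $\phi_1(\alpha) + \phi_2(\beta) - \phi_2(\gamma) = g$. The bulk of the technical work lies in justifying this convolution-to-correlation transfer and in arranging the $\phi_j$-compatibility of the Kronecker factor with explicit bounds.
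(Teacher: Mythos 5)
Your reduction to the compact case is the right instinct, but the correspondence principle you invoke cannot deliver the final step, and the gap is structural rather than technical. The sumset $\phi_1(A_i)+\phi_2(A_i)-\phi_2(A_i)$ is not translation invariant: the parametrized solution one must realize is $\phi_2(v)\in A_i$, $u+w\in A_i$, $u+\phi_1(v)\in A_i$, in which the first membership is \emph{anchored} (it contains no $u$). Furstenberg-type correspondence, whether in your clopen-partition form or via characteristic factors, only controls quantities of the shape $\mu(X_i\cap T_{g_1}^{-1}X_i\cap T_{g_2}^{-1}X_i)$, i.e.\ densities of configurations $a,\,a+g_1,\,a+g_2$ all sharing a common base point $a$. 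Your proposed ``convolution-to-correlation transfer'' would have to convert positivity of $\bigl(1_{\tilde\phi_1(K_{i^*})}*1_{\tilde\phi_2(K_{i^*})}*1_{-\tilde\phi_2(K_{i^*})}\bigr)(\kappa)>0$ into a statement about $1_{X_{i^*}}$ evaluated at the anchored point $\phi_2(v)$ — pointwise membership information about $A_{i^*}$ that the measure-theoretic correspondence simply does not encode (and the Kronecker factor is characteristic only for integrated multi-correlations, not for expressions pinned at a distinguished point of the orbit). A further symptom of the same problem: the hard partition $K=\bigsqcup K_i$ you build by the argmax rule has no direct relation to the sets $A_i$; knowing $\rho(g)\in \tilde\phi_1(K_{i^*})+\tilde\phi_2(K_{i^*})-\tilde\phi_2(K_{i^*})$ says nothing about membership of $g$ in $\phi_1(A_{i^*})+\phi_2(A_{i^*})-\phi_2(A_{i^*})$, and the missing bridge is exactly the transfer you defer. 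Finally, your route forces $G$ to be countable (factors/Kronecker theory), whereas the theorem as stated has no countability hypothesis.

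This obstruction is why the paper abandons Furstenberg correspondence for this theorem and instead works on the Bohr compactification with invariant means: one fixes an extremal invariant mean $\lambda$, forms Radon--Nikodym densities $\rho_{A}^{\nu}$ on $bG$ (which, crucially, are supported on $\overline{\tau(A)}$, supplying precisely the anchored, membership-level control your transfer lacks), proves the exact identity $1_A*_\nu\bigl(1_B*_\lambda 1_{-B}\bigr)=\bigl(\rho_A^{\nu}*\rho_B^{\lambda}*\rho_{-B}^{\lambda}\bigr)\circ\tau$ via Bochner--Herglotz and Wiener's lemma (Proposition \ref{prop:correspondence_principle_bohr}), compares $\rho_{A}^{\nu}$ with $\rho_{\phi_j(A)}^{\phi_{j,*}\nu}\circ\tilde\phi_j$ (Corollary \ref{cor:right_containment}, together with Lemmas \ref{lem:existence_mean_pullback}--\ref{lem:H_G_nu_mu} to build the right means), and only then applies the compact-group input (Proposition \ref{prop:partition}). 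To repair your argument you would essentially have to rebuild this machinery; the ergodic-theoretic shortcut you propose does not close.
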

\begin{remark}\
\begin{itemize}
\item In contrast to Theorem \ref{th:main-density} and Theorem \ref{thm:B+C+A_i_discrete} below, \cref{th:main-partition} does not assume $G$ is countable. The reason is that the former two theorems use Kronecker factors via Furstenberg's correspondence principle, and the theory of factors requires the group to be countable. There are two ways to think of a factor of a measure preserving $G$-system: as a spatial map or as a $G$-invariant sub $\sigma$-algebra. The latter can be obtained trivially from the former, but the converse is not trivial, and requires the group to be countable (in addition to the $\sigma$-algebras being separable). For instance, the method of proof of Theorem 5.15 in \cite{Furstenberg-book} requires $G$ to be countable.

\item Since Bohr sets contain $0$, \cref{th:main-partition} implies that the equation $\phi_1(x) + \phi_2(y) - \phi_2(z) = 0$ is partition regular in discrete abelian groups, that is, under any partition $G = \bigcup_{i=1}^r A_i$, there exists non-zero $x, y, z$ in the same class $A_i$ such that $\phi_1(x) + \phi_2(y) - \phi_2(z) = 0$. (To see that we can take $x, y, z$ to be nonzero, give 0 its own partition class.)

\item If $d^*(A) >0$, then $A + A - A$ is not guaranteed to contain a Bohr set as remarked after \cref{th:br}. In particular, the analogous version of \cref{th:main-partition} for sets of positive upper Banach density is false. 

\item The hypothesis that $\phi_2(G)$ has finite index in $G$ cannot be omitted.  For example, taking $\phi_2 = 0$ and $\phi_1(x) = x$ for $x \in G$, the sumset in Theorem \ref{th:main-partition} simplifies to $A_i$.

The question of whether the Theorem \ref{th:main-partition} remains true without the assumption that $[G:\phi_1(G)]$ is finite is essentially \cref{q:kr}: we may take $\phi_1(x)=0$ and $\phi_2(x)=x$ for all $x\in G$, and the sumset in Theorem \ref{th:main-partition} simplifies to $A_i-A_i$.  

\item Similar to \cref{th:main-density}, the hypothesis that the $\phi_j$ commute can be removed if one of them is an automorphism. 

\end{itemize}
\end{remark}
As a consequence of Theorems \ref{th:main-density} and \ref{th:main-partition}, we obtain immediately the following number field generalization of Theorems \ref{th:br} and \ref{th:ll}. In \cite{ll}, this result was proved (at least for $\Z[i]$) using a different argument, similar to Bogolyubov and Bergelson-Ruzsa's proofs of Theorems \ref{th:bogo} and \ref{th:br} in $\Z$.

\begin{corollary} \label{cor:numberfields}
Let $K$ be an algebraic number field of degree $d$ and $\mathcal{O}_K$ be its ring of integers (so the additive group of $\mathcal{O}_K$ is isomorphic to $\Z^d$). Let $s_1, s_2, s_3 \in \mathcal{O}_K \setminus \{0\}$ such that $s_1+s_2+s_3=0$. 
\begin{enumerate}[label=(\roman*)]
    \item If $A \subset \mathcal{O}_K$ has $d^*(A) > 0$, then $s_1A + s_2A + s_3A$ contains a Bohr set, whose rank and radius depend only on $d^*(A)$ and the norms of $s_1, s_2, s_3$.
    \item If $\mathcal{O}_K = \bigcup_{i=1}^r A_i$, then there exists $i$ such that $s_1A_i + s_2A_i-s_2A_i$ contains a Bohr set, whose rank and radius depend only on $r$ and the norms of $s_1$ and $s_2$.
\end{enumerate}
\end{corollary}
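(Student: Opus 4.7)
The plan is to apply \cref{th:main-density} and \cref{th:main-partition} directly, taking $G = \mathcal{O}_K$ (a countable discrete abelian group, isomorphic to $\Z^d$) and defining the endomorphisms $\phi_j \colon \mathcal{O}_K \to \mathcal{O}_K$ by $\phi_j(x) = s_j x$. Since $\mathcal{O}_K$ is a commutative ring, the $\phi_j$ are pairwise commuting additive endomorphisms, and the hypothesis $s_1+s_2+s_3=0$ translates verbatim into $\phi_1+\phi_2+\phi_3=0$. So the hypotheses of \cref{th:main-density} line up for part (i), and those of \cref{th:main-partition} line up (using only $\phi_1,\phi_2$) for part (ii).

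The one arithmetic input I would need is the identity $[\mathcal{O}_K : s\mathcal{O}_K] = |N_{K/\Q}(s)|$ for every nonzero $s \in \mathcal{O}_K$. This is a classical fact from algebraic number theory: fixing a $\Z$-basis of $\mathcal{O}_K$, multiplication by $s$ is represented by an integer $d \times d$ matrix whose determinant is, up to sign, the field norm $N_{K/\Q}(s)$, and the index of the image of an integer matrix acting on $\Z^d$ equals the absolute value of its determinant. In particular each $[\mathcal{O}_K : s_j \mathcal{O}_K]$ is finite, and the quantitative dependence from the two main theorems, which is on $d^*(A)$ (resp.\ $r$) and on the indices $[\mathcal{O}_K:\phi_j(\mathcal{O}_K)]$, becomes precisely dependence on $d^*(A)$ (resp.\ $r$) and on the norms $|N(s_j)|$.

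With this dictionary in place, part (i) is immediate from \cref{th:main-density} and part (ii) from \cref{th:main-partition}. I do not anticipate any real obstacle here: all the analytic and ergodic content has already been packaged into the main theorems, and the only step specific to the number-field setting is the identification of the endomorphism index with the field norm.
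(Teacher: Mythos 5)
Your proposal is correct and is exactly the argument the paper intends: the corollary is deduced directly from Theorems \ref{th:main-density} and \ref{th:main-partition} by taking $G=\mathcal{O}_K$ and $\phi_j(x)=s_jx$, with the classical identity $[\mathcal{O}_K:s\mathcal{O}_K]=|N_{K/\Q}(s)|$ converting the finite-index hypothesis and the quantitative dependence into dependence on the norms.
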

    

Bergelson, Furstenberg, and Weiss \cite[Corollary 1.3]{bfw-bohr} showed that if $B, C \subset \Z$ have positive upper Banach density and $A \subset \Z$ is syndetic, then $B + C + A$ contains a translate of a Bohr set. Here a set $A \subset \Z$ is \emph{syndetic} if a collection of finitely many translates of $A$ covers $\Z$. Our next theorem not only generalizes Bergelson-Furstenberg-Weiss's result to countable abelian groups but also strengthens it by only assuming that $A$ arises from an arbitrary partition. Moreover, we provide quantitative bounds on the radius and rank of the Bohr set, a feature not presented in \cite{bfw-bohr}.

\begin{theorem}\label{thm:B+C+A_i_discrete}
Let $G$ be a countable discrete abelian group and let $B, C \subset G$ have positive upper Banach density. Then for any partition $G = \bigcup_{i=1}^r A_i$, there is an $i \in \{1, \ldots, r\}$ such that $B + C + A_i$ contains a Bohr-$(k, \eta)$ set where $k, \eta$ depend only on $d^*(B), d^*(C)$ and $r$. 
\end{theorem}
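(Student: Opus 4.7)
The plan is to mirror the Kronecker-factor strategy that drives the compact-group analogue (\cref{th:ll_compact}), combined with Furstenberg correspondence and a pigeonhole over the partition. First I would apply the Furstenberg correspondence jointly to $B$, $C$ and the classes $A_1,\ldots,A_r$, producing an ergodic measure-preserving $G$-system $(X,\mu,(T_g)_{g\in G})$ with a distinguished generic point $x_0$, lifts $\tilde B,\tilde C\subset X$ with $\mu(\tilde B)=d^*(B)$, $\mu(\tilde C)=d^*(C)$, and a measurable partition $X=\bigsqcup_{i=1}^r\tilde A_i$, arranged so that $\mathbf{1}_B(g)=\mathbf{1}_{\tilde B}(T_gx_0)$ and analogously for $C$ and each $A_i$. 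Let $\pi\colon X\to Z$ be the projection to the Kronecker factor ($Z$ a compact abelian group, $\tau\colon G\to Z$ the dense-image homomorphism intertwining $T_g$ with translation by $\tau(g)$), and set $\phi_B=\E[\mathbf{1}_{\tilde B}\mid\mathcal K]$, $\phi_C=\E[\mathbf{1}_{\tilde C}\mid\mathcal K]$, and $\phi_i=\E[\mathbf{1}_{\tilde A_i}\mid\mathcal K]$ for $i=1,\ldots,r$, viewed as $[0,1]$-valued functions on $Z$ satisfying $\int_Z\phi_B\,dm_Z=d^*(B)$, $\int_Z\phi_C\,dm_Z=d^*(C)$, and $\sum_{i=1}^r\phi_i\equiv 1$.

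Writing $z_0:=\pi(x_0)\in Z$, the pigeonhole is on the triple Haar convolution: since $\sum_i\phi_i\equiv 1$,
\[
\sum_{i=1}^r(\phi_B*\phi_C*\phi_i)(3z_0)=(\phi_B*\phi_C*1)(3z_0)=\int_Z\phi_B\,dm_Z\cdot\int_Z\phi_C\,dm_Z=d^*(B)\,d^*(C),
\]
so some $i_0$ satisfies $(\phi_B*\phi_C*\phi_{i_0})(3z_0)\geq d^*(B)d^*(C)/r$. Continuity of this convolution together with a quantitative Fourier truncation of $\phi_B$ and $\phi_C$---retaining only the characters $\gamma\in\hat Z$ with $|\widehat{\phi_B}(\gamma)|$ or $|\widehat{\phi_C}(\gamma)|$ above a threshold determined by $d^*(B),d^*(C),r$, a finite set by Parseval---then yields a Bohr-$(k,\eta)$ set $W_0\subset Z$ around $0$, with $k,\eta$ depending only on $d^*(B),d^*(C),r$, such that
\[
(\phi_B*\phi_C*\phi_{i_0})(3z_0+w)\geq\frac{d^*(B)\,d^*(C)}{2r}\qquad\text{for all }w\in W_0.
\]
Pulling back, $U:=\tau^{-1}(W_0)$ is a Bohr-$(k,\eta)$ set in $G$ containing $0$.

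Finally I would translate this Kronecker-level positivity into actual sumset membership by counting representations. For $v\in U$, let
\[
L_N(v):=\frac{1}{|\Phi_N|^2}\,\#\bigl\{(b,c)\in\Phi_N\times\Phi_N:b\in B,\ c\in C,\ v-b-c\in A_{i_0}\bigr\}
\]
for a F\o lner sequence $(\Phi_N)$ in $G$. Via the correspondence, $L_N(v)$ is a two-parameter ergodic average at $x_0$; a Bergelson--Host--Kra style analysis for the linear forms $(b,c,v-b-c)$ shows the Kronecker factor is characteristic, with $L^2$-limit equal to $(\phi_B*\phi_C*\phi_{i_0})(3z_0+\tau(v))$. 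Taking $x_0$ generic for such multi-parameter averages (for instance via Lindenstrauss' pointwise ergodic theorem along a tempered F\o lner subsequence, or by approximating $\phi_B,\phi_C$ by trigonometric polynomials on the Bohr spectrum so pointwise lower bounds on $U$ follow from uniform continuity), one obtains $\liminf_N L_N(v)>0$ for every $v\in U$. Hence each $v\in U$ admits a representation $v=b+c+a$ with $b\in B$, $c\in C$, $a\in A_{i_0}$, giving $U\subseteq B+C+A_{i_0}$.

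The hard part will be this last step: upgrading the $L^2$-level Kronecker bound to a uniform pointwise positive count for \emph{every} $v\in U$. Multi-parameter pointwise ergodic theorems on general amenable $G$ are delicate, and the Kriz--Griesmer examples show that $B+C$ alone may contain no translate of any Bohr set, so the partition is genuinely needed. The cleanest mechanism is almost-periodic Fourier approximation of $\phi_B,\phi_C$ with a number of characters bounded only in terms of $d^*(B),d^*(C),r$: this makes $\phi_B*\phi_C*\phi_{i_0}$ essentially a trigonometric polynomial whose pointwise behaviour on small Bohr sets is uniformly controlled, while the remaining high-frequency error in $L_N(v)$ can be absorbed by shrinking the Bohr radius. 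Tracking these approximation parameters yields the quantitative dependence of $k,\eta$ on $d^*(B),d^*(C),r$ asserted in the theorem.
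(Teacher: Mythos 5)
Your pigeonhole on $\phi_B*\phi_C*\phi_i(3z_0)$ and the Fourier truncation to extract a Bohr set are fine (and parallel what the paper's \cref{thm:B+C+A_i_compact} does), but the architecture has a genuine gap exactly where you flag it, and the work-arounds you sketch do not close it. The issue is that a Furstenberg-type correspondence only gives you control of the averages $L_N(v)$ in the $L^2(\mu)$ sense over the starting point; it does not make the specific orbit-closure point $x_0$ generic for the \emph{double} average $\frac{1}{|\Phi_N|^2}\sum_{b,c}1_{\tilde B}(T_bx)1_{\tilde C}(T_cx)1_{\tilde A_{i_0}}(T_{v-b-c}x)$. Lindenstrauss's theorem covers single averages $\frac{1}{|\Phi_N|}\sum_g f(T_gx)$, not two-parameter ones, and genericity for single averages does not propagate to the diagonal $G^2$-action on $X^3$ that your count $L_N(v)$ secretly lives on (the diagonal measure is not $G^2$-invariant). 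Approximating $\phi_B,\phi_C$ by trigonometric polynomials likewise only controls $L^2$ errors, whereas you need the count to be \emph{exactly} positive for \emph{every} $v$ in the Bohr set --- a single bad $v$ kills the inclusion $U\subset B+C+A_{i_0}$. This is precisely why the paper says (Remark \ref{remark:different_correspondence}) that Furstenberg's principle, adequate for the translation-invariant sumset $\phi_1(A)+\phi_2(A)+\phi_3(A)$, does not apply to $B+C+A_i$, and introduces a third correspondence principle (\cref{prop:B+C-A}) built from the piecewise Bohr structure of $B+C$.

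The mechanism the paper uses that is missing from your proposal is the finitely-embeddable relation $\prec$: Lemma \ref{lem:PWBohrSumsets}, drawing on the small-sum-pairs machinery of \cite{griesmer-small-sum-pairs}, produces a compact group $K$ and $h_B,h_C$ so that the level-set $\{g: h_B*h_C(\tau(g))>0\}$ is finitely embeddable in $B+C$ --- an exact combinatorial statement, not an averaged one. Lemma \ref{lem:PrecConvolution} then converts $\prec$ into a pointwise inequality $1_{B+C}*_\lambda q \geq h'\circ\tau *_\lambda q$ for a suitable invariant mean $\lambda$ and translate $h'$, and Lemma \ref{lem:D-A} rewrites $h'\circ\tau *_\lambda 1_{A}$ as $(h'*\rho_A^\lambda)\circ\tau$. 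This chain produces the containment $B+C+A\supset \tau^{-1}(\supp(h_B*h_C*\rho_A^\lambda))$ rigorously, at which point the compact-group pigeonhole (\cref{thm:B+C+A_i_compact}, using $\sum_i\rho_{A_i}^\lambda=1$ a.e.\ from \cref{cor:sum_rho_1}) finishes. Your scheme has no counterpart to $\prec$ or to the Lemma \ref{lem:PrecConvolution} inequality, which is where the pointwise content actually enters.

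A smaller issue: your ``joint Furstenberg correspondence'' for $B$, $C$, and the $A_i$ cannot in general give $\mu(\tilde B)=d^*(B)$ and $\mu(\tilde C)=d^*(C)$ simultaneously, since these suprema may be realized by incompatible invariant means. You can salvage this by using $\nu=\tfrac12(\nu_B+\nu_C)$ to get $\mu(\tilde B)\geq\tfrac12 d^*(B)$ and $\mu(\tilde C)\geq\tfrac12 d^*(C)$ at the cost of constants, but notice that the paper deliberately handles $B$ and $C$ via separate systems (the remark after Lemma \ref{lem:PWBohrSumsets}: $K$ is the common Kronecker factor), and handles the partition classes through Radon--Nikodym densities $\rho_{A_i}^\lambda$ taken against a mean $\lambda$ that comes from $B,C$ --- a mean that may well give $\lambda(A_i)=0$ for some individual $i$, which is exactly why the partition hypothesis (guaranteeing $\sum_i\rho_{A_i}^\lambda=1$) is essential.
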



We deduce Theorems \ref{th:main-density}, \ref{th:main-partition} and \ref{thm:B+C+A_i_discrete} from their counterparts for compact abelian groups (i.e.~Theorems \ref{th:ll_compact} and \ref{thm:B+C+A_i_compact}). However, the latter can be used as black boxes and the reader does not need to know their inner workings. The heavy lifting of this paper is done by \textit{correspondence principles}, which state that sumsets in discrete abelian groups can be modeled by sumsets in compact abelian groups. This strategy dates back at least to Furstenberg's correspondence principle \cite{Furstenberg}, used in his proof of Szemer\'edi's theorem. However, to accommodate the three different kinds of sumsets in our results, we need three different correspondence principles.  These are \cref{lem:correspondence_funny}, \cref{prop:correspondence_principle_bohr}, and \cref{prop:B+C-A}.

Our bounds for $k$ and $\eta$ in Theorems \ref{th:main-density}, \ref{th:main-partition} and \ref{thm:B+C+A_i_discrete} are transferred from and have the same quality as their compact analogues. Since the proof of \cref{th:ll_compact} (i) relies on a regularity lemma, the bounds in \cref{th:main-density} are of tower type. The proof of \cref{th:ll_compact}(ii) relies on the Hales-Jewett theorem, so the bounds in \cref{th:main-partition} are extremely poor (albeit still primitive recursive). As for \cref{thm:B+C+A_i_discrete}, we get more appealing bounds of the form $\eta = \Omega( d^*(B) d^*(C) r^{-1})$ and $k = O( d^*(B)^{-2} d^*(C)^{-2} r^2)$, though these may not be optimal (see Question \ref{q:optimal}).

\subsection{Main ideas of the proofs} Here we outline the obstacles to proving Theorems \ref{th:main-density}, \ref{th:main-partition} and \ref{thm:B+C+A_i_discrete} and our strategies for overcoming them.  We will use  notation and terminology defined in \cref{sec:background}. 

\textbf{\cref{th:main-density}}:
To prove the first theorem, we find a parameterized solution to the relation 
\begin{equation}\label{eqn:p1wInSumset}\phi_1(w) \in \phi_1(A) + \phi_2(A) + \phi_3(A).
\end{equation} 
For instance, $w$ will satisfy (\ref{eqn:p1wInSumset}) if
\begin{equation*}\label{eqn:allBelong}
    u + w - \phi_2(v), u + \phi_1(v), \text{ and } u \text{ all belong to } A \text{ for some } u, v \in G.
\end{equation*}
Then Furstenberg's correspondence principle is applied to show that the set of such $w$ contains the support of the multilinear ergodic average:
\begin{equation}
\label{eq:define_I_w}
    I(w):=UC-\lim_{g \in G} \int_X f\cdot T_{\phi_1(g)} f\cdot T_{w-\phi_2(g)} f\, d\mu
\end{equation}
where $(X, \mu, T)$ is an ergodic $G$-system and $f: X \to [0, 1]$ is a measurable function with $\int_X f \, d \mu = d^*(A)$. As shown in \cite{abb}, the Kronecker factor $(Z, m_Z, R)$ is characteristic for the average in \eqref{eq:define_I_w} and so
\begin{equation*}
\label{eq:I_WKronecker}
    I(w) = UC-\lim_{g \in G} \int_X \tilde{f}\cdot R_{\phi_1(g)} \tilde{f}\cdot R_{w-\phi_2(g)} \tilde{f}\, dm_Z,
\end{equation*}
where $\tilde{f}:Z\to [0,1]$ satisfies $\int \tilde{f}\, dm_Z =\int f\, d\mu$ (see Section \ref{sec:Folner_sequence} for the definition of $UC-\lim$). 
In order to utilize the corresponding result in compact groups \cite{ll}, we need to show that the homomorphisms $\phi_1, \phi_2, \phi_3$ induce homomorphisms $\tilde{\phi}_j$ on $Z$ satisfying $\tilde{\phi}_j \circ \tau = \tau \circ \phi_j$, where $\tau$ is a natural embedding of $G$ in $Z$. This is straightforward under the additional assumption that spectrum of $(X, \mu, T)$ (i.e.~the group of eigenvalues) is closed under each $\phi_j$.  However, the spectrum of $(X,\mu,T)$ will not, in general, be closed under the $\phi_j$.

To overcome this problem, we find an ergodic extension $(Y, \nu, S)$ of $(X, \mu, T)$ such that the spectrum of $(Y,\nu,S)$ contains a subgroup $\Gamma$ which extends the spectrum of $(X,\mu,T)$ and is invariant under each $\phi_j$.  After lifting $f$ to $Y$, the Kronecker factor $\mb Z$ of $\mb X$ can be viewed as a factor of $\mb Y$, and is still characteristic for the averages in (\ref{eq:define_I_w}).  Thus, any extension of $\mb Z$ in $\mb Y$ will also be characteristic for these averages.  The group rotation factor $\mb{K}$ of $\mb Y$ corresponding to $\Gamma$ is such an extension of $\mb Z$, and this allows us to transfer the Bohr sets obtained in \cite{ll} to $G$. The diagram below demonstrates the relations among $\mb{X}, \mb{Y}, \mb{Z}$ and $\mb{K}$ where $\mb{Y} \to \mb{X}$ means $\mb{Y}$ is an extension of $\mb{X}$.

\begin{center}
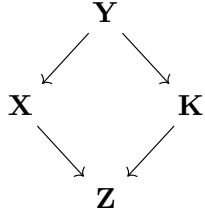

\begin{tikzcd}[column sep=small]
& \mb{Y} \arrow[ld] \arrow[rd] &\\
\mb{X} \arrow[rd] & & \mb{K} \arrow[ld] \\
& \mb{Z} &
\end{tikzcd}
\captionof{figure}{Relations among $X, Y, Z$ and $K$} 


\end{center}

\textbf{\cref{th:main-partition}}: In contrast to the sumset $\phi_1(A) + \phi_2(A) + \phi_3(A)$, a parametrized solution to $\phi_2(w) \in \phi_1(A) + \phi_2(A) - \phi_2(A)$ is 
\begin{equation}
\label{eq:parametrize_partition}
    \phi_2(v), u + w, u + \phi_1(v) \in A.
\end{equation}
The absence of the variable $u$ in the first function prohibits us from using Furstenberg's correspondence principle as we do in the Proof of \cref{th:main-density}. Instead we use (\cref{prop:correspondence_principle_bohr}), which models the relevant sumsets by convolutions on the Bohr compactification of $G$. This idea was used in \cite{bg} to express $A+A-A$ in terms of convolutions on a compact group. Parts of this process also already appeared in F{\o}lner's works \cite{folner1, folner2}.   

Specifically, we fix an invariant mean $\nu$ on $G$ with $d^*(A)=\nu(1_A)$, and observe that the difference set $A-A$ contains the support of the convolution $1_A*_\nu 1_{-A}(t):= \nu(1_A 1_{A + t})$.  This convolution is easily verified to be a positive definite function on $G$, which can therefore be represented as a Fourier transform of a positive measure $\sigma$ on $\widehat{G}$.  The continuous part of $\sigma$ can be ignored, allowing us to expand $1_A*_\nu 1_{-A}(t)$ as a Fourier series and express $A+A-A$ in terms of a convolution $h_A*h_A*h_{-A}$ on $bG$, the Bohr compactification of $G$.  

To study the more complicated expression $\phi_1(A) + \phi_2(A)-\phi_2(A)$, we need to investigate the relationship between $1_A*_\nu 1_{-A}$ and $1_{\phi_2(A)}*_{\nu} 1_{-\phi_2(A)}$. This investigation leads to the introduction of Radon-Nikodym densities $\rho_{A}^{\nu}, \rho_{\phi_2(A)}^{\nu}$ and their relationship in \cref{sec:rn}. After the required relationship is established, we put all ingredients together (\cref{prop:correspondence_principle_bohr}, \cref{cor:right_containment}) and use the compact counterpart in \cite{ll} to prove \cref{th:main-partition}.

\vspace{1em}

\textbf{\cref{thm:B+C+A_i_discrete}}:  This last theorem relies on two ingredients:
\begin{enumerate}
\item[(i)] an estimate for the rank and radius of a Bohr set in sumsets of the form $B+C+A_i$, where $B, C$ are subsets of a compact abelian group $K$ and $K= \bigcup_{i=1}^r A_i$.  We bound the rank and radius in terms of $m_K(B)$, $m_K(C)$, and $r$, using the pigeonhole principle and elementary estimates on Fourier coefficients.

\item[(ii)] a correspondence principle relating the expression $B+C+A_i$ in a discrete abelian group to an analogous expression in a compact abelian group.
\end{enumerate}

The two correspondence principles previously mentioned do not apply to the expression $B+C+A_i$; see \cref{remark:different_correspondence}. Instead, we use a result from \cite{griesmer-small-sum-pairs} which exhibits piecewise Bohr structure in $B + C$. This allows us to relate $B+C+A_i$ to a convolution $h_B*h_C*h_{A_i}$ on a compact group $K$, where each of these functions takes values in $[0,1]$, $\int h_B\, dm_K \geq d^*(B)$, $\int h_C\, dm_K\geq d^*(C)$, and $\sum_{i=1}^r h_{A_i}\geq 1_K$.

\begin{remark}
\label{remark:different_correspondence}
None of the three correspondence principles outlined above subsumes the others.  The sumset $\phi_1(A) + \phi_2(A) + \phi_3(A)$ with $\phi_1 + \phi_2 + \phi_3 = 0$ is translation invariant (replacing $A$ with a translate of $A$ does not affect this sumset) and so a straightforward application of Furstenberg's correspondence principle suffices. The second sumset $\phi_1(A) + \phi_2(A) - \phi_2(A)$ is no longer translation invariant and hence requires a different correspondence principle. Since the last sumset $B + C + A_i$ is neither translation invariant nor has the form $A + B - B$, we need yet another correspondence principle. Conversely, one cannot use the third principle for the first two sums since this principle does not retain the relations among the summands which are present in the fact that $\phi_1(A), \phi_2(A), \phi_3(A)$ are images of the same set $A$.
\end{remark}

\subsection{Outline of the article}
In \cref{sec:background}, we set up notation and present some basic facts about measure preserving systems, Bohr compactifications, Kronecker factors, etc. 
In Section \ref{sec:kronecker} we describe a general construction of homomorphisms from discrete groups into compact groups with dense image. This construction is used in the proofs of all of our results. Section \ref{sec:rn} is devoted to transferring functions on discrete groups to compact groups, an ingredient used in the proofs of Theorems \ref{th:main-partition} and \ref{thm:B+C+A_i_discrete}. After these preliminaries,  \cref{th:main-density} is proved in Sections \ref{sec:reducing_to_integrals} and \ref{sec:existence_bohr}, then \cref{th:main-partition} is proved in Sections \ref{sec:second_correspondence} and \ref{sec:bohr_sets_partition}. We prove the correspondence principle needed for \cref{thm:B+C+A_i_discrete} in \cref{sec:B+C+A_i_correspondence} and establish the theorem in \cref{sec:proof_B+C+A_i}. Lastly, we present some open questions in \cref{sec:open_question}.

\textbf{Acknowledgement.} We thank the anonymous referee for carefully reading the manuscript, pointing out some oversights, and providing many suggestions which help improve the presentation of the paper. The third author is partially supported by NSF Grant DMS-2246921.

\section{Background}
\label{sec:background}

\subsection{Notation and convention}

Throughout this paper, $G$ is a countable discrete abelian group, and $K$ is used to denote a compact Hausdorff abelian group. We use $m_K$ to denote the unique probability Haar measure on $K$. The set of all continuous functions on $K$ is denoted by $C(K)$.


For $r \in \N$, we use $[r]$ to denote $\{1, 2, \ldots, r\}$. By the support of a function $f$, denoted by $\supp f$, we mean $\{ x : f(x) \neq 0\}$.

\subsection{F{\o}lner sequences and uniform Ces\`aro averages}

\label{sec:Folner_sequence}

A sequence $\mb{F} = (F_N)_{N \in \N}$ of finite subsets of $G$ is a \textit{F{\o}lner sequence} if for all $g \in G$, 
\[
    \lim_{N \to \infty} \frac{|F_N \triangle (g + F_N)|}{|F_N|} = 0.
\]
Every countable abelian group admits a F{\o}lner sequence. This is due to the fact that all discrete abelian groups are amenable, and having a F{\o}lner sequence is one of the many equivalent definitions of amenability for countable discrete groups (see \cite{pier}).

If $\mb{F}$ is a F{\o}lner sequence and $A \subset G$, the \emph{upper density of $A$ with respect to $\mb{F}$} is 
\begin{equation*}
    \overline{d}_{\mb{F}}(A) := \limsup_{N \to \infty} \frac{|A \cap F_N|}{|F_N|}.
\end{equation*}
The \emph{upper Banach density of $A$} is
\begin{equation} \label{eq:density3}
    d^*(A) := \sup \{d_{\mb{F}}(A): \mb{F} \text{ is a F{\o}lner sequence}\}.
\end{equation}
(For a proof that the definitions \eqref{eq:density2} and \eqref{eq:density3} are equivalent, see \cite[Proposition A.6]{bjorklund-fish}.)

Let $u: G \to \C$ be a bounded sequence. We say $(u(g))_{g \in G}$ has a \emph{uniform Ces\`aro average} if for every F{\o}lner sequence $(F_N)_{N \in \N}$, the limit
\begin{equation*}
    \lim_{N \to \infty} \frac{1}{|F_N|} \sum_{n \in F_N} u(g)
\end{equation*}
exists and is independent of the choice of F{\o}lner sequence. In this case, we denote the common limit by $UC-\lim_{g \in G} u(g)$.

\subsection{Measure preserving systems}\label{sec:MPS}

A \emph{measure preserving $G$-system} (or \emph{$G$-system}) is a quadruple $\bX = (X, \calB, \mu, T)$ where $(X, \calB, \mu)$
is a probability space and $G$ acts on $X$ by transformations $T_g$ which preserve $\mu$; that is
\[
    \mu(T_g^{-1} A) = \mu(A)
\]
for all measurable $A \subset X$ and all $g \in G$. In this paper, all probability spaces underlying $G$-systems are assumed to be \textit{separable}, that is, $\calB$ is countably generated modulo null sets, or equivalently, $L^p(X, \calB, \mu)$ is separable for all $1\leq p < \infty$. In particular, if $X$ is a compact metric space, $\calB$ is its Borel $\sigma$-algebra and $\mu$ is any probability measure on $\calB$, then $(X, \calB, \mu)$ is separable. When there is no danger of confusion, we will suppress the $\sigma$-algebra $\mathcal B$ and write $(X, \mu, T)$ for a $G$-system.  We abbreviate $G$-systems with boldface letters: $\mb X=(X,\mu,T)$.  

The $G$-system $(X, \calB, \mu, T)$ is said to be \textit{ergodic} if $\mu(A \triangle T_g^{-1} A)=0$  for all $g \in G$ implies $\mu(A) = 0$ or $\mu(A)=1$.

If $f \in L^2(\mu)$ and $g \in G$, we write $T_g f$ for $f \circ T_g$. This defines an action of $G$ on $L^2(\mu)$ by unitary operators $T_g$. 

A $G$-system $\bY = (Y, \calD, \nu, S)$ together with a map $\pi: X\to Y$ defined for $\mu-$almost every $x \in X$ is a \emph{factor} of $\bX = (X, \calB, \mu, T)$  if $\pi_* \mu = \nu$ (i.e.~$\mu( \pi^{-1}(A)) = \nu(A)$ for all $A \in \calD$) and for all $g \in G$,
\[
    \pi(T_g x) = S_g \pi(x) \text{ for } \mu\text{-almost all } x \in X.
\]
The map $\pi$ is called a \emph{factor map}. 
The space $L^2(\nu)$ can be identified with the subspace of $L^2(\mu)$ consisting of functions of the form $h \circ \pi$ where $h \in L^2(\nu)$. We use $\E(\cdot|Y): L^2(\mu) \to L^2(\nu)$ to denote the corresponding orthogonal projection.   Later we abuse notation and write ``$\mb Y$ is a factor of $\mb X$'' instead of ``$(\mb Y,\pi)$ is a factor of $\mb X$.''

For a F{\o}lner sequence $(F_N)_{N \in \N}$ in $G$, functions $f_0, \ldots, f_k \in L^{\infty}(\mu)$, and sequences $s_1, \ldots, s_k: G \to G$, we say the factor $\mb Y$ is \emph{characteristic} for the average
\[
    I := \lim_{N \to \infty} \frac{1}{|F_N|} \sum_{g \in F_N} \int_X f_0 \cdot T_{s_1(g)} f_1 \cdots T_{s_k(g)} f_k \, d \mu
\]
if 
\[
    I = \lim_{N \to \infty} \frac{1}{|F_N|} \sum_{g \in F_N} \int_Y \tilde{f}_0 \cdot T_{s_1(g)} \tilde{f}_1 \cdots T_{s_k(g)} \tilde{f}_k \, d \nu
\]
where $\tilde{f}_i = \E(f_i|Y)$.

Let $\widehat{G}$ denote the Pontryagin dual of $G$, i.e.~the group of characters $\chi: G \to S^1$ with the operation of pointwise multiplication.
A character $\chi \in \widehat{G}$ called an \emph{eigenvalue} of $\bX$ if there exists a nonzero function $f \in L^2(\mu)$ such that $T_g f = \chi(g) f$ for all $g \in G$. The set of all eigenvalues for $\bX$ forms a subgroup of $\widehat{G}$, called the \textit{spectrum} of $\bX$ and denoted by $\calE(\bX)$. 
If $\bY$ is a factor of $\bX$, then $\calE(\bY)$ is a subgroup of $\calE(\bX)$.
If $\bX$ is ergodic, then all eigenspaces are one-dimensional and mutually orthogonal (for a proof, see \cite[Theorem 3.1]{walters}). Since $L^2(\mu)$ is separable, $\calE(\bX)$ is at most countable.

\subsection{Kronecker factors}

\label{sec:background_kronecker}

A \emph{group rotation $G$-system} is a $G$-system $\bK = (K, m_K, R)$ in which 
\begin{itemize}
    \item $K$ is a compact metrizable abelian group with Borel $\sigma$-algebra $\mathcal K$, probability Haar measure $m_K$, and
    
    \item there is a homomorphism $\tau: G \to K$ such $R_g(z) = z + \tau(g)$ for all $z \in K$ and $g \in G$.
\end{itemize} 
The group rotation $(K, m_K, R)$ is ergodic if and only if $\tau(G)$ is dense in $K$. In this case, $(K, m_K, R)$ is in fact \textit{uniquely ergodic}, i.e.~$m_K$ is the unique $R$-invariant probability measure on $K$ (for a proof, see \cite[Lemma 2.4]{abb}). Consequently, the sequence $(\tau(g))_{g \in G}$ is well-distributed in $K$, i.e.~for every continuous function $h \in C(K)$,
\begin{equation} \label{eq:well-distributed}
    UC-\lim_{g \in G} h(\tau(g)) = \int_K h \, d m_K.
\end{equation}

For an ergodic $G$-system $\bX$, its \emph{Kronecker factor} $\bK = (K,m_K,R)$ is a factor of $\bX$ with factor map $\pi: X\to K$ such that $L^2(m_K)$ is spanned by the eigenfunctions of $\bX$, meaning:
\begin{enumerate} 
\item[(i)] every eigenfunction $f\in L^2(\mu)$ is equal $\mu$-a.e.~to $\tilde{f}\circ \pi$ for some eigenfunction $\tilde{f}\in L^2(m_K)$, and 
\item[(ii)] the span of the eigenfunctions of $\bK$ is dense in $L^2(m_K)$.
\end{enumerate} 
It can be shown that $\bK$ is the largest factor of $\bX$ that is isomorphic to an ergodic group rotation $G$-system. More concretely, $\bK = (K, m_K, R)$ where $K= \widehat{\calE(\bX)}$ (see Lemma \ref{lem:rotations} (iii)).


Let $(X, \mu, T)$ be an ergodic $G$-system with Kronecker factor $(K, m_K,R)$ and $f_1, f_2, f_3\in L^{\infty}(X)$. It is shown in \cite[Theorem 3.1]{abb} that if $\phi, \psi: G \to G$ are homomorphisms such that $\phi(G)$, $\psi(G)$, and $(\psi-\phi)(G)$ each have finite index in $G$,
\begin{equation}
\label{eq:int_Kronecker}
    UC-\lim_{g \in G} \int_X f_1 \cdot T_{\phi(g)} f_2 \cdot T_{\psi(g)} f_3 \, d \mu
\end{equation}
exists and is equal to 
\[
    UC-\lim_{g \in G} \int_K \tilde{f}_1 \cdot R_{\phi(g)} \tilde{f}_2 \cdot R_{\psi(g)} \tilde{f}_3 \, d m_K
\]
where $\tilde{f}_i=\E(f_i|K)$ is projection of $f_i$ onto $L^2(m_K)$. In other words, the Kronecker factor is characteristic for the average in \eqref{eq:int_Kronecker}.

\subsection{Invariant means}

If $f \in \ell^{\infty}(G)$ and $t \in G$, define $f_t\in \ell^{\infty}(G)$ by $f_t(s) := f(s - t)$. An \emph{invariant mean} on $G$ is a positive linear functional $\nu: \ell^{\infty}(G) \to \C$ such that $\nu(1_G) = 1$ and $\nu(f_t) = \nu(f)$ for every $f \in \ell^{\infty}(G)$, $t \in G$. 

In the weak$^*$ topology on $\ell^{\infty}(G)^*$, 
the space $M(G)$ of invariant means forms a compact convex set. An invariant mean $\nu$ is said to be \textit{extremal}, or an \textit{extreme point}, if it cannot be written as a convex linear combination of two other invariant means. 

Bauer's maximum principle \cite[7.69]{ab} implies that if $C$ is a compact convex subset of a locally
convex Hausdorff space, then every real-valued continuous linear functional on $C$ has a maximizer that is an extreme point. Thus if $A \subset G$, there is an extremal invariant mean $\nu$ such that $d^*(A) = \nu(1_A)$. 

Let $H$ be a countable abelian group and $\phi: G \to H$ be a surjective homomorphism. For any invariant mean $\nu$ on $G$, the pushforward $\phi_* \nu$ is an invariant mean on $H$ and is defined by
\begin{equation*}
    \phi_*\nu(h) := \nu(h \circ \phi), 
\end{equation*}
for all $h \in \ell^{\infty}(H)$. Given $f \in \ell^{\infty}(G)$ and an invariant mean $\nu$, we sometimes write $\int_G f(t) \, d \nu(t)$ instead of $\nu(f)$. If $g \in \ell^{\infty}(G)$, we define the ``convolution'' of $f$ and $g$ with respect to $\nu$ by
\[
    f *_{\nu} g (t) := \int_G f(x) g(t - x) \, d \nu(x).
\]
In conventional notation, this could be written as $f*_{\nu} g:=\nu((g')_t f)$, where $g'(x):=g(-x)$. The following lemma is a special case of \cite[Proposition 2.1]{bjorklund-fish}.

\begin{lemma}
\label{lem:inv_mean_need_one_extremal}
If $\lambda$ is an extremal invariant mean on $G$ and $f, g \in \ell^{\infty}(G)$, then
\begin{equation} 
\label{eq:fubini}
    \iint_{G^2} f(t) g(t-s) \, d \lambda(t) d \mu(s) = \lambda(f) \lambda(g) 
\end{equation}
for every invariant mean $\mu$ on $G$.
\end{lemma}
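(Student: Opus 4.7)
The plan is to exploit the defining property of extreme points of the compact convex set $M(G)$: $\lambda$ is extremal if and only if whenever $\lambda = \alpha \nu_1 + (1-\alpha)\nu_2$ with $\nu_1, \nu_2 \in M(G)$ and $\alpha \in (0,1)$, one has $\nu_1 = \nu_2 = \lambda$. Since both sides of \eqref{eq:fubini} are bilinear in $(f,g)$, I would first reduce to the case that $g$ is real and nonnegative, by writing a general $g \in \ell^{\infty}(G)$ as a complex combination of at most four such functions. If furthermore $\lambda(g) = 0$, then $|f \cdot g_s| \leq \|f\|_{\infty}\, g_s$ together with positivity and invariance of $\lambda$ gives $|\lambda(f \cdot g_s)| \leq \|f\|_{\infty} \lambda(g) = 0$ for every $s \in G$ (where $g_s(t) = g(t-s)$), so both sides of \eqref{eq:fubini} vanish. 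It therefore suffices to handle $g \geq 0$ with $\lambda(g) > 0$.

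With that in hand, I would define
\[
\nu(f) := \frac{1}{\lambda(g)} \iint_{G^2} f(t) g(t-s)\, d\lambda(t)\, d\mu(s), \qquad f \in \ell^{\infty}(G),
\]
and verify that $\nu$ is itself an invariant mean. Positivity and linearity are immediate, and $\nu(1_G) = 1$ follows from $\int g(t-s)\, d\lambda(t) = \lambda(g)$, which is an instance of the translation invariance of $\lambda$. The step I expect to require the most care is translation invariance of $\nu$ itself: for $r \in G$, the substitution $t = u + r$ inside the $\lambda$-integral (valid because $\lambda$ is translation invariant) converts $\int f(t-r) g(t-s)\, d\lambda(t)$ into $h(s-r)$, where $h(s) := \int f(u) g(u-s)\, d\lambda(u)$, and invariance of $\mu$ then yields $\nu(f_r) = \mu(h_r)/\lambda(g) = \mu(h)/\lambda(g) = \nu(f)$. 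Thus both $\lambda$ and $\mu$ contribute invariance properties at different points in this calculation.

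The remaining task is to conclude $\nu = \lambda$, which is where extremality enters. I would produce a majorization of $\nu$ by $\lambda$: since $g_s \leq \|g\|_{\infty} \cdot 1_G$ pointwise and $f \geq 0$, positivity of $\lambda$ gives $\lambda(f \cdot g_s) \leq \|g\|_{\infty} \lambda(f)$, so integrating in $s$ and dividing by $\lambda(g)$ yields $\alpha \nu(f) \leq \lambda(f)$ for $f \geq 0$, where $\alpha := \lambda(g)/\|g\|_{\infty} \in (0,1]$. Consequently $\lambda - \alpha \nu$ is a positive, translation invariant functional with $(\lambda - \alpha \nu)(1_G) = 1 - \alpha$. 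If $\alpha < 1$, normalizing produces an invariant mean $\sigma$ such that $\lambda = \alpha \nu + (1-\alpha) \sigma$ is a convex combination, and extremality of $\lambda$ forces $\nu = \lambda$. If $\alpha = 1$, the nonnegative functional $\lambda - \nu$ vanishes at $1_G$ and is therefore identically zero (since $(\lambda-\nu)(f) \leq \|f\|_{\infty}(\lambda-\nu)(1_G) = 0$ on nonnegatives), again giving $\nu = \lambda$. Unwinding the definition of $\nu$ recovers \eqref{eq:fubini} for nonneg $g$, and bilinearity in $g$ finishes the general case.
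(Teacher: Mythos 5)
Your proof is correct. It shares the paper's core mechanism — build an auxiliary invariant mean out of the double integral and let extremality of $\lambda$ force it to coincide with $\lambda$ — but implements the convex-decomposition step differently. The paper normalizes the \emph{first} argument, assuming $0\le f\le 1$, views the double integral as a functional of $g$, and uses the complementary weights $f$ and $1-f$ to produce two invariant means $\eta,\eta'$ with $\lambda=\lambda(f)\,\eta+(1-\lambda(f))\,\eta'$ exactly; extremality then gives $\eta=\lambda$ at once, with only the trivial endpoint cases $\lambda(f)\in\{0,1\}$ handled separately. You instead fix $g\ge 0$, view the integral as a functional $\nu$ of $f$, and obtain the convex decomposition by a domination argument: $\alpha\nu\le\lambda$ on nonnegative functions with $\alpha=\lambda(g)/\|g\|_\infty$, so the remainder $\lambda-\alpha\nu$ normalizes to an invariant mean. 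This buys nothing is lost in rigor, but costs you the extra degenerate cases $\lambda(g)=0$ and $\alpha=1$, which the paper's complementary-weight trick avoids because its two pieces sum to $\lambda$ automatically; on the other hand, your domination argument is a touch more flexible, since it only needs the kernel to be bounded above by a constant rather than an exact partition of unity in the weight. Two small points worth making explicit if you write this up: the bound $|\lambda(f\cdot g_s)|\le\|f\|_\infty\,\lambda(g_s)$ in your $\lambda(g)=0$ case uses the standard inequality $|\lambda(h)|\le\lambda(|h|)$ for positive functionals (or a decomposition of $f$ into nonnegative parts), and at the very end the identity $\nu=\lambda$ holds for complex-valued $f$ by splitting $f$ into real/imaginary and positive/negative parts, since the mean equality was verified through nonnegative test functions.
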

For completeness we include a proof.

\begin{proof}
It suffices to prove \eqref{eq:fubini} for $0 \leq f \leq 1$. When $\lambda(f) = 0$ or 1, 
it is straightforward to check \eqref{eq:fubini}. Suppose $ \lambda(f) = \alpha \in (0,1)$. Define two invariant means $\eta$ and $\eta'$ by
\[
\eta(g) = \frac{1}{\alpha} \iint_{G^2} f(t) g(t-s) \, d \lambda(t) d \mu(s) \quad \textup{and} \quad \eta'(g) = \frac{1}{1-\alpha} \iint_{G^2} (1-f(t)) g(t-s) \, d \lambda(t) d \mu(s).
\]
Then it is easy to check that $\lambda(g) = \alpha \eta(g) + (1-\alpha) \eta'(g)$. Since $\lambda$ is extremal, we must have $\eta =\eta' = \lambda$, and we are done.
\end{proof}





\subsection{Bohr compactification}

The \emph{Bohr compactification} of $G$ is a compact abelian group $bG$, together with a homomorphism $\tau: G \to bG$ such that $\tau(G)$ is dense in $bG$ and every character $\chi \in \widehat{G}$ can be written as $\chi = \chi' \circ \tau$, where $\chi'$ is a continuous homomorphism from $bG$ to $S^1$. The homomorphism $\tau$ is universal with respect to homomorphisms into compact Hausdorff groups; that is if $K$ is another compact Hausdorff group and $\pi: G \to K$ is a homomorphism, then there is a unique continuous homomorphism $\tilde{\pi}: bG \to K$ such that $\pi = \tilde{\pi} \circ \tau$. The Bohr compactification also has a concrete description; it is the dual of $\widehat{G}$ where $\widehat{G}$ is given the discrete topology (see Section \ref{sec:kronecker}).

See \cite{rudin} for basic results on the Bohr compactification and \cite{bjorklund-fish} for a recent application to sumsets.

\subsection{Lemmas on Bohr sets}
We document two lemmas concerning Bohr sets for later use. Similar lemmas for compact abelian groups have been proved in \cite{ll}; the proofs for arbitrary abelian groups are identical and so we omit them. 




The first lemma states that the preimage of a Bohr set is a Bohr set. 

\begin{lemma} [{\cite[Lemma 2.9]{ll}}]

\label{lem:from_kronecker_up}
Let $G, H$ be abelian groups and $\tau: G \to H$ be a homomorphism. If $B$ is a Bohr-$(k, \eta)$ set in $H$, then $\tau^{-1}(B)$ is a Bohr-$(k, \eta)$ set in $G$.

\end{lemma}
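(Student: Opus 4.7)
The plan is straightforward: express $B$ as $B(\Lambda;\eta)$ for some set $\Lambda$ of $k$ characters of $H$, and pull the characters back through $\tau$. For each $\gamma \in \Lambda$, the composition $\gamma \circ \tau : G \to S^1$ is a homomorphism, and is a character of $G$ (continuity is automatic in the contexts of interest, e.g.\ when $G$ is discrete, and otherwise follows from the implicit continuity of $\tau$). Setting $\Lambda' := \{\gamma \circ \tau : \gamma \in \Lambda\}$, we have $|\Lambda'| \leq k$, and unwinding the definition of a Bohr set yields
\[
\tau^{-1}(B) = \{y \in G : |\gamma(\tau(y)) - 1| < \eta \text{ for all } \gamma \in \Lambda\} = \{y \in G : |\chi(y) - 1| < \eta \text{ for all } \chi \in \Lambda'\} = B(\Lambda'; \eta),
\]
so $\tau^{-1}(B)$ is a Bohr set in $G$ with radius $\eta$ and rank at most $k$.

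The only mild subtlety is that distinct $\gamma_1 \neq \gamma_2$ in $\Lambda$ may coincide on $\tau(G)$, giving $|\Lambda'| < k$. If one insists on rank exactly $k$, pad $\Lambda'$ with copies of the trivial character of $G$; since the trivial character satisfies $|1 - 1| = 0 < \eta$, this imposes no new constraint and leaves $\tau^{-1}(B)$ unchanged. There is no substantive obstacle here — the lemma is essentially the observation that pre-composition with $\tau$ defines a map $\widehat{H} \to \widehat{G}$, the Pontryagin-dual incarnation of $\tau$ itself, and Bohr sets are manifestly functorial with respect to such dual maps.
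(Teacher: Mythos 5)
Your proof is correct and is essentially the same as the paper's (omitted, cited from Part I) argument: pull back the defining characters along $\tau$, observe that each $\gamma\circ\tau$ is a character of $G$, and note that $\tau^{-1}(B)$ is cut out by the same inequalities. The remark about padding with the trivial character to preserve the rank count is a harmless extra detail, not a substantive difference.
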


The second lemma says that the image of a Bohr set under a homomorphism with finite index image is again a Bohr set.

\begin{lemma}[{\cite[Lemma 2.10]{ll}} and {\cite[Lemma 1.7]{griesmer-br}}] 
\label{lem:phiB_Bohr}
Let $G$ be an abelian group and $\phi: G \to G$ be an endomorphism with $[G: \phi(G)] < \infty$. If $B$ is a Bohr-$(k, \eta)$ set in $G$, then $\phi(B)$ is a Bohr-$(k', \eta')$ set in $G$ where $k', \eta'$ depend only on $k$, $\eta$, and $[G: \phi(G)]$.
\end{lemma}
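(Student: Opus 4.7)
The plan is to exhibit explicit characters of $G$ and a radius so that the resulting Bohr set is contained in $\phi(B)$, with parameters depending only on $k$, $\eta$, and $d := [G:\phi(G)]$.

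First I would show that $\phi(G)$ itself is a Bohr set of bounded rank and radius. Since $[G:\phi(G)] = d$, the annihilator $\phi(G)^\perp \subset \widehat G$ is a finite group of order $d$, generated by at most $l \leq \lceil \log_2 d \rceil$ characters $\psi_1,\ldots,\psi_l$. Each $\psi_j$ takes values in the $d$-th roots of unity $\mu_d$, and any nontrivial element of $\mu_d$ has distance at least $2\sin(\pi/d) \geq 4/d$ from $1$, so
\[
\phi(G) = B(\psi_1,\ldots,\psi_l;\, 4/d).
\]

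Next I would dispatch the injective case ($\ker\phi = 0$) directly: here $\phi\colon G \to \phi(G)$ is a group isomorphism, and each character $\chi_i \circ \phi^{-1}$ of $\phi(G)$ extends to a character $\tilde\chi_i$ of $G$ (using that $S^1$ is injective as a $\mathbb Z$-module, so characters of subgroups of abelian groups extend). For $g = \phi(h) \in \phi(G)$ we have $\tilde\chi_i(g) = \chi_i(h)$, so the Bohr conditions $|\tilde\chi_i(g) - 1| < \eta$ on $g \in \phi(G)$ are equivalent to $h \in B$. Combining with the first step yields
\[
\phi(B) = B\bigl(\tilde\chi_1,\ldots,\tilde\chi_k, \psi_1,\ldots,\psi_l;\, \min(\eta, 4/d)\bigr),
\]
a Bohr-$(k+l, \min(\eta, 4/d))$ set.

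The main obstacle will be the general case, where the fiber $\phi^{-1}(g) = h_0 + \ker\phi$ has multiple elements and we must ensure that \emph{some} representative lies in $B$. To handle this I would factor $\phi = \iota \circ \pi$ with $\pi\colon G \twoheadrightarrow G/\ker\phi$ and $\iota\colon G/\ker\phi \hookrightarrow G$ (injective, with image $\phi(G)$), apply the previous step to $\iota$, and then reduce to showing that $\pi(B)$ contains a Bohr set in $G/\ker\phi$ of rank and radius controlled by $k,\eta,d$. The difficulty here is that the short exact sequence $0 \to \ker\phi \to G \to G/\ker\phi \to 0$ need not split, so one cannot simply pull back characters via a homomorphism section. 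The plan is to introduce at most $O(k \log d)$ additional ``resolving'' characters witnessing that the tuple $(\chi_i(h_0))_i \in (S^1)^k$ lies near the subgroup $A := \{(\chi_i(n))_i : n \in \ker\phi\}$, so that a shift $h_0 + n$ by an appropriate $n \in \ker\phi$ brings each $\chi_i(h_0+n)$ close to $1$. For kernels that are not $d$-torsion the characters witnessing this condition must be constructed more carefully (via the Bohr compactification of $G/\ker\phi$, where $A$ sits inside a genuinely compact target), but the net effect is still a rank increase by $O(\log d)$ and a radius shrinkage to $\Omega(\eta/d)$, yielding dependence solely on $k$, $\eta$, and $d$.
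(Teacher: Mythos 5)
Your first two steps are correct: $\phi(G)$ is cut out by at most $\lceil \log_2 d\rceil$ characters of the annihilator at radius $4/d$, and in the injective case extending $\chi_i\circ\phi^{-1}$ from $\phi(G)$ to $G$ works as you say (with the small caveat that you only get the containment $\phi(B)\supseteq B(\tilde\chi_1,\dots,\tilde\chi_k,\psi_1,\dots,\psi_l;\min(\eta,4/d))$, not the asserted equality, since shrinking the radius to $4/d$ may exclude points of $\phi(B)$; containment is all that is needed here and in the paper's applications). The genuine gap is your final step, which is where the whole difficulty of the lemma lives: the claim that $\pi(B)$ contains a Bohr set of $G/\ker\phi$ with controlled parameters is asserted, not proved, and the bounds you announce --- $O(k\log d)$ (or $O(\log d)$) extra ``resolving'' characters and radius $\Omega(\eta/d)$ --- cannot be of that shape, because $\ker\phi$ is not constrained by $d$ at all. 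For instance, for $G=\bigoplus_{n\in\N}\Z$ and $\phi$ the left shift, or $G=\bigoplus_{n\in\N}\Z/2\Z$ with $\phi(x_1,x_2,x_3,\dots)=(x_2,x_4,x_6,\dots)$, one has $d=1$ while the kernel is nontrivial or infinite; the closure $\overline{A}$ of $\{(\chi_1(n),\dots,\chi_k(n)):n\in\ker\phi\}$ in $(S^1)^k$ can then be an essentially arbitrary closed subgroup (e.g.\ a skew one-parameter subtorus), and ``$d$-torsion'' plays no role. What your argument actually requires is a uniform quantitative statement: for every closed subgroup $\overline{A}\le (S^1)^k$, its $\eta$-neighborhood contains a set $\{z:|\xi_j(z)-1|<\epsilon,\ 1\le j\le m\}$ where the $\xi_j$ are characters of $(S^1)^k$ annihilating $\overline{A}$ and $m,\epsilon$ depend only on $k$ and $\eta$; pulled back through $(\chi_1,\dots,\chi_k)$, these give characters of $G$ trivial on $\ker\phi$, which is what lets you descend to $G/\ker\phi$. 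This statement is true, but it is a transference-type (quantitative Kronecker) theorem: one passes to the quotient torus $(S^1)^k/\overline{A}$, whose covering radius is bounded by that of $(S^1)^k$, and uses all annihilating characters of bounded height, whose number is controlled by a dual lattice-point count. That is a substantive argument you have not supplied, and its bounds depend on $k$ and $\eta$, not on $d$; so as written the proof has a hole exactly at its crucial point.

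For comparison, the proof in the cited sources \cite{ll,griesmer-br} sidesteps the kernel entirely and is much shorter: take $A'$ to be the Bohr set on the same characters with radius $\eta/4$, so $B\supseteq A'-A'+A'-A'$ and hence $\phi(B)\supseteq\phi(A')-\phi(A')+\phi(A')-\phi(A')$; since $G$ is covered by $(C/\eta)^k$ translates of $A'$ for an absolute constant $C$, $\phi(G)$ is covered by that many translates of $\phi(A')$ and $G$ by $d$ times as many, so $d^*(\phi(A'))\geq d^{-1}(\eta/C)^k$; finally F{\o}lner's quantitative Bogolyubov theorem for discrete abelian groups puts a Bohr set inside the fourfold sum--difference set, with rank and radius depending only on this density, hence only on $k,\eta,d$. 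If you want to keep your character-by-character scheme you must prove the transference step above (and correct the claimed dependence on $d$); otherwise the Bogolyubov route is the efficient repair.
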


\subsection{Almost periodic functions and null functions}

\label{sec:almost_per}

A function on $G$ of the form $g \mapsto \sum_{i=1}^k c_i \chi_i(g)$ where $c_i \in \C$ and $\chi_i \in \widehat{G}$ is called a \emph{trigonometric polynomial}.

An $f \in \ell^{\infty}(G)$ is called a \emph{(Bohr) almost periodic function} if it is a uniform limit of a sequence of trigonometric polynomials. Alternatively, $f$ is almost periodic if $f = h \circ \tau$ where $h$ is a continuous function on $bG$ and $\tau: G \to bG$ is the natural embedding. 
Given an almost periodic function $f$, a $\chi\in \widehat{G}$, and an invariant mean $\nu$ on $G$, we write $\hat{f}(\chi)$ for the \emph{Fourier coefficient} $\nu(f\overline{\chi})$ - it is easy to verify that for an almost periodic $f$, $\hat{f}(\chi)$ does not depend on the choice of $\nu$.

An $f \in \ell^{\infty}(G)$ is called a \emph{null function} if $\nu(|f|) = 0$ for every invariant mean $\nu$ on $G$. 

     


\section{Dense images of discrete groups in compact groups} 
\label{sec:kronecker} 





This section describes a general way to construct a homomorphism $\tau:G\to K$ from a discrete abelian group $G$ into a compact abelian group $K$.  It also provides sufficient conditions for an endomorphism $\phi$ of $G$ to induce an endomorphism $\tilde{\phi}$ of $K$. This framework provides a concrete description of the Bohr compactification of $G$ and of the Kronecker factor of an ergodic $G$-system. We start with the following.

\begin{lemma}\label{lem:phiStarProperties}
	Let $\Gamma$ be a locally compact abelian group and let $\phi:\Gamma\to \Gamma$ be a continuous endomorphism.  Define an endomorphism $\phi^*:\widehat{\Gamma}\to\widehat{\Gamma}$ by $\phi^*(\chi)=\chi \circ \phi$. Then
	\begin{enumerate}
		\item[(i)]   $\phi^*$ is continuous.
		
		\item[(ii)]			Under the canonical identification of $\widehat{\widehat{\Gamma}}$ with $\Gamma$, $(\phi^*)^*=\phi$.
	\end{enumerate}
	
\end{lemma}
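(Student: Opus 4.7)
The statement is a standard fact about functoriality of Pontryagin duality, so the plan is to proceed by unwinding definitions rather than invoking deeper duality machinery.

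For part (i), I would use the fact that the topology on $\widehat{\Gamma}$ is the compact-open topology, so a subbasis of open sets is given by $W(K,V):=\{\chi\in\widehat{\Gamma}:\chi(K)\subset V\}$ where $K\subset\Gamma$ is compact and $V\subset S^1$ is open. To verify continuity of $\phi^*$ it then suffices to check that the preimage of each such $W(K,V)$ is open. A direct computation gives
\[
(\phi^*)^{-1}(W(K,V))=\{\chi:\chi\circ\phi\in W(K,V)\}=\{\chi:\chi(\phi(K))\subset V\}=W(\phi(K),V),
\]
and since $\phi$ is continuous, $\phi(K)$ is compact, so this preimage is a basic open set. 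Hence $\phi^*$ is continuous.

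For part (ii), let $\iota:\Gamma\to\widehat{\widehat{\Gamma}}$ denote the canonical Pontryagin isomorphism defined by $\iota(x)(\chi)=\chi(x)$; the claim is that $(\phi^*)^*\circ\iota=\iota\circ\phi$. I would simply unwind the definitions: for every $x\in\Gamma$ and every $\chi\in\widehat{\Gamma}$,
\[
(\phi^*)^*(\iota(x))(\chi)=\iota(x)(\phi^*(\chi))=\phi^*(\chi)(x)=(\chi\circ\phi)(x)=\chi(\phi(x))=\iota(\phi(x))(\chi).
\]
Since this holds for all $\chi$, we conclude $(\phi^*)^*(\iota(x))=\iota(\phi(x))$, which is exactly the statement that $(\phi^*)^*=\phi$ under the identification.

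There is no real obstacle: both parts are essentially formal manipulations. The only point worth mentioning is that in part (i) one must be careful to specify that $\widehat{\Gamma}$ is topologized by the compact-open topology (standard for duals of locally compact abelian groups), and to invoke continuity of $\phi$ at precisely the step where $\phi(K)$ needs to be compact. In part (ii) the entire content is that the Pontryagin duality isomorphism $\iota$ is natural in $\Gamma$, which is what the chain of equalities above encodes. No quantitative estimates or nontrivial structural facts are required.
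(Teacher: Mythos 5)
Your proof is correct and takes essentially the same approach as the paper: part (ii) is the identical evaluation-map computation, and part (i) hinges on the same key point that $\phi(K)$ is compact whenever $K\subset\Gamma$ is compact. The only cosmetic difference is that the paper checks continuity in (i) via nets converging uniformly on compact sets, whereas you verify that $(\phi^*)^{-1}(W(K,V))=W(\phi(K),V)$ for the compact-open subbasis; both are equally valid.
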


\begin{proof}

	
	(i) By definition, $\widehat{\Gamma}$ is equipped with the topology of uniform convergence on compact subsets of $\Gamma$.  It therefore suffices to prove that if $(\chi_n)_{n\in I}$ is a net of elements of $\widehat{\Gamma}$ converging to $\chi\in \widehat{\Gamma}$ uniformly on compact subsets of $\Gamma$, then $(\chi_n\circ \phi)_{n\in I}$ converges to $\chi \circ \phi$ uniformly on compact subsets of $\Gamma$.  Continuity of $\phi$ implies $\phi(K)$ is compact for every compact $K\subset \Gamma$, so the assumption that $\chi_n\to \chi$ uniformly on every compact $K\subset \Gamma$ implies $\chi_n \to \chi$ uniformly on $\phi(K)$ for every compact $K\subset \Gamma$.  But this means $(\chi_n \circ \phi)_{n\in I}$ converges to $\chi\circ \phi$ uniformly on compact subsets of $\Gamma$, as desired.

	(ii) For $\gamma\in \Gamma$, define the evaluation map $e_\gamma(\chi) = \chi(\gamma)$ for any $\chi \in \widehat{\Gamma}$.  It suffices to prove that
	\[
	(\phi^*)^*(e_\gamma) = e_{\phi(\gamma)},
	\]
	meaning $(\phi^*)^*(e_\gamma)(\chi) = \chi(\phi(\gamma))$ for all $\chi\in\widehat{\Gamma}$.  To see this, note that $\chi\mapsto (\phi^*)^*(e_\gamma)(\chi)$ is defined by $e_{\gamma}(\phi^*(\chi))=e_{\gamma}(\chi\circ\phi).$
\end{proof}


We now apply \cref{lem:phiStarProperties} in the case where $\Gamma$ is a discrete group.

\begin{lemma}\label{lem:Embeddings} 
Let $\Lambda$ be a subgroup of $\widehat{G}$, viewed as a discrete group, so that  $\widehat{\Lambda}$ is compact. For $g \in G$, define the evaluation map $e_g (\chi) = \chi(g)$ for $\chi \in \widehat{G}$. Define a homomorphism $\tau: G\to \widehat{\Lambda}$ by $\tau(g)=e_g|_{\Lambda}$.  Then
	
	\begin{enumerate}  \item[(i)]   $\tau(G)$ is dense in $\widehat{\Lambda}$.
		
		\item[(ii)]  Suppose $\phi:G\to G$ is an endomorphism such that $\chi\circ \phi\in \Lambda$ for all $\chi\in \Lambda$.  Then there is a continuous endomorphism $\tilde{\phi}$ of $\widehat{\Lambda}$ such that $\tilde{\phi}\circ \tau=\tau\circ \phi$.  Furthermore, $[\widehat{\Lambda}:\tilde{\phi}(\widehat{\Lambda})]\leq [G:\phi(G)]$.
		
	\end{enumerate}
\end{lemma}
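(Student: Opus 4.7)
For part (i), the plan is a direct Pontryagin duality argument. Since $\widehat{\Lambda}$ is compact and $\tau(G)$ is a subgroup, its closure $H:=\overline{\tau(G)}$ is a closed subgroup, and by the annihilator theorem density reduces to showing $H^\perp\subset \widehat{\widehat{\Lambda}}\cong \Lambda$ is trivial. Any $\chi\in H^\perp$ satisfies $\chi(g)=\tau(g)(\chi)=1$ for all $g\in G$, hence $\chi$ is trivial, forcing $H=\widehat{\Lambda}$.

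For part (ii), the plan is to invoke \cref{lem:phiStarProperties} with $\Gamma=\Lambda$ viewed as a discrete group, so that $\widehat{\Gamma}=\widehat{\Lambda}$ is compact. The hypothesis $\chi\circ\phi\in\Lambda$ for every $\chi\in\Lambda$ is exactly the statement that the dual endomorphism $\phi^*:\widehat{G}\to\widehat{G}$, $\phi^*(\chi)=\chi\circ\phi$, restricts to an endomorphism of $\Lambda$; the restriction, still denoted $\phi^*$, is trivially continuous on the discrete group $\Lambda$. I then set $\tilde{\phi}:=(\phi^*)^*$, which is continuous by \cref{lem:phiStarProperties}(i), and the intertwining identity $\tilde{\phi}\circ\tau=\tau\circ\phi$ follows directly by unwinding definitions: for $g\in G$ and $\chi\in\Lambda$,
\[
\tilde{\phi}(\tau(g))(\chi)=\tau(g)(\phi^*(\chi))=(\chi\circ\phi)(g)=\tau(\phi(g))(\chi).
\]

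The main obstacle is the index bound $[\widehat{\Lambda}:\tilde{\phi}(\widehat{\Lambda})]\le [G:\phi(G)]$. My plan is to translate the index into the order of $\ker\phi^*$ via Pontryagin duality, then estimate that kernel explicitly. Since $\widehat{\Lambda}$ is compact and $\tilde{\phi}$ continuous, $\tilde{\phi}(\widehat{\Lambda})$ is closed; by the standard duality between kernels and images in locally compact abelian groups one gets $\tilde{\phi}(\widehat{\Lambda})=(\ker\phi^*)^\perp$. Consequently $\widehat{\Lambda}/\tilde{\phi}(\widehat{\Lambda})$ is Pontryagin dual to the discrete group $\ker\phi^*$, and they have the same cardinality (finite or infinite). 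Finally, $\ker\phi^*=\{\chi\in\Lambda:\chi|_{\phi(G)}=1\}$ embeds into the annihilator $\phi(G)^\perp\subset\widehat{G}$, which is canonically isomorphic to $\widehat{G/\phi(G)}$ and has order $[G:\phi(G)]$ when that index is finite; the bound is vacuous otherwise.
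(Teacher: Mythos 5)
Your proposal is correct, and while the construction of $\tilde{\phi}$ via \cref{lem:phiStarProperties} and the verification of $\tilde{\phi}\circ\tau=\tau\circ\phi$ coincide with the paper's argument, your proofs of density and of the index bound take a genuinely different, duality-theoretic route. For (i), the paper argues by hand: it fixes finitely many $\chi_1,\dots,\chi_d\in\Lambda$, considers the subgroup $H=\{(\chi_1(g),\dots,\chi_d(g)):g\in G\}\subset (S^1)^d$, and separates a hypothetical point outside $\overline{H}$ by a character $x_1^{n_1}\cdots x_d^{n_d}$ of the torus, deriving a contradiction with $\psi$ being a character of $\Lambda$; you instead invoke the full Pontryagin duality $\widehat{\widehat{\Lambda}}\cong\Lambda$ together with the double-annihilator theorem for the closed subgroup $\overline{\tau(G)}$, which is shorter but leans on heavier standard machinery, whereas the paper's version is essentially self-contained. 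For the index bound, the paper picks coset representatives $t_1,\dots,t_k$ of $\phi(G)$ and observes that the finitely many closed translates $\tau(t_j)+\overline{\tau(\phi(G))}$ cover a dense subset of $\widehat{\Lambda}$, hence all of it, so $\overline{\tau(\phi(G))}\subset\tilde{\phi}(\widehat{\Lambda})$ has index at most $k$. You instead use closedness of $\tilde{\phi}(\widehat{\Lambda})$ (compactness) plus the kernel--image duality $\tilde{\phi}(\widehat{\Lambda})=(\ker\phi^*)^\perp$ to identify $\widehat{\Lambda}/\tilde{\phi}(\widehat{\Lambda})$ with the dual of $\ker\phi^*=\{\chi\in\Lambda:\chi|_{\phi(G)}=1\}$, which embeds in $\phi(G)^\perp\cong\widehat{G/\phi(G)}$; this not only gives the bound but actually identifies the index as $|\ker\phi^*|$, a slightly sharper piece of information. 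Both arguments are sound; the paper's buys elementarity, yours buys a cleaner structural description of the cokernel. The only points to keep explicit if you write this up are the closedness of $\tilde{\phi}(\widehat{\Lambda})$ (which you did note) and the fact that a compact abelian group with finite dual is finite of the same order, so that cardinality of the quotient really is the index.
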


\begin{proof} (i) Let $\psi\in\widehat{\Lambda}$, let $F = \{\chi_1,\dots,\chi_d\} \subset \Lambda$ be finite, and $\varepsilon>0$.  We will show that there is a $g\in G$ such that $|\psi(\chi_j)-e_g(\chi_j)|<\varepsilon$ for all $\chi_j\in F$.  Consider the subgroup
	\[
	H:=\{(\chi_1(g),\dots,\chi_d(g)):g\in G\}\subset (S^1)^d.
	\]
	It suffices to prove that 
	\begin{equation}\label{eqn:vect}
	\vec{t}:=(\psi(\chi_1),\dots,\psi(\chi_d))\in \overline{H}.
	\end{equation}  
	Assume, to get a contradiction, that (\ref{eqn:vect}) is false.  Then there is a nontrivial character $\alpha\in \widehat{(S^{1})^d}$ which annihilates $\overline{H}$ but does not annihilate $\vec{t}$. Writing $\alpha(x_1,\dots,x_d)$ as $x_1^{n_1} \cdots x_d^{n_d}$, we have
	\begin{equation}
	\label{eqn:Relation}
	\chi_1(g)^{n_1}\cdots \chi_d(g)^{n_d} = 1 \qquad \text{for all } g\in G,
	\end{equation}
	but $\psi(\chi_1)^{n_1}\cdots \psi(\chi_d)^{n_d}\neq 1$.  Since $\psi$ is a character, the latter equation means \begin{equation}
	\label{eqn:NontrivialPsi}
	\psi(\chi_1^{n_1}\cdots\chi_d^{n_d})\neq 1.
	\end{equation}
	But (\ref{eqn:Relation}) means that $\chi_1^{n_1}\cdots \chi_d^{n_d}$ is trivial, contradicting (\ref{eqn:NontrivialPsi}). 
	
	\medskip
	
	\noindent (ii)  Define $\phi':\Lambda\to \Lambda$ by $\phi'(\chi)=\chi\circ \phi$.  Let $\tilde{\phi}:=(\phi')^*$ as in Lemma \ref{lem:phiStarProperties}, meaning that for $\psi\in\widehat{\Lambda}$, $\tilde{\phi}(\psi)=\psi\circ \phi'$. By Lemma \ref{lem:phiStarProperties}, $\tilde{\phi}$ is a continuous endomorphism.  To verify that $\tilde{\phi}\circ\tau=\tau\circ\phi$, fix $\chi\in\Lambda$, $g\in G$, and evaluate
	\[
	\tilde{\phi}(\tau(g))(\chi)=e_g(\phi'(\chi))=e_g(\chi\circ \phi)=\chi\circ\phi(g)=e_{\phi(g)}(\chi)=\tau(\phi(g))(\chi).\]
	Thus $\tilde{\phi}\circ\tau = \tau\circ \phi$.
	
	Now let $k=[G:\phi(G)]$ (assuming this index is finite), and let $t_j+\phi(G), j=1,\dots,k$ be coset representatives of $\phi(G)$.    The identity $\tilde{\phi}\circ\tau=\tau\circ \phi$ implies $\tilde{\phi}(\widehat{\Lambda})$ contains $\overline{\tau(\phi(G))}$.  The latter subgroup has index at most $k$, since the translates $\overline{\tau(t_j+\phi(G))} = \tau(t_j)+\overline{\tau(\phi(G))}$ are closed and cover a dense subset of $\widehat{\Lambda}$.  Thus $\tilde{\phi}(\widehat{\Lambda})$ also has index at most $k$.
	\end{proof}
	
	
	

It can be shown that all homomorphisms from $G$ into compact groups with dense images arise from the construction in \cref{lem:Embeddings}, though we do not need this fact. When $\Lambda = \widehat{G}$ with the discrete topology, $\widehat{\Lambda}$ is the Bohr compactification $bG$ of $G$, which is relevant in the proof of \cref{th:main-partition}. 


In the proofs of Theorems \ref{th:main-density} and \ref{thm:B+C+A_i_discrete}, we will focus on the case where $\Lambda$ is at most \textit{countable}. The relevance of countability is that, in this case, $\widehat{\Lambda}$ is compact and metrizable. Consequently, its Borel $\sigma$-algebra is separable (so the theory of factors applies).

The group $\wL$ being abelian, we can write its group operation additively. Equipped with its normalized Haar measure $m_{\wL}$, $\wL$ is naturally endowed with a group rotation via the $G$-action $R$ given by $R_g(z) := z + \tau(g)$ for all $z \in \wL$ and $g\in G$, where $\tau$ is defined in Lemma \ref{lem:Embeddings}. Since $\tau(G)$ is dense in $\wL$, this action is ergodic. We will now state some properties of these group rotations.

\begin{lemma} \label{lem:rotations}\
\begin{enumerate}
 \item [(i)] For all countable subgroups $\Lambda$ of $\widehat{G}$, we have $\calE(\wL, m_{\wL}, R) = \Lambda$.  Furthermore, all the eigenvectors of $R$ corresponding to the eigenvalue $\lambda \in \Lambda$ are constant multiples of $v_{\lambda}$, where $v_\lambda(x) = x(\lambda)$ for all $x \in \wL$.
 
        \item [(ii)] If $\Lambda_1 \leq \Lambda_2$ are countable subgroups of $\widehat{G}$, then the group rotation associated with $\widehat{\Lambda_1}$ is a factor of the group rotation associated with $\widehat{\Lambda_2}$.
        
    \item [(iii)] If $\bX=(X,\mu,T)$ is an ergodic $G$-system and $\Lambda = \calE(\bX)$, then $(\wL, m_{\wL}, R)$ is the Kronecker factor of $\bX$. 

\end{enumerate}
\end{lemma}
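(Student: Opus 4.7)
My plan is to treat the three parts separately; parts (i) and (ii) are essentially formal consequences of Pontryagin duality, while part (iii) contains the only real content.

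For part (i), I would begin by verifying directly that for each $\lambda \in \Lambda$ the function $v_\lambda(x) = x(\lambda)$ is an eigenfunction of $R$ with eigenvalue $\lambda$: since $\tau(g)(\lambda) = e_g(\lambda) = \lambda(g)$ by definition of $\tau$, we get $v_\lambda(R_g z) = (z+\tau(g))(\lambda) = z(\lambda)\lambda(g) = \lambda(g) v_\lambda(z)$. This yields $\Lambda \subseteq \calE(\wL, m_{\wL}, R)$. For the reverse inclusion and for the one-dimensionality of eigenspaces, I would invoke Pontryagin duality: because $\Lambda$ is discrete, the dual of $\wL$ is canonically $\Lambda$, and the family $\{v_\lambda : \lambda \in \Lambda\}$ is an orthonormal basis of $L^2(m_{\wL})$. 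If $f = \sum_\lambda c_\lambda v_\lambda$ satisfies $R_g f = \chi(g) f$, comparing Fourier coefficients forces $c_\lambda\lambda(g) = c_\lambda \chi(g)$ for all $g\in G$, so $c_\lambda \neq 0$ only when $\lambda = \chi$. Hence $\chi \in \Lambda$ and $f$ is a scalar multiple of $v_\chi$.

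For part (ii), the candidate factor map is the restriction $\pi \colon \widehat{\Lambda_2} \to \widehat{\Lambda_1}$, $\pi(z) = z|_{\Lambda_1}$. It is a continuous group homomorphism, and its image contains $\pi(\tau_2(G)) = \tau_1(G)$, which is dense by \cref{lem:Embeddings}(i); since $\pi(\widehat{\Lambda_2})$ is compact and therefore closed, $\pi$ is surjective. Any continuous surjective homomorphism between compact abelian groups carries Haar measure to Haar measure, so $\pi_* m_{\widehat{\Lambda_2}} = m_{\widehat{\Lambda_1}}$, and equivariance $\pi(z+\tau_2(g)) = \pi(z) + \tau_1(g)$ follows directly from the definition.

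For part (iii), my plan is to build a factor map $\pi \colon X \to \wL$ out of eigenfunctions of $\bX$. Since $\bX$ is ergodic, each eigenspace is one-dimensional, every eigenfunction has constant modulus, and so for each $\lambda \in \Lambda$ I may choose a measurable $S^1$-valued eigenfunction $f_\lambda$ with $T_g f_\lambda = \lambda(g) f_\lambda$ and $f_0 \equiv 1$. The crux is to make these choices compatible, i.e., $f_{\lambda+\mu} = f_\lambda f_\mu$ for all $\lambda, \mu \in \Lambda$. The \emph{main obstacle} is this compatibility: a priori one only has $f_\lambda f_\mu = c(\lambda,\mu) f_{\lambda+\mu}$ for a $2$-cocycle $c \colon \Lambda\times \Lambda\to S^1$, and one must show that the resulting central extension of $\Lambda$ by $S^1$ splits. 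This follows from the divisibility of $S^1$ (so $\operatorname{Ext}^1(\Lambda, S^1) = 0$); the splitting gives the desired normalization. Alternatively, one can bypass this by invoking Halmos--von Neumann to realize the abstract Kronecker factor as an ergodic compact group rotation $(K, m_K, R')$ and then use part (i) together with Pontryagin duality to identify $K$ with $\wL$ through the equality of spectra.

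Once compatible $f_\lambda$ are in hand, I define $\pi(x)(\lambda) := f_\lambda(x)$. The identity $f_\lambda f_\mu = f_{\lambda+\mu}$ holds almost everywhere for each fixed pair, and countability of $\Lambda$ lets me intersect the resulting null sets so that $\pi(x) \in \widehat{\Lambda} = \wL$ for $\mu$-a.e.\ $x$. Equivariance $\pi(T_g x) = \pi(x) + \tau(g) = R_g \pi(x)$ is immediate from $T_g f_\lambda = \lambda(g) f_\lambda$ and $\tau(g)(\lambda) = \lambda(g)$; combining this with the unique ergodicity of $(\wL, m_{\wL}, R)$ (noted in \cref{sec:background_kronecker}) forces $\pi_*\mu = m_{\wL}$. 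Finally, $v_\lambda \circ \pi = f_\lambda$, so the pulled-back $L^2(m_{\wL})$ contains every eigenfunction of $\bX$ and is spanned by them, which matches the definition of the Kronecker factor in \cref{sec:MPS} and completes the identification.
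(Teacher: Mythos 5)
Your proposal is correct, and parts (i) and (ii) coincide with the paper's argument almost verbatim (the paper outsources measure-preservation of the restriction map to \cite[Lemma 2.7]{ll}, where you prove it directly via surjectivity and uniqueness of Haar measure). The only real divergence is in part (iii), and it is a matter of execution rather than of strategy: both arguments hinge on the same key step of choosing unimodular eigenfunctions $f_\lambda$, $\lambda\in\Lambda$, multiplicatively, i.e.\ $f_{\lambda+\mu}=f_\lambda f_\mu$. The paper justifies this by ``arguing similarly to \cite[Theorem 3.4]{walters}'', then defines the multiplicative isometry $V(v_\lambda)=f_\lambda$ on $L^2(m_{\wL})$ and invokes \cite[Theorem 2.4]{walters} to convert the resulting measure-algebra homomorphism into a spatial factor map. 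You instead make the normalization explicit as a cocycle-splitting statement (the extension $1\to S^1\to U\to\Lambda\to 1$ of abelian groups splits because $S^1$ is divisible, so $\operatorname{Ext}^1(\Lambda,S^1)=0$), and then build the factor map directly by $\pi(x)(\lambda):=f_\lambda(x)$, using countability of $\Lambda$ to control the null sets and unique ergodicity of $(\wL,m_{\wL},R)$ to get $\pi_*\mu=m_{\wL}$; you verify the two defining properties of the Kronecker factor by hand rather than appealing, as the paper does, to part (ii) and maximality among group-rotation factors. What your route buys is a self-contained spatial construction that avoids the measure-algebra machinery (at the mild cost of the a.e.\ bookkeeping and the unique-ergodicity input, which the paper has already recorded in \cref{sec:background_kronecker}); what the paper's route buys is brevity by leaning on standard references. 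Either way the mathematical content is the same, and your identification of the cocycle compatibility as the main obstacle is exactly the point the paper's citation of Walters is meant to address.
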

\begin{proof}
\noindent (i) For $\lambda \in \Lambda$ and $x \in \widehat{\Lambda}$, we have 
\[v_\lambda(x + \tau(g))= (x + e_g)(\lambda) = x(\lambda) \lambda(g) = \lambda(g) v_\lambda(x).
\]
This shows that $\lambda$ is an eigenvalue of $(\wL, m_{\wL}, R)$ and $v_\lambda$ is a corresponding eigenvector. 

Conversely, suppose $\chi \in \widehat{G}$ and there exists non-zero $f \in L^2(\wL)$ such that for all $g \in G$, $f(x + \tau(g)) = \chi(g) f(x)$ for almost all $x$, we need to show that $\chi \in \Lambda$. Since $f$ is not zero, there exists $\lambda \in \Lambda$ such that $\widehat{f}(\lambda) \neq 0$.
Computing the Fourier coefficients of both sides, we have
\[
\chi(g) \widehat{f}(\lambda) = e_g(\lambda) \widehat{f}(\lambda) = \lambda(g) \widehat{f}(\lambda)  \]
for any $g \in G$. Since $\widehat{f}(\lambda) \neq 0$, this implies that $\chi(g) = \lambda(g)$ for any $g \in G$. Therefore, $\chi = \lambda \in \Lambda$. Furthermore, this also shows that $f$ has exactly one non-zero Fourier coefficient and $f = \widehat{f}(\lambda) v_\lambda$.

\noindent (ii) Define $\pi : \wL_2 \to \wL_1$ by $\pi(x) = x|_{\Lambda_1}$ for all $x \in \wL_2$. Then $\pi$ is a surjective, continuous group homomorphism. By \cite[Lemma 2.7]{ll}, $\pi$ is measure-preserving.

Recall that the homomorphisms from $G$ to $\wL_1$ and $\wL_2$ are $\tau_1 (g) = e_g|_{\Lambda_1}$ and $\tau_2 (g) = e_g|_{\Lambda_2}$. It is clear that 
\[
\pi( x + \tau_2(g)) = \pi(x) + \tau_1(g),
\]
thus showing that $\pi$ is a factor map.

\noindent (iii) We assume (see Section \ref{sec:MPS}) that $L^2(\mu)$ is separable.  For each $\lambda \in \Lambda = \calE(\bX)$, there is an eigenvector $f_\lambda \in L^2(\bX)$ such that $T_g f_\lambda = \lambda(g) f_\lambda$ for any $g \in G$. Arguing similarly to \cite[Theorem 3.4]{walters}, we may assume that $|f_\lambda|=1$ and $f_{\lambda \xi} = f_\lambda f_\xi$ for any $\lambda, \xi \in \Lambda$. Defining $V(v_\lambda) = f_\lambda$ and extending $V$ linearly, we have an isometry $V: L^2(\wL) \rightarrow L^2(\bX)$ satisfying $V(fg) = V(f) V(g)$ for any $f, g \in L^2(\wL)$. By \cite[Theorem 2.4]{walters}, $V$ induces a homomorphism of measure algebras, and therefore a factor map $\bX \rightarrow \wL$. Since $\calE(\wL) = \Lambda$, part (ii) shows that $\wL$ is the largest group rotation that is a factor of $\bX$.
\end{proof}

\section{Radon-Nikodym densities} \label{sec:rn}



In this section we make no assumption on the countability (or uncountability) of $G$.  In particular, the lemmas here will apply when $G$ is an arbitrary discrete abelian group.

\subsection{Definition of Radon-Nikodym densities} Let $K$ be a compact abelian group and $\tau: G \to K$ be a homomorphism such that $\tau(G)$ is dense in $K$. We describe a way to transfer a function $f : G \to [0,1]$ to a function $\rho: K \to [0,1]$ with the aid of invariant means. This construction follows the proof of \cite[Lemma 2.5]{griesmer-dense-set}  (cf.~Section 4 of \cite{bg}); it will be used in the proofs of Theorems \ref{th:main-partition} and \ref{thm:B+C+A_i_discrete}.  

\begin{definition} \label{def:rn}
Let $f: G \rightarrow [0,1]$ and let $\nu$ be an invariant mean on $G$. 
The \textit{Radon-Nikodym density} associated with $f$ and $\nu$
is a Borel measurable function $\rho_{f}^{\nu}:K\to [0,1]$ satisfying 
\begin{equation}
\label{eqn:lambda_2}
    \nu((h \circ \tau) \cdot f) = \int_K h \cdot \rho_f^{\nu} \, d m_K. 
\end{equation} for every continuous $h:K\to \mathbb C$.  It is unique up to $m_K$-measure $0$.
\end{definition}
Thus $\rho_{f}^{\nu}$ depends on the compact group $K$ and the map $\tau$. When $f=1_A$ is the characteristic function of a subset of $G$, we write $\rho_{A}^{\nu}$ in place of $\rho_{1_A}^{\nu}$ to avoid nested subscripts.

Given an invariant mean $\nu$ on $G$, and $f:G\to [0,1]$ we will prove that there is a function $\rho_{f}^{\nu}$ satisfying Definition \ref{def:rn}.   We first observe the following.
\begin{lemma}
\label{lem:uniqueness-Bohr}
For all $h \in C(K)$, we have
    \begin{equation} \label{eq:density-obs1}
        \nu(h \circ \tau) = \int_{K} h \, dm_{K}. 
    \end{equation}
\end{lemma}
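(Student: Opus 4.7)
The plan is to use the uniqueness of Haar measure on $K$. The map $\Lambda : C(K) \to \mathbb{C}$ defined by $\Lambda(h) := \nu(h \circ \tau)$ is a positive linear functional on $C(K)$ with $\Lambda(1_K) = \nu(1_G) = 1$. By the Riesz representation theorem, there is a unique Borel probability measure $\mu$ on $K$ representing $\Lambda$, and the goal reduces to showing $\mu = m_K$. Since Haar measure is the unique translation-invariant probability measure on $K$, it suffices to prove that $\mu$ is translation-invariant.

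First I would show that $\mu$ is invariant under translation by every element of $\tau(G)$. Given $g \in G$ and $h \in C(K)$, let $R_g h(x) := h(x + \tau(g))$. A direct computation gives
\[
(R_g h \circ \tau)(g') = h(\tau(g + g')) = (h \circ \tau)_{-g}(g'),
\]
using the paper's convention $f_t(s) = f(s - t)$. Since $\nu$ is an invariant mean, $\nu((h \circ \tau)_{-g}) = \nu(h \circ \tau)$, which translates to $\int_K R_g h \, d\mu = \int_K h \, d\mu$. Thus $\mu$ is invariant under each translation $x \mapsto x + \tau(g)$.

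Next I would extend this invariance to all of $K$ using density of $\tau(G)$ in $K$. Fix $y \in K$ and a net $(g_\alpha)$ in $G$ with $\tau(g_\alpha) \to y$. For any $h \in C(K)$, uniform continuity of $h$ on the compact group $K$ implies $R_{\tau(g_\alpha)} h \to R_y h$ uniformly, so passing to the limit in $\int_K R_{\tau(g_\alpha)} h \, d\mu = \int_K h \, d\mu$ yields $\int_K R_y h \, d\mu = \int_K h \, d\mu$. Hence $\mu$ is translation-invariant on $K$, so $\mu = m_K$ and the identity $\nu(h \circ \tau) = \int_K h \, dm_K$ follows.

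There is no real obstacle here: the argument is just the standard Weyl-type equidistribution principle packaged for invariant means. The only minor care needed is the bookkeeping in the identity $R_g h \circ \tau = (h \circ \tau)_{-g}$ and the (routine) uniform-continuity step used to pass from $\tau(G)$-invariance to full $K$-invariance of $\mu$.
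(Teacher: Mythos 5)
Your proof is correct and follows essentially the same route as the paper: apply the Riesz representation theorem to $h \mapsto \nu(h\circ\tau)$, use invariance of the mean to get invariance of the representing measure under translations by $\tau(G)$, extend to all of $K$ via density of $\tau(G)$ and continuity of $x \mapsto h_x$ in $C(K)$, and conclude by uniqueness of Haar measure. The only differences are cosmetic sign/notation choices in the identity $R_g h \circ \tau = (h\circ\tau)_{-g}$.
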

\begin{proof}
We define a linear functional $L$ on $C(K)$ by
\[
L(h) :=   \nu(h \circ \tau).
\]
 By the Riesz representation theorem, there exists a regular Borel probability measure $m$ on $K$ such that $L(h) = \int_{K} h \, dm$. On the other hand, for any $g \in G$, we have  
\begin{equation}\label{eqn:taugTranslate}
L(h_{\tau(g)}) = \nu((h \circ \tau)_g) = \nu(h \circ \tau) = L(h) 
\end{equation}
by translation invariance of $\nu$. Since the map $x \mapsto h_x$ from $K$ to $C(K)$ is continuous, and since $\tau(G)$ is dense in $K$, (\ref{eqn:taugTranslate}) implies $L(h_x)=L(h)$ for all $x \in K$. Hence $m$ is translation invariant. By uniqueness of the Haar measure, we have $m=m_{K}$ as desired. 
\end{proof}

Given $f:G\to [0,1]$, we define a linear functional $\Lambda_f^{\nu}: C(K) \to \R$ by
\begin{equation}
\label{eqn:Lambda_A}
    \Lambda_f^{\nu} (h) := \nu((h \circ \tau) \cdot f). 
\end{equation}
Clearly $\Lambda_f^{\nu}$ is a positive linear functional; thus by the Riesz representation theorem, there exists a regular Borel measure $m$ on $K$ such that
\begin{equation}\label{eqn:Lambdafnu}
    \Lambda_f^{\nu} (h) = \int_K h \, d m
\end{equation}
for all $h \in C(K)$. 

\begin{lemma}\label{lem:AbsCont}
The measure $m$ defined by (\ref{eqn:Lambdafnu}) is absolutely continuous with respect to the Haar probability measure $m_K$ on $K$, and in fact $m(B)\leq m_K(B)$ for all Borel sets $B\subset K$.
\end{lemma}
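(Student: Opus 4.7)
The plan is to exploit the hypothesis $0 \leq f \leq 1$ together with positivity of $\nu$ to obtain the pointwise comparison $\Lambda_f^{\nu}(h) \leq \nu(h\circ\tau)$ for nonnegative $h\in C(K)$, and then invoke \cref{lem:uniqueness-Bohr} to identify the right side with $\int_K h\, dm_K$. This gives the Borel inequality $m \leq m_K$ on open sets, which then extends to all Borel sets by outer regularity.

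First I would observe that for any $h \in C(K)$ with $h \geq 0$, the function $(h\circ\tau)\cdot f$ on $G$ satisfies $0 \leq (h\circ\tau)\cdot f \leq h\circ\tau$ pointwise, because $0\leq f\leq 1$. Positivity of $\nu$ then yields
\[
    \Lambda_f^{\nu}(h) = \nu((h\circ\tau)\cdot f) \leq \nu(h\circ\tau) = \int_K h\, dm_K,
\]
where the last equality is \cref{lem:uniqueness-Bohr}. Rewriting the left side using \eqref{eqn:Lambdafnu} gives $\int_K h\, dm \leq \int_K h\, dm_K$ for every nonnegative $h \in C(K)$.

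Next I would promote this inequality from continuous functions to indicator functions of open sets. If $U\subset K$ is open, Urysohn's lemma produces an increasing sequence $h_n\in C(K)$ with $0\leq h_n \leq 1$ and $h_n \uparrow 1_U$ pointwise. Applying the previous inequality to each $h_n$ and passing to the limit via the monotone convergence theorem yields $m(U)\leq m_K(U)$. Finally, for an arbitrary Borel set $B\subset K$, outer regularity of the finite Borel measures $m$ and $m_K$ on the compact Hausdorff group $K$ gives
\[
    m(B) = \inf\{m(U): U\supset B,\ U\ \text{open}\} \leq \inf\{m_K(U): U\supset B,\ U\ \text{open}\} = m_K(B),
\]
which is the desired conclusion (and trivially implies absolute continuity of $m$ with respect to $m_K$).

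No step here poses a serious obstacle; the only mild subtlety is that the Riesz representation theorem delivers $m$ only as a functional on $C(K)$, so some care is needed to pass from test functions to Borel sets. This is handled cleanly by Urysohn's lemma and outer regularity, both of which are available because $K$ is compact Hausdorff.
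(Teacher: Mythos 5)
Your first step is exactly the paper's: from $0\le f\le 1$ and positivity of $\nu$ you get $\int_K h\,dm=\nu((h\circ\tau)\cdot f)\le\nu(h\circ\tau)=\int_K h\,dm_K$ for nonnegative $h\in C(K)$ (your restriction to $h\ge 0$ is correct and needed). The gap is in the passage from continuous functions to open sets. You claim that for an arbitrary open $U\subset K$, Urysohn's lemma produces an increasing \emph{sequence} $h_n\in C(K)$ with $0\le h_n\le 1$ and $h_n\uparrow 1_U$ pointwise. This is false when $K$ is not metrizable: if such a sequence existed, then $K\setminus U$ would be the zero set of the continuous function $\sum_n 2^{-n}h_n$, hence a closed $G_\delta$. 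But this lemma is stated (and used) for general compact abelian $K$ -- Section \ref{sec:rn} explicitly makes no countability assumptions, and in the proof of \cref{th:main-partition} one takes $K=bG$, which is not metrizable; there, open sets such as the complement of a single point have non-$G_\delta$ complements, so no such sequence exists and the monotone convergence step has nothing to apply to. (If $K$ were metrizable your argument would be fine, e.g.\ via $h_n(x)=\min(1,n\,d(x,K\setminus U))$.)

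The repair is routine, and there are two natural ways to do it. One is to replace the sequence by a supremum: since the Riesz measure $m$ and the Haar measure $m_K$ are Radon, inner regularity on open sets gives $m(U)=\sup\{\int_K h\,dm:\ h\in C(K),\ 0\le h\le 1_U\}$ (for each closed $V\subset U$ Urysohn gives $h$ with $1_V\le h\le 1_U$), and then your inequality on continuous functions yields $m(U)\le m_K(U)$, after which outer regularity finishes as you wrote. The other is the paper's route, which avoids open sets as an intermediate step altogether: given a Borel set $B$ and $\epsilon>0$, use regularity of both $m$ and $m_K$ to find a closed $V$ and an open $U$ with $V\subset B\subset U$ and $m(U\setminus V),\,m_K(U\setminus V)<\epsilon$, take a single Urysohn function $h$ with $1_V\le h\le 1_U$, and chain $m(B)\le m(V)+\epsilon\le\int_K h\,dm+\epsilon\le\int_K h\,dm_K+\epsilon\le m_K(U)+\epsilon\le m_K(B)+2\epsilon$. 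Either repair makes your argument complete; as written, the sequence-plus-MCT step fails precisely in the non-metrizable setting the lemma is for.
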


\begin{proof}
First, by \eqref{eq:density-obs1}, we have
\begin{equation} 
\label{eq:cont}
\int_K h \, d m = \nu((h \circ \tau) \cdot f) \leq \nu(h \circ \tau) = \int_K h \, d m_K 
\end{equation}
for any $h \in C(K)$.

Let $B$ be any Borel set in $K$. By regularity of $m$ and $m_K$, there is an open set $U$, a closed set $V$, such that $V \subset B \subset U$, $m( U \setminus V ) < \epsilon$ and $m_K( U \setminus V ) < \epsilon$. By Urysohn's lemma, there exists a continuous function $h : K \rightarrow [0,1]$ such that $h=1$ on $V$ and $h=0$ on $U^c$. Applying \eqref{eq:cont}, we have
\[
m(B) \leq m(V) + \epsilon \leq \int_{K} h \, dm + \epsilon \leq \int_{K} h \, dm_K + \epsilon 
\leq m_K(U) +  \epsilon \leq m_K(B) + 2 \epsilon.
\]
Since $\epsilon$ is arbitrary, this implies that $m(B) \leq m_K(B)$.
Therefore, $m$ is absolutely continuous with respect to $m_K$. 
\end{proof}

We now prove that, for each $f:G\to [0,1]$, there is a $\rho_{f}^{\nu}$ satisfying (\ref{eqn:lambda_2}). Given such an $f$, we consider the measure $m$ on $K$ defined above.  Since $m$ is absolutely continuous with respect to $m_K$, we may define $\rho_{f}^{\nu}$ to be the Radon-Nikodym derivative of $m$ with respect to $m_K$, meaning $\rho_{f}^{\nu}$ is the unique (up to $m_K$-measure $0$) function in $L^1(m_K)$ satisfying $\int h\, \rho_{f}^{\nu} \, dm_K = \int h\, dm$ for all $h\in C(K)$.  Then (\ref{eqn:lambda_2}) follows from (\ref{eqn:Lambda_A}) and (\ref{eqn:Lambdafnu}).  The inequality $0\leq \rho_{f}^{\nu}\leq 1$ $m_K$-a.e.~follows from the fact that $0\leq m(B)\leq m_K(B)$ for all Borel sets $B.$

\subsection{Properties of \texorpdfstring{$\rho_{A}^{\nu}$}{rho1A}}
We will now state some properties of $\rho^\nu _{f}$ when $f$ is the characteristic function of a set.  Recall that we write $\rho_{A}^{\nu}$ in place of $\rho_{1_A}^{\nu}$.

\begin{lemma}
\label{lem:supp_rho_A}
Let $A \subset G$ and let $\nu$ be an invariant mean on $G$. Then 
\begin{enumerate}
    \item[(i)] $\int_K \rho_{A}^{\nu} \, d m_K = \nu(1_A)$,
    \item[(ii)] $\rho_{A}^{\nu}$ is supported on $\overline{\tau(A)}$, that is, $\rho_{A}^{\nu} = 0$ $m_K$-a.e. on $K \setminus \overline{\tau(A)}$.
\end{enumerate}
\end{lemma}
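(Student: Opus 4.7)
For part (i), the plan is to simply substitute the constant function $h\equiv 1$ (which is continuous on $K$) into the defining identity (\ref{eqn:lambda_2}). The left-hand side becomes $\nu(1\cdot 1_A)=\nu(1_A)$ while the right-hand side becomes $\int_K \rho_A^{\nu}\,dm_K$, yielding the claim immediately.

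For part (ii), set $U:=K\setminus \overline{\tau(A)}$, which is open. The goal is to show $\int_U \rho_A^{\nu}\,dm_K=0$; since $\rho_A^{\nu}\geq 0$ $m_K$-a.e.\ (proven in the construction above), this will force $\rho_A^{\nu}=0$ $m_K$-a.e.\ on $U$. The key observation is that if $h\in C(K)$ is any continuous function supported in $U$, then $h$ vanishes on $\overline{\tau(A)}$, so in particular $h(\tau(g))=0$ for every $g\in A$. Hence $(h\circ\tau)\cdot 1_A\equiv 0$ on $G$, and applying $\nu$ gives $\nu((h\circ\tau)\cdot 1_A)=0$. By the defining relation (\ref{eqn:lambda_2}), this means $\int_K h\cdot \rho_A^{\nu}\,dm_K=0$ for every continuous $h$ supported in $U$.

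To promote this into vanishing of the integral over $U$ itself, I would use Urysohn's lemma together with inner regularity of $m_K$. Explicitly, for any compact $C\subset U$, Urysohn's lemma produces $h\in C(K)$ with $0\leq h\leq 1$, $h\equiv 1$ on $C$, and $h\equiv 0$ on $\overline{\tau(A)}$. Then $\int_C \rho_A^{\nu}\,dm_K\leq \int_K h\cdot \rho_A^{\nu}\,dm_K=0$, so $\int_C \rho_A^{\nu}\,dm_K=0$ for every compact $C\subset U$. Inner regularity (applied to the finite measure $\rho_A^{\nu}\,dm_K$) then gives $\int_U \rho_A^{\nu}\,dm_K=0$, which completes the proof.

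Neither step contains a genuine obstacle; the only subtlety is to remember that the defining relation (\ref{eqn:lambda_2}) is only stated for continuous test functions $h$, which forces the Urysohn plus regularity maneuver in (ii) rather than a direct substitution of an indicator function. Everything else is formal manipulation of the definitions.
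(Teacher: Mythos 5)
Your proposal is correct and follows essentially the same route as the paper: part (i) is the definition applied to $h\equiv 1$, and part (ii) tests the defining identity against Urysohn functions vanishing on $\overline{\tau(A)}$ and invokes regularity of $m_K$ (the paper phrases this as a contradiction with a closed positive-measure subset of $K\setminus\overline{\tau(A)}$, while you integrate over compact subsets and use inner regularity, which is only a cosmetic difference).
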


\begin{proof}
The first claim follows from the definition of $\rho_{A}^{\nu}$. 
For the second claim, let $h: K \to \R_{\geq 0}$ be any continuous function that is supported on $K \setminus \overline{\tau(A)}$. 
If $g \in A$, then $\tau(g) \in \tau(A)$ will not be in the support of $h$. In other words, $h \circ \tau \cdot 1_A(g) = 0$ for all $g \in G$, and so
\begin{equation} 
\label{eq:urysohn}
    \int_K h \cdot \rho_{A}^{\nu} \, d m_K = \nu((h \circ \tau) \cdot 1_{A}) = 0. 
\end{equation}
Suppose for a contradiction that there exists a Borel set $V \subset K \setminus \overline{\tau(A)}$ with $m_K(V) >0$ such that $\rho_{A}^{\nu} >0$ on $V$. Since $m_K$ is regular, we may assume that $V$ is closed. By Urysohn's lemma, there is a continuous function $h: K \rightarrow [0,1]$ that is equal to $1$ on $V$ and $0$ on $\overline{\tau(A)}$. Then \eqref{eq:urysohn} implies that $\int_V \rho_{A}^{\nu} dm_K = 0$, a contradiction.
\end{proof}

\begin{lemma}
\label{cor:sum_rho_1}
Let $G = \bigcup_{i=1}^r A_i$ be a partition of $G$ and let $\nu$ be an invariant mean on $G$. Then 
\begin{equation*}
    \sum_{i=1}^r \rho_{A_i}^{\nu} (x) = 1
\end{equation*}
for $m_K$-almost every $x$.
\end{lemma}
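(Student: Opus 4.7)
The plan is to exploit the linearity of the Radon-Nikodym construction together with the identity $\sum_{i=1}^r 1_{A_i} = 1_G$, and then appeal to Lemma \ref{lem:uniqueness-Bohr} (equation \eqref{eq:density-obs1}) to identify the limit with the constant function $1$.

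First, I would fix an arbitrary continuous function $h \in C(K)$ and compute
\[
\sum_{i=1}^r \int_K h \cdot \rho_{A_i}^{\nu} \, dm_K = \sum_{i=1}^r \nu\bigl((h\circ\tau)\cdot 1_{A_i}\bigr) = \nu\!\left((h\circ\tau)\cdot \sum_{i=1}^r 1_{A_i}\right) = \nu(h\circ\tau),
\]
using the defining property \eqref{eqn:lambda_2} of $\rho_{A_i}^{\nu}$ in the first equality, linearity of $\nu$ in the second, and the partition hypothesis $\sum_{i=1}^r 1_{A_i} = 1_G$ in the third. By Lemma \ref{lem:uniqueness-Bohr}, $\nu(h\circ\tau)=\int_K h\, dm_K$, so
\[
\int_K h \cdot \left(\sum_{i=1}^r \rho_{A_i}^{\nu} - 1\right) dm_K = 0 \qquad \text{for all } h\in C(K).
\]

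Next, since $C(K)$ is dense in $L^1(m_K)$ (in fact, it separates points of $L^\infty(m_K)$ via this pairing), the function $\sum_{i=1}^r \rho_{A_i}^{\nu} - 1 \in L^1(m_K)$ must vanish $m_K$-almost everywhere, giving the desired equality. There is no substantive obstacle here; the only thing to be mindful of is that the functions $\rho_{A_i}^\nu$ are only defined up to $m_K$-null sets, so the equality $\sum_i \rho_{A_i}^\nu = 1$ can only be asserted $m_K$-almost everywhere, which is exactly the statement of the lemma.
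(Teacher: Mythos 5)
Your proposal is correct and follows essentially the same route as the paper: apply the defining property of $\rho_{A_i}^{\nu}$ and linearity of $\nu$ to get $\int_K h\bigl(\sum_i \rho_{A_i}^{\nu}\bigr)\,dm_K=\nu(h\circ\tau)=\int_K h\,dm_K$ for all $h\in C(K)$ via Lemma \ref{lem:uniqueness-Bohr}, then conclude by density of $C(K)$. No gaps; the closing remark about the densities being defined only up to null sets matches the statement's ``$m_K$-almost every $x$'' formulation.
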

\begin{proof}
Since $\sum_{i = 1}^r 1_{A_i} = 1$, for any $h \in C(K)$, 
\[
    \int_K h \left(\sum_{i=1}^r \rho_{A_i}^{\nu} \right) \, d m_K = \sum_{i=1}^r \nu(h \circ \tau \cdot  1_{A_i}) = \nu(h \circ \tau) = \int_K h \, d m_K
\]
where the last equality comes from \cref{lem:uniqueness-Bohr}. Since $C(K)$ is dense in $L^1(m_K)$, this implies that $\sum_{i=1}^r \rho_{A_i}^{\nu} = 1$ almost everywhere.
\end{proof}

\subsection{Relation between \texorpdfstring{$\rho_{A}$}{rho1A} and \texorpdfstring{$\rho_{\phi(A)}$}{rho1phiA}}

Let $G=A_1\cup \cdots \cup A_r$.  Our proof of Theorem \ref{th:main-partition} relies on a correspondence principle relating $\phi_1(A_i) + \phi_2(A_i)-\phi_2(A_i)$ to a convolution of the form $1_{\tilde{\phi}_1(B_i)}*1_{\tilde{\phi}_2(B_i)}*1_{\tilde{\phi}_2(-B_i)}$ on a compact abelian group $K$.  To prove such a correspondence principle, 
we need Lemma \ref{lem:SurjectiveRho} and Corollary \ref{cor:right_containment}, which specify the relationship between the Radon-Nikodym  densities of $1_A$ and $1_{\phi(A)}$.  In order to make the relevant issues apparent, the next lemma takes place in slightly greater generality than we need for our application.




\begin{lemma}\label{lem:SurjectiveRho}
Let $G$ and $H$ be discrete abelian groups and let $\phi:G\to H$ be a surjective homomorphism.
Let $K_1$, $K_2$ be compact abelian groups and $\tau_1: G\to K_1$, $\tau_2:H\to K_2$ be homomorphisms with dense images. Suppose $\tilde{\phi}:K_1\to K_2$ is a continuous surjective homomorphism such that 
\begin{enumerate}[label=(\roman*)]
    \item $\tilde{\phi} \circ\tau_1 = \tau_2\circ \phi$, and
    \item\label{item:AnnoyingClosure}  for all $\chi\in \widehat{K}_1$, if there is a $\psi\in \widehat{H}$ such that $\chi\circ \tau_1=\psi\circ \phi$, then there is a $\chi'\in \widehat{K}_2$ such that $\psi = \chi'\circ \tau_2$ (see Diagram \eqref{eq:tik_ii}).
\end{enumerate}  
Let $f: H\to [0,1]$ and let $\nu$ be an invariant mean on $G$. Let $\rho_{f \circ \phi}^{\nu}: K_1 \to [0,1]$ and $\rho_f^{\phi_* \nu}: K_2 \to [0,1]$ be the associated Radon-Nikodym densities as in \cref{def:rn}. Then 
\begin{equation*}
    \rho_{f\circ\phi}^\nu = (\rho_f^{\phi_*\nu})\circ \tilde{\phi}
\end{equation*}
$m_{K_1}$-almost everywhere.
\vspace{-1em}
\begin{center}
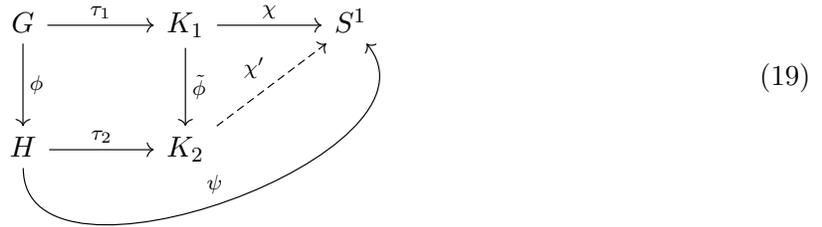

\begin{equation}\label{eq:tik_ii}
\begin{tikzcd}[column sep=large, row sep = large]
G \arrow[r, "\tau_1"] \arrow[d, "\phi"] & K_1 \arrow[r, "\chi"] \arrow[d, "\tilde{\phi}"] & S^1 \\
H \arrow[r, "\tau_2"] \arrow[urr, bend right = 110, "\psi"] & K_2 \arrow[ru, dashed, "\chi'"] & 
\end{tikzcd}
\end{equation}
\vspace{-4em}
\captionof{figure}{Illustration of \ref{item:AnnoyingClosure}}
\end{center}

\end{lemma}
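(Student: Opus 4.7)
The plan is to prove the identity $\rho^\nu_{f\circ\phi}= (\rho^{\phi_*\nu}_f)\circ \tilde\phi$ as elements of $L^2(m_{K_1})$ by comparing Fourier coefficients against every character $\chi\in \widehat{K_1}$. Both functions take values in $[0,1]$, hence lie in $L^2(m_{K_1})$, and since $\widehat{K_1}$ is an orthonormal basis of this space it suffices to verify
\[
\int_{K_1} \overline{\chi}\cdot\rho^\nu_{f\circ\phi}\,dm_{K_1} = \int_{K_1} \overline{\chi}\cdot\bigl(\rho^{\phi_*\nu}_f\circ \tilde\phi\bigr)\,dm_{K_1}
\]
for each $\chi\in \widehat{K_1}$. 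By \eqref{eqn:lambda_2}, the left-hand side equals $\nu(\overline{\chi\circ \tau_1}\cdot (f\circ\phi))$. For the right-hand side, $\tilde\phi_* m_{K_1}$ is a translation-invariant probability measure on $K_2$ (because $\tilde\phi$ is a continuous surjective homomorphism between compact groups), so $\tilde\phi_* m_{K_1} = m_{K_2}$ by uniqueness of the Haar measure. Expanding $\rho^{\phi_*\nu}_f$ in its $L^2$ Fourier series along $\widehat{K_2}$ therefore shows that the right-hand side equals $\int_{K_2}\overline{\chi'}\cdot \rho^{\phi_*\nu}_f\,dm_{K_2}$ whenever $\chi=\chi'\circ \tilde\phi$ for some $\chi'\in \widehat{K_2}$, and vanishes otherwise.

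I would then split into two cases. First, suppose $\chi\circ\tau_1=\psi\circ\phi$ for some $\psi\in\widehat{H}$. Hypothesis \ref{item:AnnoyingClosure} provides $\chi'\in\widehat{K_2}$ with $\psi=\chi'\circ\tau_2$; combined with hypothesis (i) and the density of $\tau_1(G)$ in $K_1$, the two continuous characters $\chi$ and $\chi'\circ\tilde\phi$ agree on all of $K_1$, so $\chi=\chi'\circ\tilde\phi$. In this case
\[
\nu\bigl(\overline{\chi\circ\tau_1}\cdot(f\circ\phi)\bigr)=\nu\bigl((\overline{\psi}\,f)\circ\phi\bigr)=\phi_*\nu\bigl(\overline{\chi'\circ\tau_2}\cdot f\bigr)=\int_{K_2}\overline{\chi'}\cdot\rho^{\phi_*\nu}_f\,dm_{K_2},
\]
which matches the right-hand side computed above.

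In the remaining case, $\chi\circ\tau_1$ does not factor through $\phi$; set $\eta:=\chi\circ\tau_1$ and pick $n_0\in\ker\phi$ with $\eta(n_0)\neq 1$. Since $f\circ\phi$ is invariant under translation by $n_0$, the translation invariance of $\nu$ yields
\[
\nu\bigl(\overline{\eta}\cdot(f\circ\phi)\bigr)=\overline{\eta(n_0)}\,\nu\bigl(\overline{\eta}\cdot(f\circ\phi)\bigr),
\]
which forces this Fourier coefficient to vanish. The right-hand side vanishes as well, since a representation $\chi=\chi'\circ\tilde\phi$ would give $\chi\circ\tau_1=\chi'\circ\tau_2\circ\phi$ by (i), contradicting the assumption. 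The main subtlety, and the reason the somewhat awkward hypothesis \ref{item:AnnoyingClosure} is needed, is the potential existence of a character $\chi\in\widehat{K_1}$ whose pullback $\chi\circ\tau_1$ factors through $\phi$ but whose quotient character $\psi$ on $H$ fails to extend continuously to $K_2$; without (ii) such a character would produce a nonzero Fourier coefficient on the left with no counterpart on the right, breaking the identification.
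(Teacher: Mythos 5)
Your proposal is correct and follows essentially the same route as the paper's proof: both compare Fourier coefficients in $L^2(m_{K_1})$, kill the coefficients at characters whose pullback does not factor through $\phi$ (resp.\ through $\tilde{\phi}$) by translating by a kernel element, and use hypothesis \ref{item:AnnoyingClosure} together with density of $\tau_1(G)$ to identify the remaining coefficients with those of $\rho_f^{\phi_*\nu}$. The only cosmetic difference is that you organize the case split by whether $\chi\circ\tau_1$ factors through $\phi$ (and justify the pushforward identity $\tilde{\phi}_*m_{K_1}=m_{K_2}$ explicitly), while the paper splits on whether $\chi$ is a pullback from $\widehat{K}_2$; these are equivalent under the stated hypotheses.
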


\begin{remark}\
\begin{itemize}
    \item The surjectivity of $\phi$ is required for $\phi_*\nu$ to be an invariant mean on $H$, and thus for $\rho_f^{\phi_*\nu}$ to be defined on $K_2$.
    
    \item The assumption \ref{item:AnnoyingClosure} is satisfied by the groups we use in the proof of Theorem \ref{th:main-partition}; namely $K_1$ will be the Bohr compactification of $G$, $K_2$ will be $\tilde{\phi}(K_1)$, which will coincide with the Bohr compactification $bH$ of $H$, and $\tau_2:H\to K_2$ will be the usual embedding of $H$ into $bH$.
\end{itemize}

\end{remark}

\begin{proof}

We will prove that
\begin{equation}\label{eqn:FourierEqual}
\widehat{\rho_{f\circ\phi}^\nu}=\widehat{(\rho_f^{\phi_* \nu})\circ \tilde{\phi}}.
\end{equation} We first identify some characters of $G$ which are orthogonal to $f\circ \phi$.

\begin{claim}\label{claim:PsiForm}
Let $\psi\in \widehat{G}$.  Then $\nu((f\circ \phi) \cdot \overline{\psi})=0$ unless $\psi=\psi'\circ \phi$ for some $\psi'\in \widehat{H}$.

Similarly, if $\chi\in \widehat{K}_1$, and $h\in L^2(m_{K_2})$, then $\widehat{h\circ \tilde{\phi}}(\chi)=0$ unless $\chi=\chi'\circ \tilde{\phi}$ for some $\chi'\in\widehat{K}_2$.
\end{claim}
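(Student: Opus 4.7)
The plan is to exploit the kernel-invariance of $f\circ\phi$ and $h\circ\tilde{\phi}$, combined with the translation invariance of $\nu$ and $m_{K_1}$, to force the relevant Fourier coefficients to be annihilated unless the character in question is trivial on the corresponding kernel. Once triviality on the kernel is known, the character descends to the quotient, and surjectivity of $\phi$ (resp.\ $\tilde{\phi}$) identifies this quotient with $H$ (resp.\ $K_2$).

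For the first statement, fix $\psi \in \widehat{G}$ and $k \in \ker\phi$. Since $\phi(g+k)=\phi(g)$ and $\psi(g+k)=\psi(g)\psi(k)$, translation invariance of $\nu$ yields
\[
\nu((f\circ\phi)\overline{\psi}) \;=\; \nu\bigl((f\circ\phi)(\cdot+k)\,\overline{\psi}(\cdot+k)\bigr) \;=\; \overline{\psi(k)}\,\nu((f\circ\phi)\overline{\psi}).
\]
If $\psi|_{\ker\phi}$ is nontrivial, choose $k$ with $\psi(k)\neq 1$ to conclude $\nu((f\circ\phi)\overline{\psi})=0$. Otherwise, $\psi$ is trivial on $\ker\phi$, and since $\phi$ is surjective, the first isomorphism theorem $H \cong G/\ker\phi$ produces a character $\psi' \in \widehat{H}$ with $\psi = \psi'\circ\phi$, as required.

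For the second statement, the same idea works with Haar measure. For $\chi\in\widehat{K}_1$ and $z\in\ker\tilde{\phi}$, the substitution $x\mapsto x-z$ in the defining integral of $\widehat{h\circ\tilde{\phi}}(\chi)$, combined with $\tilde{\phi}(x-z)=\tilde{\phi}(x)$ and the translation invariance of $m_{K_1}$, gives
\[
\widehat{h\circ\tilde{\phi}}(\chi) \;=\; \chi(z)\,\widehat{h\circ\tilde{\phi}}(\chi).
\]
Thus either the coefficient vanishes or $\chi$ is trivial on $\ker\tilde{\phi}$. In the latter case, $\chi$ descends to a homomorphism on $K_1/\ker\tilde{\phi}$; because $\tilde{\phi}:K_1\to K_2$ is a continuous surjection between compact Hausdorff groups, it is a quotient map, so the descended map $\chi'\colon K_2\to S^1$ with $\chi = \chi'\circ\tilde{\phi}$ is continuous, i.e.\ $\chi'\in\widehat{K}_2$.

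No genuine obstacle is anticipated here; the only point that requires care is the continuity of $\chi'$ in the second part, which uses the standard fact that a continuous surjection between compact Hausdorff abelian groups is an open quotient map, so characters trivial on the kernel are in bijection with characters of the image. The argument is essentially a two-fold application of the principle ``characters not orthogonal to a kernel-invariant function must themselves be kernel-invariant,'' executed once for an invariant mean and once for Haar measure.
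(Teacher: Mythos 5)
Your proof is correct and follows essentially the same route as the paper: translation invariance of $\nu$ (resp.\ $m_{K_1}$) together with kernel-invariance of $f\circ\phi$ (resp.\ $h\circ\tilde{\phi}$) forces the coefficient to equal $\overline{\psi(k)}$ (resp.\ $\chi(z)$) times itself, so it vanishes unless the character is trivial on the kernel and hence factors through $\phi$ (resp.\ $\tilde{\phi}$). The only difference is cosmetic: the paper argues contrapositively and dismisses the second statement with ``proved similarly,'' while you additionally make explicit the continuity of $\chi'$ via the compact-Hausdorff quotient argument, which is a welcome but equivalent elaboration.
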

  To see this, assume $\psi\in \widehat{G}$ does not have  the form $\psi'\circ \phi$ for some $\psi'\in \widehat{H}$.  Then there is a $g\in \ker \phi$ such that $\psi(g)\neq 1$.\footnote{Supposing $\psi(g)=1$ for all $g\in \ker{\phi}$, we define a character $\psi'$ on $H$ by $\psi'(\phi(g))=\psi(g)$.  This is well defined, since $\phi(g)=\phi(g')$ implies $\psi(g)=\psi(g')$.  To check that $\psi'(h+h')=\psi'(h)\psi'(h')$, choose $g,g'$ so that $\phi(g)=h$ and $\phi(g')=h'$, and evaluate $\psi'(h+h')$ as $\phi(g+g')=\phi(g)\phi(g')=\psi'(\phi(g))\psi'(\phi(h))$.} We then have
\begin{align*}
    \nu((f\circ \phi)\cdot \overline{\psi}) &= \nu\bigl(((f\circ \phi)\cdot \overline{\psi})_{g}\bigr)\\
    &= \nu\bigl((f\circ \phi)\cdot (\overline{\psi})_{g}\bigr)\\
    &= \psi(g) \nu\bigl((f\circ \phi)\cdot \overline{\psi}\bigr). 
\end{align*}
So $\nu((f\circ \phi)\cdot \overline{\psi}) = \psi(g)\nu((f\circ \phi)\cdot \overline{\psi})$,
which means $\widehat{f\circ\phi}(\psi)=0$, since $\psi(g)\neq 1$.  This proves the first statement in the claim, and the second statement is proved similarly.

\begin{claim}\label{claim:ChiForm}
Let $\chi\in \widehat{K}_1$. Then $\widehat{\rho^{\nu}_{f\circ \phi}}(\chi) = 0$ unless $\chi=\chi'\circ \tilde{\phi}$ for some $\chi'\in \widehat{K}_2$.
\end{claim}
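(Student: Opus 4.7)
The plan is to unravel $\widehat{\rho^{\nu}_{f\circ \phi}}(\chi)$ through the defining identity of the Radon-Nikodym density and reduce to the character statement already established in Claim \ref{claim:PsiForm}. Specifically, applying \eqref{eqn:lambda_2} to $f \circ \phi$ with test function $h = \overline{\chi}$ gives
\[
\widehat{\rho^{\nu}_{f\circ \phi}}(\chi) = \int_{K_1} \overline{\chi} \cdot \rho^{\nu}_{f\circ \phi}\, dm_{K_1} = \nu\bigl((f\circ \phi) \cdot \overline{\chi \circ \tau_1}\bigr).
\]
Since $\chi \circ \tau_1$ is a character of $G$, the first part of Claim \ref{claim:PsiForm} applies: this quantity vanishes unless $\chi \circ \tau_1 = \psi' \circ \phi$ for some $\psi' \in \widehat{H}$.

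Assuming $\widehat{\rho^{\nu}_{f\circ \phi}}(\chi) \neq 0$, we therefore have a $\psi' \in \widehat{H}$ with $\chi \circ \tau_1 = \psi' \circ \phi$. This is exactly the hypothesis of condition \ref{item:AnnoyingClosure} of the lemma, so we obtain a $\chi' \in \widehat{K}_2$ with $\psi' = \chi' \circ \tau_2$. Combining with $\tilde{\phi} \circ \tau_1 = \tau_2 \circ \phi$ from hypothesis (i),
\[
\chi \circ \tau_1 = \psi' \circ \phi = \chi' \circ \tau_2 \circ \phi = (\chi' \circ \tilde{\phi}) \circ \tau_1.
\]
Thus the characters $\chi$ and $\chi' \circ \tilde{\phi}$ of $K_1$ agree on the dense subgroup $\tau_1(G) \subset K_1$; by continuity they agree on $K_1$, giving $\chi = \chi' \circ \tilde{\phi}$ as desired.

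There is no real obstacle here, as the work has already been done in Claim \ref{claim:PsiForm} and in the structural assumption \ref{item:AnnoyingClosure}; the only substantive step is the translation from a statement about the Fourier coefficient of $\rho^{\nu}_{f\circ \phi}$ at a character of $K_1$ to a statement about the Fourier coefficient of $f\circ \phi$ (with respect to $\nu$) at a character of $G$, which is immediate from Definition \ref{def:rn}. The resulting claim then feeds into \eqref{eqn:FourierEqual} in the remainder of the proof of Lemma \ref{lem:SurjectiveRho}, where the analogous statement for $(\rho^{\phi_*\nu}_f) \circ \tilde{\phi}$ (the second part of Claim \ref{claim:PsiForm}) shows that both sides have Fourier support contained in characters of the form $\chi' \circ \tilde{\phi}$, on which the two Fourier coefficients can be matched directly.
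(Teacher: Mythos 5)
Your proof is correct and follows essentially the same route as the paper: expand $\widehat{\rho^{\nu}_{f\circ\phi}}(\chi)$ via the defining identity of the Radon--Nikodym density, invoke Claim \ref{claim:PsiForm} to force $\chi\circ\tau_1=\psi\circ\phi$, then use hypothesis \ref{item:AnnoyingClosure} together with $\tilde{\phi}\circ\tau_1=\tau_2\circ\phi$ and the density of $\tau_1(G)$ to conclude $\chi=\chi'\circ\tilde{\phi}$. No issues.
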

To prove this claim, let $\chi\in \widehat{K}_1$.  Then
\[
    \widehat{\rho_{f\circ\phi}^\nu}(\chi)  = \int_{K_1} \rho_{f\circ\phi}^\nu\, \overline{\chi}\, dm_{K_1} 
         = \nu \left( \left( f\circ \phi \right) \cdot \left( \overline{\chi}\circ \tau_1 \right) \right).
\]
By Claim \ref{claim:PsiForm}, the above evaluates to $0$ unless $\chi\circ \tau_1=\psi \circ \phi$ for some $\psi\in \widehat{H}$.  Choosing such a $\psi$, we have
\[
    \widehat{\rho_{f\circ\phi}^\nu}(\chi) = \nu((f\circ \phi)\cdot (\overline{\psi} \circ \phi)) = \phi_*\nu(f\overline{\psi}).
\]
By assumption \ref{item:AnnoyingClosure}, we may write $\psi$ as $\chi'\circ \tau_2$ for some $\chi'\in \widehat{K}_2$.  Then $\chi\circ \tau_1 = (\chi'\circ \tau_2)\circ \phi = \chi'\circ  \tilde{\phi}\circ \tau_1$.  So $\chi \circ \tau_1 = \chi'\circ \tilde{\phi} \circ \tau_1$.  The denseness of $\tau_1(G)$ in $K_1$ and continuity of $\chi$ then implies $\chi = \chi'\circ \tilde{\phi}$.  This shows that $\widehat{\rho^{\nu}_{f\circ \phi}}(\chi) = 0$ unless $\chi=\chi'\circ \tilde{\phi}$ for some $\chi'\in \widehat{K}_2$.

We now prove equation (\ref{eqn:FourierEqual}).

\noindent \textbf{Case 1:} $\chi=\chi'\circ \tilde{\phi}$ for some $\chi'\in \widehat{K}_2$. Then
\begin{align*}
    \widehat{\rho_{f\circ\phi}^\nu}(\chi) & = \int_{K_1} \rho_{f\circ\phi}^\nu\, \overline{\chi}\, dm_{K_1} \\
    & = \nu \left( \left( f\circ \phi \right) \cdot \left( \overline{\chi}\circ \tau_1 \right) \right) && \text{by definition of } \rho_{f\circ \phi}^{\nu}\\
    & = \nu\left( (f\circ \phi) \cdot  \left( \overline{\chi'\circ \tilde{\phi}}\circ \tau_1 \right) \right)\\
    &= \nu\left( \left( f\circ \phi \right) \cdot \left(  \overline{\chi'\circ \tau_2}\circ \phi\right) \right)\\
    &= \phi_* \nu\left(f\, \cdot  \overline{\chi'\circ \tau_2}\right)\\
    &= \int_{K_2} \rho_f^{\phi_* \nu} \overline{\chi'}\, dm_{K_2}\\
    &= \int_{K_1} \left( \rho_f^{\phi_*\nu} \circ \tilde{\phi} \right) \cdot \left( \overline{\chi'}\circ \tilde{\phi} \right)\, dm_{K_1}\\
    &= \widehat{ \rho_f^{\phi_*\nu} \circ \tilde{\phi} }(\chi).
\end{align*}

\noindent \textbf{Case 2:} $\chi\neq \chi'\circ \tilde{\phi}$ for all $\chi'\in \widehat{K}_2$.  In this case, Claim \ref{claim:PsiForm} implies  $\widehat{(\rho_f^{\phi_* \nu})\circ \tilde{\phi}}(\chi)=0$ and Claim \ref{claim:ChiForm} implies $\widehat{\rho_{f\circ\phi}^\nu}(\chi)=0$. 
\end{proof}

\begin{corollary}
\label{cor:right_containment}
Let $G$ be a discrete abelian group, $\nu$  an invariant mean on $G$ and $\phi: G \to G$  an endomorphism. 
Let $K$ be a compact abelian group, $\tau: G \to K$ a homomorphism with dense image, and $\tilde{\phi}: K \to K$ an endomorphism such that $\tilde{\phi} \circ \tau = \tau \circ \phi$. Assume further that for all $\chi\in \widehat{K}$, if there is a $\psi\in \widehat{G}$ such that $\chi\circ \tau=\psi\circ \phi$, then there is a $\chi'\in \widehat{K}$ such that $\psi = \chi'\circ \tau$. Let $H = \phi(G)$, $A\subset G$, and let $\rho_{A}^{\nu}: K \to [0, 1]$ and $\rho_{\phi(A)}^{\phi_* \nu}: \tilde{\phi}(K) \to [0, 1]$ be the associated Radon-Nikodym densities. 
Then
\begin{equation*}
    0 \leq \rho_{A}^{\nu} \leq \rho_{\phi(A)}^{\phi_* \nu} \circ \tilde{\phi}
\end{equation*}
$m_{K}$-almost everywhere.
\end{corollary}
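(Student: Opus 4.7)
The plan is to reduce the statement to \cref{lem:SurjectiveRho} by corestricting $\phi$ to its image. Let $H := \phi(G)$ and write $\bar{\phi}: G \to H$ for the surjective homomorphism obtained from $\phi$ by restricting its codomain to $H$. I would apply \cref{lem:SurjectiveRho} with $K_1 := K$, $K_2 := \tilde{\phi}(K)$ (a closed, hence compact, subgroup of $K$), $\tau_1 := \tau$, and $\tau_2 := \tau|_H$. The image $\tau_2(H) = \tilde{\phi}(\tau(G))$ is dense in $K_2$ because $\tilde{\phi}$ is continuous and $\tau(G)$ is dense in $K$, while $\tilde{\phi} \circ \tau_1 = \tau_2 \circ \bar{\phi}$ is immediate from $\tilde{\phi} \circ \tau = \tau \circ \phi$.

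The main obstacle is verifying hypothesis \ref{item:AnnoyingClosure} of \cref{lem:SurjectiveRho} in this setup. Suppose $\chi \in \widehat{K}$ and $\psi_H \in \widehat{H}$ satisfy $\chi \circ \tau_1 = \psi_H \circ \bar{\phi}$. I first extend $\psi_H$ to a character $\psi \in \widehat{G}$; this is possible because $S^1$ is a divisible abelian group, so the restriction map $\widehat{G} \to \widehat{H}$ is surjective. Since $\phi(g) \in H$ for every $g \in G$, one has $\psi \circ \phi = \psi_H \circ \bar{\phi}$, and thus $\chi \circ \tau = \psi \circ \phi$. The Corollary's standing assumption now produces $\chi' \in \widehat{K}$ with $\psi = \chi' \circ \tau$. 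Setting $\chi_2 := \chi'|_{K_2} \in \widehat{K_2}$, for every $h \in H$,
\[
\chi_2(\tau_2(h)) = \chi'(\tau(h)) = \psi(h) = \psi_H(h),
\]
which gives $\psi_H = \chi_2 \circ \tau_2$, as required.

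Having verified the hypotheses, I apply \cref{lem:SurjectiveRho} to $f := 1_{\phi(A)}: H \to \{0, 1\}$. Since $\bar{\phi}_* \nu$ coincides with $\phi_* \nu$ as a mean on $H$, and $1_{\phi(A)} \circ \bar{\phi} = 1_{A + \ker \phi}$, the lemma yields
\[
\rho_{A + \ker \phi}^{\nu} = \rho_{\phi(A)}^{\phi_* \nu} \circ \tilde{\phi} \qquad m_K\text{-a.e.}
\]
To conclude, I would record a monotonicity principle for Radon-Nikodym densities: if $0 \leq f_1 \leq f_2$ are bounded functions on $G$, then $\rho_{f_1}^{\nu} \leq \rho_{f_2}^{\nu}$ $m_K$-a.e. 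This follows directly from \cref{def:rn} by testing against arbitrary nonnegative $h \in C(K)$. Applied with $f_1 = 1_A$ and $f_2 = 1_{A + \ker \phi}$, this gives $\rho_A^{\nu} \leq \rho_{\phi(A)}^{\phi_* \nu} \circ \tilde{\phi}$ $m_K$-a.e., completing the proof.
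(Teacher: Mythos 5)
Your proof is correct and follows the same strategy as the paper: apply \cref{lem:SurjectiveRho} with $f = 1_{\phi(A)}$, then use $1_{\phi(A)} \circ \phi = 1_{\phi^{-1}(\phi(A))} \geq 1_A$ and monotonicity of the Radon--Nikodym construction. The genuine value added in your write-up is the explicit verification of hypothesis (ii) of \cref{lem:SurjectiveRho}, which the paper's proof of the corollary glosses over (the paper only carries out this check in the paragraph preceding the proof of \cref{th:main-partition}, and only for the specific case $K = bG$, $K_2 = \tilde{\phi}(bG)$ used there). Your reduction from ``$\psi_H \in \widehat{H}$'' to ``$\psi \in \widehat{G}$'' via the divisibility of $S^1$, followed by restricting the resulting $\chi' \in \widehat{K}$ to $K_2 = \tilde{\phi}(K)$ (after observing $\tau(\phi(G)) \subset \tilde{\phi}(K)$), is exactly the translation needed, and your computation $\chi_2(\tau_2(h)) = \chi'(\tau(h)) = \psi(h) = \psi_H(h)$ closes the gap cleanly. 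The remaining pieces (surjectivity of $\tilde{\phi}: K \to K_2$, density of $\tau_2(H)$ in $K_2$, and $\bar{\phi}_*\nu = \phi_*\nu$ on $\ell^\infty(H)$) are all correctly handled. In short: same route as the paper, with a rigor gap filled.
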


\begin{proof}
Applying \cref{lem:SurjectiveRho} for $H = \phi(G)$ and $f = 1_{\phi(A)}: H \to [0,1]$, we get
\begin{equation*}
\rho_{1_{\phi(A)} \circ \phi}^{\nu} = \rho_{1_{\phi(A)}}^{\phi_* \nu} \circ \tilde{\phi}.
\end{equation*}
Since $1_{\phi(A)} \circ \phi = 1_{\phi^{-1}(\phi(A))} \geq 1_A$, we have 
\begin{equation*}
    \rho_{1_{\phi(A)}\circ \phi}^{\nu} \geq \rho_{A}^{\nu}.
\end{equation*}
It follows that $    \rho_{1_A}^{\nu} \leq \rho_{1_{\phi(A)}}^{\phi_* \nu} \circ \tilde{\phi}$, meaning
\begin{equation*}
    \rho_{A}^{\nu} \leq \rho_{\phi(A)}^{\phi_*\nu}\circ \tilde{\phi}. \qedhere
\end{equation*}

\end{proof}

\section{Reducing correlation sequences to integrals in compact groups}
\label{sec:reducing_to_integrals}

The goal of this section is to show that certain averages for ergodic $G$-systems can be reduced to double integrals on a compact group. \cref{lem:IKronecker} establishes this for group rotations on a compact abelian group $K$, as long as some endomorphisms on $G$ can be extended to all of $K$.

\begin{lemma}\label{lem:IKronecker}
Let $K$ be a compact abelian group  and let $\tau:G\to K$ be a homomorphism with dense image. Let $\phi_1, \phi_2, \phi_3: G\to G$ be endomorphisms. Suppose there are continuous endomorphisms $\tilde{\phi}_i: K \to K$ such that $\tilde{\phi_i}\circ \tau = \tau \circ \phi_i$ for $1 \leq i \leq 3$. Then for all  bounded measurable  $f_1, f_2, f_3 :K\to \mathbb C$, we have
\begin{align*}
    I(\vec{f}, \vec{\phi}) &:= UC-\lim_{g \in G} \int_K f_1(z+\tau(\phi_1(g)))f_2(z+\tau(\phi_2(g)))f_3(z+\tau(\phi_3(g)))\, dm_K(z) \\
    &= \iint_{K^2} f_1(z+\tilde{\phi}_1(t))f_2(z+\tilde{\phi}_2(t))f_3(z+\tilde{\phi}_3(t))\, dm_K(z)\, dm_K(t).
\end{align*}
\end{lemma}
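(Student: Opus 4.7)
The plan is to reduce the statement to the well-distribution identity \eqref{eq:well-distributed} by packaging the integrand as a single continuous function on $K$. Define $F : K \to \mathbb{C}$ by
\[
F(t) := \int_K f_1(z+\tilde{\phi}_1(t)) f_2(z+\tilde{\phi}_2(t)) f_3(z+\tilde{\phi}_3(t))\, dm_K(z).
\]
The hypothesis $\tilde{\phi}_i \circ \tau = \tau \circ \phi_i$ gives $F(\tau(g)) = \int_K f_1(z+\tau(\phi_1(g))) f_2(z+\tau(\phi_2(g))) f_3(z+\tau(\phi_3(g)))\, dm_K(z)$, so the left-hand side of the claimed identity is $UC$-$\lim_{g\in G} F(\tau(g))$, while Fubini rewrites the right-hand side as $\int_K F(t)\, dm_K(t)$. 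Thus the lemma reduces to verifying that $F$ is continuous and invoking \eqref{eq:well-distributed}.

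For continuity of $F$, it suffices to prove that the auxiliary function $H: K^3 \to \mathbb{C}$ defined by $H(x_1,x_2,x_3) := \int_K f_1(z+x_1) f_2(z+x_2) f_3(z+x_3)\, dm_K(z)$ is continuous, since each $\tilde{\phi}_i$ is continuous and $F(t) = H(\tilde{\phi}_1(t),\tilde{\phi}_2(t),\tilde{\phi}_3(t))$. This is a standard approximation argument: on a compact abelian group, translation $x \mapsto f(\cdot + x)$ is continuous from $K$ to $L^p(m_K)$ for any $1 \le p < \infty$ and $f \in L^p(m_K)$ (verified by approximating $f$ by elements of $C(K)$, which are uniformly continuous). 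Since the $f_i$ are bounded, they lie in $L^3(m_K)$; writing the difference $H(x_1,x_2,x_3) - H(y_1,y_2,y_3)$ as a telescoping sum of three terms in which one factor is replaced at a time, Hölder's inequality (with exponents $3,3,3$) bounds each term by a product of the form $\|f_i(\cdot+x_i)-f_i(\cdot+y_i)\|_3 \prod_{j\ne i}\|f_j\|_\infty$. These go to zero as $(x_1,x_2,x_3) \to (y_1,y_2,y_3)$.

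With $F$ continuous, \eqref{eq:well-distributed} yields $UC$-$\lim_{g\in G} F(\tau(g)) = \int_K F(t)\, dm_K(t)$, which is precisely the identity claimed. There is no genuine obstacle here: the only substantive input beyond formal manipulation is the $L^p$-continuity of translations on a compact abelian group, which is routine. The result is essentially the statement that the orbit $(\tilde{\phi}_1(\tau(g)), \tilde{\phi}_2(\tau(g)), \tilde{\phi}_3(\tau(g)))$ equidistributes along the diagonal subgroup $\{(\tilde{\phi}_1(t),\tilde{\phi}_2(t),\tilde{\phi}_3(t)) : t \in K\} \subset K^3$ tested against a specific family of continuous functionals; the density of $\tau(G)$ in $K$ is what makes this equidistribution hold.
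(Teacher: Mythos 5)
Your argument is correct. It differs from the paper's proof only in how the merely measurable $f_1,f_2,f_3$ are handled before invoking the well-distribution identity \eqref{eq:well-distributed}, which is the crux in both arguments. The paper observes that $I(\vec f,\vec\phi)$ is multilinear and $L^2(m_K)$-continuous in the $f_i$, reduces to the case where each $f_i$ is a character of $K$, and then computes the averages explicitly, applying \eqref{eq:well-distributed} to the continuous function $t\mapsto\prod_i\chi_i(\tilde\phi_i(t))$; this is a short Fourier-analytic computation, though it implicitly requires commuting the $L^2$-approximation with the $UC$-limit (the approximation is uniform in $g$, so this is harmless). You instead keep the $f_i$ as they are and show directly that $F(t)=\int_K f_1(z+\tilde\phi_1(t))f_2(z+\tilde\phi_2(t))f_3(z+\tilde\phi_3(t))\,dm_K(z)$ is continuous, using the standard $L^p$-continuity of translation on a compact group together with a telescoping/H\"older estimate, after which \eqref{eq:well-distributed} applies verbatim and simultaneously gives existence of the $UC$-limit. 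Your route avoids Fourier expansion and the approximation interchange altogether, at the cost of the (routine) translation-continuity lemma; note also that since the right-hand side is an iterated integral with the $z$-integration inside, it is literally $\int_K F\,dm_K$, so the appeal to Fubini can be dispensed with (only joint Borel measurability of $(z,t)\mapsto f_i(z+\tilde\phi_i(t))$ would need a word if one insisted on the product-measure reading). Both proofs are complete and of comparable length.
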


\begin{proof}
	Since $I(\vec{f},\vec{\phi})$ is continuous in $f_i$ (with respect to the $L^2(m_K)$-norm) and multilinear in $f_i$, it suffices to prove the identity when each $f_i$ is a character $\chi_i$ of $K$.  In this case we have
	\begin{align*}
	I(\chi_1,\chi_2,\chi_3,\vec{\phi})&= UC-\lim_{g \in G}\int_K \chi_1\chi_2\chi_3(z)\prod_{i=1}^3 \chi_i(\tau(\phi_i(g)))\, dm_K(z)\\
	&= UC-\lim_{g \in G} \int_K \chi_1 \chi_2 \chi_3(z) \prod_{i=1}^3 \chi_i \circ \tilde{\phi}_i(\tau(g))\, dm_K(z).
	\end{align*}
By \eqref{eq:well-distributed}, we have
\begin{align*}
    I(\chi_1, \chi_2, \chi_3, \vec{\phi}) &= \iint_{K^2} \chi_1 \chi_2 \chi_3(z) \prod_{i=1}^3 \chi_i \circ \tilde{\phi}_i(t) \, d m_K(z) d m_K(t) \\
    &= \iint_{K^2} \prod_{i=1}^3 \chi_i(z + \tilde{\phi}_i(t)) \, d m_K(z) d m_K(t),
\end{align*}
and this finishes our proof.
\end{proof}

The next proposition deals with a general ergodic $G$-system $\bX$. The compact group in question will be an extension $K$ of the group $Z$ underlying Kronecker factor of $\bX$, constructed to be invariant under the corresponding $\tilde{\phi}_i$, as required by Lemma \ref{lem:IKronecker}.

\begin{proposition} \label{prop:reduce_to_integral}
Given an ergodic measure preserving $G$-system $\mb X=(X,\mu, T)$ and $f:X\to [0,1]$, define $I:G\to \mathbb R_{\geq 0}$ by
\[
I(w):= UC-\lim_{g \in G} \int_X f \cdot T_{\phi_3(g)}f \cdot T_{w-\phi_2(g)} f\, d\mu,
\]
where $\phi_2, \phi_3: G\to G$ are endomorphisms such that $\phi_2, \phi_3, \phi_2 + \phi_3$ have finite index images in $G$.

	Then there are a compact abelian group $K$, a homomorphism $\tau: G\to K$ with dense image, endomorphisms $\tilde{\phi}_2, \tilde{\phi}_3:K\to K$ and $\tilde{f}:K\to [0,1]$ with $\int_K \tilde{f}\, dm_K = \int_X f\, d\mu$ such that for all $w \in G$,
	\begin{equation}\label{eqn:NewI}
	I(w) = \iint_{K^2} \tilde{f}(z)\tilde{f}(z+\tilde{\phi}_3(t))\tilde{f}(z+\tau(w)-\tilde{\phi_2}(t))\, dm_K(z)\, dm_K(t).
	\end{equation}
	Furthermore, $[K:\tilde{\phi}_i(K)] \leq [G:\phi_i(G)]$ for each $i \in \{2, 3\}$ and $[K: (\tilde{\phi}_2 + \tilde{\phi}_3)(K)] \leq [G: (\phi_2 + \phi_3)(G)]$.
\end{proposition}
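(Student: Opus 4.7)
The strategy is to enlarge the spectrum of $\mb{X}$ to a countable subgroup $\Gamma\subset\widehat{G}$ that is invariant under the dual maps $\phi_j^*$, realize the corresponding group rotation $\mb{K}:=(\widehat{\Gamma},m_{\widehat{\Gamma}},R)$ as the Kronecker factor of an ergodic extension of $\mb{X}$, and then combine the characteristic-factor result of \cite{abb} quoted in \cref{sec:background_kronecker} with \cref{lem:IKronecker}.

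First I would let $\Lambda := \calE(\mb{X})$, which is countable by separability of $L^2(\mu)$, and let $\Gamma$ be the smallest subgroup of $\widehat{G}$ containing $\Lambda$ and invariant under $\phi_j^*(\chi):=\chi\circ\phi_j$ for $j\in\{2,3\}$. Since only countably many operations are iterated, $\Gamma$ is countable. The identity $(\phi_2+\phi_3)^*(\chi)=\phi_2^*(\chi)\cdot\phi_3^*(\chi)$ makes $\Gamma$ automatically invariant under $(\phi_2+\phi_3)^*$. Setting $K:=\widehat{\Gamma}$ with the natural dense embedding $\tau:G\to K$ from \cref{lem:Embeddings}, three applications of \cref{lem:Embeddings}(ii) supply continuous endomorphisms $\tilde{\phi}_2,\tilde{\phi}_3,\widetilde{\phi_2+\phi_3}:K\to K$ satisfying the intertwining relations $\tilde{\phi}_j\circ\tau=\tau\circ\phi_j$ (and analogously for $\widetilde{\phi_2+\phi_3}$), with $[K:\tilde{\phi}_j(K)]\leq[G:\phi_j(G)]$ for $j\in\{2,3\}$ and $[K:\widetilde{\phi_2+\phi_3}(K)]\leq[G:(\phi_2+\phi_3)(G)]$. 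Since $\widetilde{\phi_2+\phi_3}$ and $\tilde{\phi}_2+\tilde{\phi}_3$ are continuous endomorphisms agreeing on the dense subset $\tau(G)$, they coincide on $K$, yielding the stated bound on the index of $(\tilde{\phi}_2+\tilde{\phi}_3)(K)$.

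Next, the group rotation $\mb{Z}:=(\widehat{\Lambda},m_{\widehat{\Lambda}},R)$ is the Kronecker factor of $\mb{X}$ by \cref{lem:rotations}(iii) and is a factor of $\mb{K}$ by \cref{lem:rotations}(ii) (as $\Lambda\subset\Gamma$). I would form the relatively independent joining $\mb{Y}:=\mb{X}\times_{\mb{Z}}\mb{K}$; weak mixing of $\mb{X}$ relative to its Kronecker factor implies $\mb{Y}$ is ergodic with Kronecker factor equal to $\mb{K}$. Lifting $f$ along the projection $\mb{Y}\to\mb{X}$ and setting $\tilde{f}:=\E(f\mid K):K\to[0,1]$ yields $\int_K \tilde{f}\,dm_K=\int_X f\,d\mu$. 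Rewriting $T_{w-\phi_2(g)}=T_{-\phi_2(g)}T_w$ and applying the theorem of \cite{abb} to $\mb{Y}$ with homomorphisms $\phi_3$ and $-\phi_2$ (whose images, together with that of $-\phi_2-\phi_3$, have finite index), together with the fact that $\E(T_w f\mid K)=R_w\tilde{f}$, one obtains
\[
I(w)=UC-\lim_{g\in G}\int_K \tilde{f}(z)\,\tilde{f}(z+\tau(\phi_3(g)))\,\tilde{f}(z+\tau(w)-\tau(\phi_2(g)))\,dm_K(z).
\]
Applying \cref{lem:IKronecker} to the triple of endomorphisms $(0,\phi_3,-\phi_2)$ of $G$ (with continuous extensions $(0,\tilde{\phi}_3,-\tilde{\phi}_2)$ to $K$) and the functions $(\tilde{f},\tilde{f},F)$, where $F(z):=\tilde{f}(z+\tau(w))$, converts this limit into the double integral in \eqref{eqn:NewI}.

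The main obstacle is the middle step: ensuring that $\mb{K}$ is actually the Kronecker factor of the joining $\mb{Y}$ (and hence characteristic for $I(w)$), rather than merely embedding into $\mb{Y}$ as a subfactor. This relies on standard but nontrivial structure theory of measure-preserving systems, in particular on $\mb{X}$ being weakly mixing relative to its Kronecker factor. Once this identification is in hand, the remaining work is a formal assembly of the intertwining identities supplied by \cref{lem:Embeddings} and the integration identity in \cref{lem:IKronecker}, with the index bounds following directly from \cref{lem:Embeddings}(ii).
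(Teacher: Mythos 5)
Your overall route is the same as the paper's: enlarge $\calE(\bX)$ to a countable subgroup $\Gamma\subset\widehat{G}$ invariant under the $\phi_j^*$, take $K=\widehat{\Gamma}$ with the embedding and extended endomorphisms from \cref{lem:Embeddings} (including the index bounds and the identification of $\widetilde{\phi_2+\phi_3}$ with $\tilde{\phi}_2+\tilde{\phi}_3$ by density), pass to a common ergodic extension $\bY$ of $\bX$ and $\bK$, and finish with the characteristic-factor theorem of \cite{abb} plus \cref{lem:IKronecker}. The gap is exactly in the middle step you flag: you justify ergodicity of $\bY:=\bX\times_{\bZ}\bK$ and the identification of its Kronecker factor with $\bK$ by asserting that $\bX$ is weakly mixing relative to its Kronecker factor. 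That assertion is false in general: for $G=\Z$ the skew product $T(x,y)=(x+\alpha,y+x)$ on $\T^2$ with $\alpha$ irrational is ergodic, its Kronecker factor is the rotation $x\mapsto x+\alpha$, and the extension is compact (isometric), not relatively weakly mixing; the Furstenberg--Zimmer theorem gives relative weak mixing only over the maximal distal factor, not over the Kronecker factor. So, as written, the claim that $\bK$ is the Kronecker factor of $\bY$ (and even that $\bY$ is ergodic) is unsupported.

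The conclusion you want is in fact true, but it needs a different argument: writing $\Lambda=\calE(\bX)$, decompose $L^2(\bX\times_{\bZ}\bK)$ into the mutually orthogonal, invariant subspaces indexed by the cosets of $\Lambda$ in $\Gamma$, each spanned by functions $f(x)\chi(k)$ with $\chi$ a fixed coset representative; an invariant (or eigen-) function forces $f$ to be an eigenfunction of $\bX$ with an eigenvalue that is a $\Lambda$-translate of $\overline{\chi\circ\tau}$, which is impossible for nontrivial cosets since $\chi\circ\tau\in\Gamma\setminus\Lambda$ there. This yields ergodicity of $\bY$ and $\calE(\bY)=\Gamma$, hence Kronecker factor $\bK$ by \cref{lem:rotations}. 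Alternatively, you can sidestep the issue entirely as the paper does: take \emph{any} ergodic joining $\bY$ of $\bX$ and $\bK$ (such joinings exist), and note that you do not need $\bK$ to be the full Kronecker factor of $\bY$ — since the Kronecker factor $\bZ$ of $\bX$ is characteristic for these averages and $\bZ\le\bK\le\bY$, the intermediate factor $\bK$ is also characteristic, which is all the computation with $\tilde{f}=\E(f'\mid K)$ requires. The remaining steps of your proposal (handling $T_{w-\phi_2(g)}$ via $T_w$, and converting the limit to the double integral via \cref{lem:IKronecker} with the extensions $\tilde{\phi}_3$ and $-\tilde{\phi}_2$) agree with the paper.
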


\begin{proof}
	Let $\phi_1 = -\phi_2 - \phi_3$. We first prove the special case of the lemma where $\mathcal{E}(\bX)$ is invariant under each $\phi_i$, meaning that for all eigenvalues $\lambda\in \calE(\bX)$ and $i \in \{1, 2,3\}$, we have $\lambda \circ \phi_i\in \mathcal{E}(\bX)$. In this case, the conclusion was also observed in \cite[Remark 3.2]{abb}. By \cite[Section 3]{abb}, the Kronecker factor $(Z, m_Z, R)$ of $(X, \mu, T)$ is characteristic for the average defining $I(w).$ Let $\tau: G \to Z$ be the canonical projection. We can therefore replace $f$ with $\tilde{f} := \E(f|Z)$ without changing $I(w)$:
	\begin{align}
	I(w)&= UC-\lim_{g \in G} \int_Z \tilde{f} \cdot  R_{\phi_3(g)}\tilde{f} \cdot R_{w-\phi_2(g)}\tilde{f}\, dm_Z \nonumber\\
	&= UC-\lim_{g \in G} \int_Z \tilde{f}(z)\tilde{f}(z+\tau(\phi_3(g)))\tilde{f}(z+\tau(w-\phi_2(g)))\, dm_Z(z). \label{eq:IwEg}
	\end{align}
	
In view of Lemma \ref{lem:Embeddings}, let $\tilde{\phi}_i:Z\to Z$ be continuous endomorphisms satisfying $\tau\circ \phi_i=\tilde{\phi}_i\circ \tau$.  Applying this identity to \eqref{eq:IwEg}, we have
	\[
	I(w) = UC-\lim_{g \in G} \int_Z \tilde{f}(z)\tilde{f}(z+\tilde{\phi}_3(\tau(g)))\tilde{f}(z+\tau(w)-\tilde{\phi}_2(\tau(g))) \, dm_Z(z).
	\]
	By Lemma \ref{lem:IKronecker}, we can rewrite the previous line as
	\[
	I(w) =  \iint_{Z^2} \tilde{f}(z)\tilde{f}(z+\tilde{\phi}_3(t))\tilde{f}(z+\tau(w)-\tilde{\phi}_2(t))) \, dm_Z(z) \, dm_Z(t).
	\]
	Taking $K = Z$, we prove the proposition in this special case.
	
	For the general case, let $\Lambda$ be the smallest subgroup of $\widehat{G}$ that contains $\calE(\bX)$ and is closed under each $\phi_i^*$. Since $\calE(\bX)$ is countable, it is easy to see that $\Lambda$ is countable. Let $\bK = (\wL, m_{\wL}, R)$ be the group rotation on $\wL$ described in \cref{lem:rotations}. By part (i) of \cref{lem:rotations}, we have $\calE(\bK) = \Lambda$. Since $\calE(\bZ) = \calE(\bX) \subset \Lambda$, part (ii) of \cref{lem:rotations} implies that $\bZ$ is a factor of $\bK$.
	
We now fix an ergodic $G$-system $\mb{Y} = (Y, \nu, S)$ that is a common extension of $\mb{X}$ and $\mb{K}$. For example, we can take $\mb{Y} = (X \times K, \nu, T \times R)$ to be an ergodic joining of $\mb X$ and $\mb K$.  (For  details about joinings and the existence of ergodic joinings, see Glasner \cite[Section 6]{glasner} or de la Rue \cite[Section 3.1]{delaRue}.) 

	
	Writing $\pi: Y\to X$ for the factor map, we define $f': Y \to \mathbb [0,1]$ to satisfy $f':= f\circ \pi$ and  
	\[
	I'(w):= UC-\lim_{g \in G} \int_Y f' \cdot S_{\phi_3(g)}f' \cdot S_{w-\phi_2(g)} f' \, d\nu.
	\] 
Since $f'$ is a lift from $f$ on $X$, it is obvious that $I' = I$ and the Kronecker factor $\bZ$ of $\bX$ is characteristic for the averages $I'(w)$. Thus any factor of $\bY$ between $\bY$ and $\bZ$ is also characteristic for $I'(w)$. In particular, $\bK$ is characteristic for $I'(w)$.
Now applying an argument similar to the first part of the proof to the factor $\bK$ of $\bY$ and the function $f'$, we obtain the compact group $K = \wL$, the function $\tilde{f} = \E(f'| K)$, and endomorphisms $\tilde{\phi}_i$ satisfying (\ref{eqn:NewI}). Finally, we have $[K:\tilde{\phi}_i(K)] \leq [G:\phi_i(G)]$ for each $i \in \{1, 2, 3\}$ by \cref{lem:Embeddings} (ii).
\end{proof}

\section{First correspondence principle and Bohr sets in \texorpdfstring{$\phi_1(A) + \phi_2(A) + \phi_3(A)$}{phi1(A) + phi2(A) + phi3(A)}}
\label{sec:existence_bohr}

\begin{proposition}
\label{lem:support_bohr_countable}
Let $G$ be a countable abelian group. Let $\phi_1, \phi_2, \phi_3: G \to G$ be commuting endomorphisms with finite index images such that $\phi_1 + \phi_2 + \phi_3 = 0$. Let $(X, \mu, T)$ be an ergodic $G$-system and $f: X \to [0, 1]$ with $\int_X f = \delta > 0$. Define the function $I: G \to [0,1]$ by
\begin{equation*}
   I(w):= UC-\lim_{g \in G} \int_X f \cdot T_{\phi_3(g)} f \cdot T_{w-\phi_2(g)} f\, d\mu.
\end{equation*}
Then $\supp(I)$ contains a Bohr-$(k, \eta)$ set where $k, \eta$ depend only on $\delta$ and the indices of $\phi_i(G)$ in $G$.
\end{proposition}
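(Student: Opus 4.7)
The plan is to transfer $I(w)$ to an integral on a compact abelian group via Proposition \ref{prop:reduce_to_integral}, then apply Theorem \ref{th:ll_compact}(i) to locate a Bohr set in the support of that integral, and finally pull back to $G$ via Lemma \ref{lem:from_kronecker_up}.

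First I would apply Proposition \ref{prop:reduce_to_integral} to the endomorphisms $\phi_2, \phi_3$; the three images $\phi_2(G), \phi_3(G), (\phi_2+\phi_3)(G) = \phi_1(G)$ all have finite index. This yields a compact abelian group $K$, a dense-image homomorphism $\tau : G \to K$, continuous endomorphisms $\tilde{\phi}_2, \tilde{\phi}_3 : K \to K$ with $[K:\tilde{\phi}_j(K)] \leq [G:\phi_j(G)]$ for $j=2,3$, and a measurable $\tilde{f}:K\to[0,1]$ with $\int_K \tilde{f}\,dm_K = \delta$, such that $I(w) = J(\tau(w))$, where
\[
J(y) := \iint_{K^2} \tilde{f}(z)\, \tilde{f}(z+\tilde{\phi}_3(t))\, \tilde{f}(z+y-\tilde{\phi}_2(t))\, dm_K(z)\, dm_K(t).
\]
Defining $\tilde{\phi}_1 := -\tilde{\phi}_2 - \tilde{\phi}_3$, the three endomorphisms $\tilde{\phi}_j$ commute on $K$ (the $\phi_j$ commute on $G$, so the $\tilde{\phi}_j$ commute on $\tau(G)$, hence on $K$ by continuity and density), sum to zero, and have finite-index images with indices controlled by $[G:\phi_j(G)]$. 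Setting $\tilde{A} := \{z \in K : \tilde{f}(z) \geq \delta/2\}$, Markov's inequality gives $m_K(\tilde{A}) \geq \delta/2$, and Theorem \ref{th:ll_compact}(i) applied to $(K, \tilde{A}, \tilde{\phi}_1, \tilde{\phi}_2, \tilde{\phi}_3)$ produces a Bohr-$(k, \eta)$ set $B \subset \tilde{\phi}_1(\tilde{A}) + \tilde{\phi}_2(\tilde{A}) + \tilde{\phi}_3(\tilde{A})$ with $k$ and $\eta$ depending only on $\delta$ and the $[G:\phi_j(G)]$.

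The main obstacle is then to argue $B \subset \supp(J)$: membership of $y$ in the sumset only supplies a single triple $(a_1, a_2, a_3) \in \tilde{A}^3$ with $y = \sum_j \tilde{\phi}_j(a_j)$, whereas $J(y) > 0$ requires the integrand to be positive on a set of positive $m_K \otimes m_K$-measure in $K \times K$. I anticipate the cleanest resolution is Fourier-analytic: expanding $J$ as
\[
J(y) = \sum_{\xi \in \widehat{K}} \hat{J}(\xi)\,\xi(y), \qquad \hat{J}(\xi) = \widehat{\tilde{f}}(\xi) \sum_{\substack{\eta \in \widehat{K} \\ \eta \circ \tilde{\phi}_3 \,=\, \xi \circ \tilde{\phi}_2}} \widehat{\tilde{f}}(\overline{\eta\xi})\, \widehat{\tilde{f}}(\eta),
\]
one verifies $\hat{J}(1) \geq \delta^3$ from the $\eta = \xi = 1$ summand and then bounds the nontrivial contributions on a Bohr set defined by the $O(\delta^{-2})$ characters $\xi$ with $|\widehat{\tilde{f}}(\xi)|$ large (bounded in number by Parseval), in the spirit of the Bogolyubov-type argument underlying Theorem \ref{th:ll_compact}(i) in \cite{ll}. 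Once $B \subset \supp(J)$ is established, Lemma \ref{lem:from_kronecker_up} guarantees that $\tau^{-1}(B) \subset G$ is a Bohr-$(k, \eta)$ set, and $\tau^{-1}(B) \subset \tau^{-1}(\supp J) = \supp I$, completing the proof.
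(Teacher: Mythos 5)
Your outer steps (applying \cref{prop:reduce_to_integral} to $\phi_2,\phi_3$, noting that $(\phi_2+\phi_3)(G)=\phi_1(G)$ has finite index, and pulling the Bohr set back through $\tau$ via \cref{lem:from_kronecker_up}) are exactly what the paper does. The gap is in the compact-group step. The paper does not go through the set-level statement \cref{th:ll_compact}(i): after the change of variables $z\mapsto z+\tilde{\phi}_2(t)$ it invokes the \emph{functional} version, \cite[Proposition 4.3]{ll}, which asserts directly that the support of $\widetilde{w}\mapsto\iint_{K^2}\tilde{f}(z+\tilde{\phi}_2(t))\,\tilde{f}(z-\tilde{\phi}_1(t))\,\tilde{f}(z+\widetilde{w})\,dm_K(z)\,dm_K(t)$ contains a Bohr-$(k,\eta)$ set with $k,\eta$ depending only on $\delta$ and the indices. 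You correctly identify that passing through the level set $\tilde{A}$ loses the positive-measure information needed for $J(y)>0$, but the Fourier patch you propose does not recover it.

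Here is why the patch fails. Unlike Bogolyubov's situation, where the relevant coefficients are $|\widehat{1_A}(\xi)|^4\ge 0$, the coefficients $\hat{J}(\xi)$ are trilinear in $\widehat{\tilde{f}}$ and carry arbitrary phases. Taking $S=\{\xi:|\widehat{\tilde{f}}(\xi)|\ge c\}$ and restricting $y$ to the Bohr set $B(S;\eta)$ only freezes those characters near $1$: even granting that the contribution of $\xi\notin S\cup\{1\}$ is small (which itself needs the finite-index hypothesis to get absolute summability), what you obtain is $J(y)=\hat{J}(1)+\sum_{\xi\in S}\hat{J}(\xi)+(\text{error})$, i.e.\ $J(y)\approx J(0)$, and there is no lower bound on $\sum_{\xi\in S}\hat{J}(\xi)$; it can be negative of size comparable to, and cancel, the main term $\hat{J}(1)\ge\delta^3$. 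Concretely, for $K=\mathbb{T}$ and $(\tilde{\phi}_1,\tilde{\phi}_2,\tilde{\phi}_3)=(1,1,-2)$ the quantity you would need to bound below is essentially a three-term-progression count, which Behrend-type examples show can be far smaller than $\delta^3$; lower-bounding it is the whole difficulty of the theorem, not a routine large-spectrum estimate. This is precisely why \cite{ll} proves the compact statement via a regularity-lemma argument with tower-type bounds (as the introduction of this paper notes); if the naive Bogolyubov-style argument worked it would give polynomial bounds. The fix is simply to cite \cite[Proposition 4.3]{ll}, the functional form behind \cref{th:ll_compact}(i); with that substitution the remainder of your argument goes through and coincides with the paper's proof.
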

\begin{proof}
By \cref{prop:reduce_to_integral}, there exist a compact abelian group $K$ with Haar measure $m_K$, a homomorphism $\tau: G\to K$ with dense image, and endomorphisms $\tilde{\phi}_i:K\to K$, and $\tilde{f}:K\to [0,1]$ with $\int_K \tilde{f}\, dm_K=\int_X f\, d\mu = \delta$ such that
	\begin{equation*}
	I(w) = \iint_{K^2} \tilde{f}(z)\tilde{f}(z+\tilde{\phi}_3(t))\tilde{f}(z+\tau(w)-\tilde{\phi}_2(t))\, dm_K(z)\, dm_K(t).
	\end{equation*}
	Furthermore, $[K:\tilde{\phi}_i(K)] \leq [G:\phi_i(G)]$ for each $i$. Now define $I': K \to [0, 1]$ by
	\begin{equation*}
	    I'(\widetilde{w}) := \iint_{K^2} \tilde{f}(z)\tilde{f}(z+\tilde{\phi}_3(t))\tilde{f}(z+ \widetilde{w} -\tilde{\phi}_2(t))\, dm_K(z)\, dm_K(t).
	\end{equation*}
	By change of variable $z \mapsto z + \tilde{\phi}_2(t)$ and using $\phi_2 + \phi_3 = - \phi_1$, we obtain
	\begin{equation*}
	    I'(\widetilde{w}) = \iint_{K^2} \tilde{f}(z + \tilde{\phi}_2(t) )\tilde{f}(z - \tilde{\phi}_1(t))\tilde{f}(z+ \widetilde{w})\, dm_K(z)\, dm_K(t).
	\end{equation*}
	Applying \cite[Proposition 4.3]{ll}, it follows that $\supp(I')$ contains a Bohr-$(k, \eta)$ set $B$ in $K$ where $k, \eta$ depends only on $\delta$ and the indices $[K:\tilde{\phi}_i(K)]$. It is easy to see that $\supp(I)$ contains $\tau^{-1}(B)$. Moreover,  \cref{lem:from_kronecker_up} implies that $\tau^{-1}(B)$ contains a Bohr-$(k, \eta)$ set in $G$, completing the proof.
\end{proof}



\begin{proposition}[First correspondence principle]
\label{lem:correspondence_funny}
Let $G$ be a countable abelian group and $A \subset G$ with $d^*(A) = \delta > 0$. Let $\phi_1, \phi_2, \phi_3$ be commuting endomorphisms of $G$ with finite index image such that $\phi_1 + \phi_2 + \phi_3 = 0$. Then there is an ergodic $G$-system $\mb{X} := (X, \mu, T)$ and a function $f: X \to [0, 1]$ with $\int_X f \, d\mu = d^*(A)$ such that the function $I: G \to [0,1]$ defined by
\begin{equation*}
   I(w):= UC-\lim_{g \in G} \int_X f \cdot T_{\phi_3(g)} f \cdot T_{w-\phi_2(g)} f\, d\mu
\end{equation*}
satisfies $\phi_3(\supp I) \subset \phi_1(A) + \phi_2(A) + \phi_3(A)$.
\end{proposition}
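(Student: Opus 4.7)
The plan is to apply Furstenberg's correspondence principle to produce the desired ergodic $G$-system $(X,\mu,T)$ and then verify the containment $\phi_3(\supp I) \subset \phi_1(A) + \phi_2(A) + \phi_3(A)$ by an algebraic identity that exploits both $\phi_1+\phi_2+\phi_3 = 0$ and the commutativity of the $\phi_i$.

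First, I would fix an extremal invariant mean $\nu$ on $G$ with $\nu(1_A) = d^*(A)$ (which exists by Bauer's maximum principle, as noted in the background section) and form the compact metric space $X = \{0,1\}^G$ equipped with the shift $G$-action $(T_g x)(h) := x(g+h)$. Setting $\tilde A := \{x \in X : x(0) = 1\}$, the map $t\mapsto x_t$ defined by $x_t(h) := 1_A(h+t)$ is $G$-equivariant (i.e.\ $T_g x_t = x_{t+g}$), and pulling back by this map allows one to define a Borel probability measure $\mu$ on $X$ by $\mu(E) := \nu(\{t\in G : x_t\in E\})$ on cylinder sets. With this construction, a direct unravelling gives, for any $g_1,g_2\in G$,
\[
    \int_X f\cdot T_{g_1} f\cdot T_{g_2} f\, d\mu = \nu(1_{A\cap (A-g_1)\cap (A-g_2)}) \leq d^*(A\cap (A-g_1)\cap (A-g_2)),
\]
where $f := 1_{\tilde A}$ has $\int f\, d\mu = d^*(A)$. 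Extremality of $\nu$, via Lemma~\ref{lem:inv_mean_need_one_extremal}, will yield ergodicity of $(X,\mu,T)$.

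Next, I would take any $w\in\supp I$ and, since $I(w)>0$ is a uniform Ces\`aro average, select some $g\in G$ for which $\int f\cdot T_{\phi_3(g)}f\cdot T_{w-\phi_2(g)} f\, d\mu > 0$. The correspondence established above then guarantees that $A\cap (A - \phi_3(g))\cap (A - w + \phi_2(g))$ has positive upper Banach density and, in particular, contains some element $u$; thus $u$, $u+\phi_3(g)$, and $u+w-\phi_2(g)$ all belong to $A$. The desired containment is then verified via the identity
\begin{align*}
    \phi_1(u) + \phi_2(u+\phi_3(g)) + \phi_3(u+w-\phi_2(g))
    &= (\phi_1+\phi_2+\phi_3)(u) + (\phi_2\phi_3 - \phi_3\phi_2)(g) + \phi_3(w)\\
    &= \phi_3(w),
\end{align*}
where both bracketed sums vanish by hypothesis, exhibiting $\phi_3(w)$ as an element of $\phi_1(A) + \phi_2(A) + \phi_3(A)$.

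The main obstacle I anticipate is securing ergodicity of the Furstenberg system: this forces one to choose $\nu$ to be an \emph{extremal} invariant mean (rather than any invariant mean) and to argue from extremality to the triviality of $G$-invariant events in $X$. The rest of the argument is a transparent unravelling of the correspondence followed by the algebraic identity displayed above, and so presents no real difficulty once the shift conventions are pinned down; notice also that \emph{both} hypotheses ($\phi_1+\phi_2+\phi_3=0$ and commutativity of the $\phi_i$) are essential precisely to make the two error terms in the displayed identity vanish.
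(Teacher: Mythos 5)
Your proposal is correct and follows essentially the same route as the paper: the paper's proof simply invokes the ergodic form of Furstenberg's correspondence principle (citing \cite{bm}) to obtain an ergodic system and a set $E$ with $\mu(E)=d^*(A)$ and $\mu(E\cap T_{w_1}^{-1}E\cap T_{w_2}^{-1}E)\le d^*(A\cap(A-w_1)\cap(A-w_2))$, and then finishes with exactly your identity $\phi_3(w)=\phi_1(u)+\phi_2(u+\phi_3(g))+\phi_3(u+w-\phi_2(g))$, using $\phi_1+\phi_2+\phi_3=0$ and $\phi_2\circ\phi_3=\phi_3\circ\phi_2$. The only difference is that you re-derive the correspondence principle by hand (shift space over an extremal invariant mean, with ergodicity extracted from \cref{lem:inv_mean_need_one_extremal}), which is precisely the content of the black box the paper cites, so nothing essential changes.
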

\begin{proof}
By Furstenberg's correspondence principle (for example, see \cite[Theorem 2.8]{bm}), there exists an ergodic $G$-system $(X, \mu, T)$ and a measurable set $E \subset X$ with $\mu(E) = d^*(A)$ such that for all $w_1, w_2 \in G$,
\begin{equation*}
    \mu(E \cap T_{w_1}^{-1} E \cap T_{w_2}^{-1} E) \leq d^*(A \cap (A - w_1) \cap (A - w_2)).
\end{equation*}
Letting $f = 1_E$, $w_1 = \phi_3(g)$ and $w_2 = w - \phi_2(g)$, we deduce that for all $w$ and $g \in G$,
\begin{equation*}
    \int_X f \cdot T_{\phi_3(g)} f \cdot T_{w - \phi_2(g)} f \, d \mu \leq d^*(A \cap (A - \phi_3(g)) \cap (A - (w - \phi_2(g))).
\end{equation*}
It follows that if $w \in \supp(I)$, then there are $h \in A$ and $g \in G$ such that $h$, $h + \phi_3(g)$, and $h + w - \phi_2(g)$ all belong to $A$. Therefore,
\begin{equation}
\label{eq:phi_123_h}
    \phi_3(w) = \phi_1(h) + \phi_2(h + \phi_3(g)) + \phi_3(h + w - \phi_2(g)) \in \phi_1(A) + \phi_2(A) + \phi_3(A)
\end{equation}
and this finishes our proof.
Note that in \eqref{eq:phi_123_h}, we use the fact that $\phi_2 \circ \phi_3 = \phi_3 \circ \phi_2$. 
\end{proof}

We are ready to prove \cref{th:main-density}.

\begin{proof}[Proof of \cref{th:main-density}]
By \cref{lem:correspondence_funny}, there exists an ergodic $G$-system $(X, \mu, T)$ and $f: X \to [0,1]$ with $\int_X f = d^*(A)$ such that
\begin{equation*}
    I(w) = UC-\lim_{g \in G} \int_X f \cdot T_{\phi_3(g)} f \cdot T_{w-\phi_2(g)} f\, d\mu
\end{equation*}
has $\phi_3(\supp(I)) \subset \phi_1(A) + \phi_2(A) + \phi_3(A)$. 

In view of \cref{lem:support_bohr_countable}, $\supp(I)$ contains a Bohr-$(k, \eta)$ set where $k, \eta$ only depends on $\delta$ and the indices of $\phi_i(G)$ in $G$. \cref{lem:phiB_Bohr} then implies that $\phi_3(\supp(I))$ contains a Bohr-$(k', \eta')$ set where $k', \eta'$ depends only on $\delta$ and the indices mentioned above.
\end{proof}

\section{Second correspondence principle}
\label{sec:second_correspondence} In this section we establish the second correspondence principle \cref{prop:correspondence_principle_bohr}, which is used in the proof of \cref{th:main-partition}.  This can be thought of as a special case of Propositions 3.1 and 3.2 of \cite{bg}. Here we write $bG$ for the Bohr compactification of $G$.


\begin{proposition}[Second correspondence principle]
\label{prop:correspondence_principle_bohr}
Let $K = bG$ and let $\tau: G \to K$ be the natural embedding. Let $A, B \subset G$ and let $\nu, \lambda$ be two invariant means on $G$ where $\lambda$ is extremal. Then $A + B - B$ contains $\tau^{-1}(\supp (\rho_{A}^{\nu} * \rho_{B}^{\lambda} * \rho_{-B}^{\lambda}))$.
\end{proposition}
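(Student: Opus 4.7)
The plan is to translate the convolution $F := \rho_A^\nu * \rho_B^\lambda * \rho_{-B}^\lambda$ on $K = bG$ into an explicit non-negative function on $G$ whose support is manifestly contained in $A + B - B$. Set $\psi(u) := (1_B *_\lambda 1_{-B})(u) = \lambda_s(1_B(s) 1_{-B}(u-s))$: this is positive definite, even, and $\supp(\psi) \subset B - B$, since the integrand vanishes unless $s \in B$ and $s - u \in B$. Next, define $g(w) := \nu_x(1_A(x) \psi(x-w))$; the integrand vanishes unless $x \in A$ and $x - w \in B - B$, which forces $w \in A + B - B$, so $\supp(g) \subset A + B - B$.

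The heart of the argument is to prove $F \circ \tau = g$ on $G$. Applying the defining property of $\rho_A^\nu$ to the continuous test function $y \mapsto \Psi(\tau(w) - y)$, where $\Psi := \rho_B^\lambda * \rho_{-B}^\lambda$, yields $F(\tau(w)) = \nu_x(1_A(x) \Psi(\tau(w-x)))$. I would then identify $\Psi \circ \tau$ with the Bohr-almost-periodic part $\psi_a$ of $\psi$, in the decomposition $\psi = \psi_a + \psi_0$ coming from the atomic/continuous decomposition of the Bochner measure of $\psi$. By the extremal Fubini (\cref{lem:inv_mean_need_one_extremal}) applied to $\lambda$, the Fourier coefficients of $\Psi$ on $bG$ satisfy $\widehat{\Psi}(\chi) = \lambda(1_B \overline{\chi})\lambda(1_{-B}\overline{\chi}) = \lambda(\psi \overline{\chi})$, matching the atomic Bochner coefficients of $\psi$; consequently $\Psi$ agrees with the continuous extension of $\psi_a$ to $bG$, and so $\Psi \circ \tau = \psi_a$. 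A standard Wiener-type argument (combining Wiener's theorem on positive definite functions with a Cauchy-Schwarz estimate) shows that $\psi_0$ is a null function, i.e., $\mu(|\psi_0|) = 0$ for every invariant mean $\mu$ on $G$. Translation invariance of $\nu$ then gives $|g(w) - F(\tau(w))| = |\nu_x(1_A(x) \psi_0(x-w))| \leq \nu(|\psi_0|) = 0$, hence $F \circ \tau = g$ pointwise on $G$.

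Recalling the paper's convention that $\supp$ denotes the set of nonzero values, the conclusion is immediate: if $\tau(w) \in \supp F$, then $F(\tau(w)) \neq 0$, so $g(w) \neq 0$, forcing $w \in \supp(g) \subset A + B - B$.

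The main obstacles are the twin assertions $\Psi \circ \tau = \psi_a$ and the null-function property of $\psi_0$. The extremality of $\lambda$ is indispensable for the Fubini swap that computes the Fourier coefficients of $\Psi$; without extremality the product $\lambda(1_B \overline\chi)\lambda(1_{-B}\overline\chi)$ would not emerge from the iterated integrals. The null-function claim for $\psi_0$ is a subtle but standard harmonic-analysis fact that must be verified for the possibly non-Følner invariant mean $\nu$.
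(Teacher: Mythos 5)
Your proposal is correct and follows essentially the same route as the paper: the heart in both cases is the Bochner--Herglotz decomposition of $1_B*_\lambda 1_{-B}$ into an almost periodic part (identified with $(\rho_B^\lambda*\rho_{-B}^\lambda)\circ\tau$ via the extremal-mean Fubini computation of Fourier coefficients) plus a null part killed by Wiener's lemma, exactly as in the paper's Claim inside the proof of \cref{prop:correspondence_principle_bohr}. The only cosmetic difference is your finish, where you evaluate $\rho_A^\nu*\rho_B^\lambda*\rho_{-B}^\lambda$ at $\tau(w)$ directly from the defining property of $\rho_A^\nu$ applied to the continuous test function $y\mapsto(\rho_B^\lambda*\rho_{-B}^\lambda)(\tau(w)-y)$, which neatly replaces the second Fourier-coefficient comparison that the paper states but omits as ``nearly identical.''
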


\begin{proof}

By \cref{lem:supp_rho_A}, the Radon-Nikodym density $\rho_{A}^{\nu}$ is supported on $\overline{\tau(A)}$. Therefore the convolution $\rho_{A}^{\nu} * \rho_{B}^{\lambda}$, which is defined as 
\[
    \rho_{A}^{\nu} * \rho_{B}^{\lambda} (z) := \int_K \rho_{A}^{\nu}(x) \rho_{B}^{\lambda}(z - x) \, d m_K(x),
\]
is supported on $\overline{\tau(A)} + \overline{\tau(B)} = \overline{\tau(A + B)}$. Similarly $\rho_{A}^{\nu} * \rho_{B}^{\lambda} * \rho_{-B}^{\lambda}$ is supported on $\overline{\tau(A + B - B)}$. This, however, is weaker than the conclusion of \cref{prop:correspondence_principle_bohr} and is insufficient for our purpose.


Define $\phi, \theta: G \to [0, 1]$ by 
\begin{equation*}
    \phi(t) := 1_B *_{\lambda} 1_{-B}(t) := \int_G 1_B(x) 1_{-B}(t - x) \, d \lambda(x)
\end{equation*}
and
\begin{equation*}
    \theta(t) := 1_A *_{\nu} \phi(t) := \int_G 1_A(y) \phi(t-y) \, d \nu(y).
\end{equation*}
We can see that $\theta$ is supported on $A + B - B$. It remains to show that $\theta = (\rho_{A}^{\nu} * \rho_{B}^{\lambda} * \rho_{-B}^{\lambda}) \circ \tau$.

\begin{claim}
\label{claim:decomp_phi}
    $\phi = \eta + \psi$ where $\psi$ is a null function and $\eta:= (\rho_{B}^{\lambda} * \rho_{-B}^{\lambda}) \circ \tau$.
\end{claim}
\begin{proof}[Proof of claim]
One can verify that $\phi$ is positive definite by writing $\sum_{g,h\in G} c_g\overline{c}_h \phi(g-h)$ as $\int_G (\sum_{g} c_g1_B(x-g))\overline{\sum_{h} c_h 1_{B}(x-h)}\, d\lambda(x)= \int_G \bigl|\sum_{g} 1_B(x-g)\bigr|^2\, d\lambda(x)$ for a finite collection of coefficients $c_g\in \mathbb C$. Therefore, by the Bochner-Herglotz Theorem, $\phi$ is the Fourier transform of a positive measure $\sigma$ on $\widehat{G}$. Decomposing $\sigma = \sigma_{d} + \sigma_{c}$ where $\sigma_{d}$ is the discrete component of $\sigma$ and $\sigma_{c}$ is the continuous part, we have 
\begin{equation}\label{eq:discpluscont}     \phi = \hat{\sigma}_{d} + \hat{\sigma}_{c}.
\end{equation}
Since $\sigma_d$ has only countably many atoms, $\hat{\sigma}_{d}$ is an almost periodic function.   
On the other hand, by Wiener's lemma (see \cite[Th\'eor\`eme 16(2)]{godement}), $\int_G |\hat{\sigma}_{c}|^2 \, d \mu = 0$ for all invariant means $\mu$ on $G$. 

Now we will prove that $\hat{\sigma}_d=\eta$.  We first show that $\hat{\sigma}_d$ and $\eta$ are almost periodic functions defined by Fourier series on $G$ with absolutely summable coefficients. To see this for $\hat{\sigma}_d$, we write $\hat{\sigma}_d = \sum_{\chi\in \widehat{G}} \sigma(\{\chi\}) \chi$, where $\sum_{\chi\in \widehat{G}} \sigma(\{\chi\})$ is a convergent sum of nonnegative values. For $\eta$, note that both $\rho_{B}^{\lambda}$ and $\rho_{-B}^{\lambda}$ are in $L^2(m_K)$.  Thus, their Fourier coefficients are square-summable, and the Fourier coefficents of $\rho_B^{\lambda}*\rho_{-B}^{\lambda}$ are absolutely summable.    
To prove that $\hat{\sigma}_d=\eta$, it  therefore suffices to prove that $\hat{\sigma}_d$ and $\eta$ have the same Fourier coefficients.  This is the same as showing that $\phi$ and $\eta$ have the same Fourier coefficients, as the Fourier coefficients of $\hat{\sigma}_c$ are all $0$ (since $\hat{\sigma}_c$ is a null function). So we verify that
\begin{equation*}
    \mu(\phi \overline{\chi}) = \mu(\eta \overline{\chi})
\end{equation*}
for every invariant mean $\mu$ on $G$ and every character $\chi \in \widehat{G}$. Fix the invariant mean $\mu$, characters $\chi \in \widehat{G}$, and $\chi' \in \widehat{K}$ such that $\chi = \chi' \circ \tau$. We then have
\begin{align*}
    \mu(\phi \overline{\chi}) &= \iint_{G^2} 1_B(t) 1_{-B}(s - t) \overline{\chi(s)} \, d \lambda(t) d \mu(s) \\
    &=\iint_{G^2} (1_B \cdot \overline{\chi})(t) \cdot (1_{-B} \cdot \overline{\chi})(s - t) \, d \lambda(t) d \mu(s) \\
    &= \lambda(1_B \cdot \overline{\chi}) \lambda(1_{-B} \cdot \overline{\chi}) \;\;\; \text{(by \cref{lem:inv_mean_need_one_extremal})}\\
    &= \int_K \rho_{B}^{\lambda} \overline{\chi'} \, d m_K \cdot \int_K \rho_{-B}^{\lambda} \overline{\chi'} \, d m_K  \;\;\; \text{(by definitions of $\rho_{B}^{\lambda}$ and $\rho_{-B}^{\lambda}$)} \\
    &= \widehat{\rho_{B}^{\lambda}}(\chi') \cdot \widehat{\rho_{-B}^{\lambda}}(\chi')\\
    &= \widehat{\rho_{B}^{\lambda} * \rho_{-B}^{\lambda}}(\chi') \\
    &= \int_K (\rho_{B}^{\lambda} * \rho_{-B}^{\lambda}) \cdot \overline{\chi'} \, d m_K \\
    &= \mu(\eta \overline{\chi}) \;\; \; \text{(by the definition of $\eta$ and \cref{lem:uniqueness-Bohr})}. \qedhere
\end{align*}
\end{proof}

We are ready to prove $\theta = (\rho_{A}^{\nu} * \rho_{B}^{\lambda} * \rho_{-B}^{\lambda}) \circ \tau$. Indeed, by \cref{claim:decomp_phi},
\begin{equation*}
    \theta := 1_A *_{\nu} \phi = 1_A *_{\nu} \eta + 1_A *_{\nu} \psi 
\end{equation*}
where $\psi$ is a null function and $\eta=(\rho_{B}^{\lambda} * \rho_{-B}^{\lambda}  )\circ\tau$. For all $t \in G$, we have
\begin{equation*}
    |1_A *_{\nu} \psi(t)| \leq \nu(|-\psi_t|) = \nu(|\psi|) = 0.
\end{equation*}
Moreover, since $\eta$ is a Fourier series with absolutely summable coefficients, $1_A *_{\nu} \eta$ is as well. 
It follows that $\theta$ is almost periodic. Therefore, to show $\theta = (\rho_{A}^{\nu} * \rho_{B}^{\lambda} * \rho_{-B}^{\lambda}) \circ \tau$, it suffices to check that $\theta$ and $(\rho_{A}^{\nu} * \rho_{B}^{\lambda} * \rho_{-B}^{\lambda}) \circ \tau$ have the same Fourier coefficients. We omit the computations as they are nearly identical to the proof of \cref{claim:decomp_phi}.
\end{proof}

\section{Bohr sets in \texorpdfstring{$\phi_1(A_i) + \phi_2(A_i) - \phi_2(A_i)$}{phi1(Ai) + phi2(Ai) - phi2(Ai)}}

\label{sec:bohr_sets_partition}


In this section we prove \cref{th:main-partition}, which says that $\phi_1(A_i) + \phi_2(A_i) - \phi_2(A_i)$ contains a Bohr set for some $A_i$ in any partition $G = \bigcup_{i=1}^r A_i$. Since the proof is technical and  uses cumbersome notation, we first sketch the main idea.
Fix an invariant mean $\nu$ on $G$. The pushforwards $\phi_{1, *} \nu$ and $\phi_{2, *} \nu$ are invariant means on $H_1 = \phi_1(G)$ and $H_2 = \phi_2(G)$, respectively. Since $H_1, H_2$ are only subgroups of $G$, in order to apply the correspondence principle (\cref{prop:correspondence_principle_bohr}), we need to extend $\phi_{1, *} \nu$ and $\phi_{2, *} \nu$ to  means $\nu_1$ and $\nu_2$ on $G$. Furthermore, $\nu$ can be chosen in such a way that $\nu_2$ is extremal. Having found such extensions, \cref{prop:correspondence_principle_bohr} implies that $\phi_1(A_i) + \phi_2(A_i) - \phi_2(A_i)$ contains the preimage of the support of
\begin{equation*}
   \rho_{\phi_1(A_i)}^{\nu_1} * \rho_{\phi_2(A_i)}^{\nu_2} * \rho_{-\phi_2(A_i)}^{\nu_2},
\end{equation*}
which in turn contains a Bohr set for some $i \in [r]$ thanks to \cref{cor:right_containment} and the corresponding partition result in compact groups (Theorem \ref{th:ll_compact} (ii)) from \cite{ll}. 

The precise result we need from \cite{ll} is the following.
\begin{proposition} [{\cite[Proposition 3.4]{ll}}] \label{prop:partition}
Let $K$ be a compact abelian group and $\tilde{\phi_1}, \tilde{\phi_2}$ be commuting continuous endomorphisms on $K$ with finite index images. Suppose $\rho_1, \ldots, \rho_r: K \to [0,1]$ are measurable functions such that $\sum_{i=1}^r \rho_i \geq 1$ almost everywhere. For $w \in G$, define
\[
  R_i(w) =  \iint_{K^2}  \rho_i(\tilde{\phi_2}(v)) \rho_i(w+u) \rho_i(u + \tilde{\phi_1}(v)) \ d\mu_K(u) d\mu_K(v). 
\]
Then there are $k, \eta >0$ depending only on $[K:\tilde{\phi_1}(K)], [K:\tilde{\phi_2}(K)]$ and $r$ such that for some $i \in [r]$, the support of $R_i$ contains a Bohr-$(k,\eta)$ set. 
\end{proposition}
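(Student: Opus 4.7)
The plan is to combine a Fourier-analytic reformulation of $R_i$ with a partition-regularity argument. First, substituting $u \mapsto u - \tilde{\phi_1}(v)$ and using translation invariance of $m_K$ lets one rewrite
\begin{equation*}
R_i(w) = \int_K \rho_i(\tilde{\phi_2}(v)) \cdot (\rho_i * \check{\rho}_i)(\tilde{\phi_1}(v) - w) \, dm_K(v),
\end{equation*}
where $\check{\rho}_i(x) := \rho_i(-x)$. Since $\rho_i * \check{\rho}_i$ is continuous, nonnegative, and positive definite on $K$ (its Fourier coefficients equal $|\widehat{\rho_i}(\chi)|^2$), the function $R_i$ is continuous on $K$. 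Pigeonholing on $\sum_i \rho_i \geq 1$ yields some $i$ with $\int_K \rho_i \, dm_K \geq 1/r$, and combining this with the finite-index hypothesis on $\tilde{\phi_2}$ (to lower-bound $\int \rho_i \circ \tilde{\phi_2} \, dm_K$) and the bound $(\rho_i * \check{\rho}_i)(0) = \|\rho_i\|_2^2 \geq 1/r^2$ should force $R_i(0) > 0$ for at least one $i$.

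Second, to promote ``$R_i(0) > 0$'' to ``$R_i(w) > 0$ on a Bohr neighborhood of $0$,'' I would approximate each $\rho_i$ by its truncation $\rho_i^\sharp$ to its finitely many large Fourier coefficients, supported on a finite set of characters $\Lambda_i \subset \widehat{K}$. The union $\Lambda_i \cup (\Lambda_i \circ \tilde{\phi_1}) \cup (\Lambda_i \circ \tilde{\phi_2})$ then defines a Bohr set on which replacing $\rho_i$ by $\rho_i^\sharp$ introduces only small error, so $R_i$ stays positive on that set. To obtain uniformity of the Bohr rank and radius in terms of $r$ and the indices $[K:\tilde{\phi_j}(K)]$ alone---independent of the particular densities $\rho_i$---one would invoke the Hales-Jewett theorem on a coloring of a combinatorial cube of characters induced by the dominant Fourier support across partition classes, extracting a monochromatic combinatorial line that yields Bohr structure compatible with some common index $i$.

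The principal obstacle is precisely this uniformity step. For arbitrary $\rho_i$ with $\sum_i \rho_i \geq 1$, Fourier mass can concentrate at very different characters across the $r$ classes, so an individual-class truncation argument produces Bohr parameters depending on the $\rho_i$ rather than only on $r$ and the indices. Partition-regularity via Hales-Jewett handles all classes simultaneously but is quantitatively expensive, which is consistent with the poor (though primitive recursive) bounds for $k$ and $\eta$ discussed after \cref{th:main-partition}.
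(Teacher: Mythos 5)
This proposition is not proved in the paper at all: it is imported verbatim from Part~I as \cite[Proposition 3.4]{ll}, and, as the introduction notes, the proof there is Ramsey-theoretic (it relies on the Hales--Jewett theorem). Measured against that, your sketch has a genuine gap, and it sits exactly where the real work is. Your first step claims that pigeonholing a class with $\int_K \rho_i\,dm_K \geq 1/r$, together with $\|\rho_i\|_2^2 \geq 1/r^2$ and the finite-index bound for $\int_K \rho_i\circ\tilde{\phi}_2\,dm_K$, ``should force $R_i(0)>0$.'' It does not. In your own rewriting $R_i(0)=\int_K \rho_i(\tilde{\phi}_2(v))\,(\rho_i*\check{\rho}_i)(\tilde{\phi}_1(v))\,dm_K(v)$, the two factors can be supported on essentially disjoint sets of $v$: take $K=\mathbb{T}$, $\tilde{\phi}_1=\tilde{\phi}_2=\mathrm{id}$, $r=3$, the partition into three arcs of length $1/3$, and $\rho_1=1_{(1/3,2/3)}$. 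Then $\rho_1*\check{\rho}_1$ is supported on $(-1/3,1/3)$ modulo $1$, so $R_1(0)=0$ even though $\int\rho_1=1/r$ and $\|\rho_1\|_2^2=1/3$. This is the compact analogue of the odd integers containing no solution of $x+y=z$: no argument that uses only the density and $L^2$-norm of a \emph{single} class can produce $R_i(0)>0$. The full partition hypothesis must be used to \emph{select} the index $i$, and obtaining, for some $i$, a lower bound on $R_i(0)$ (or an averaged surrogate) depending only on $r$ and the indices $[K:\tilde{\phi}_j(K)]$ is precisely the step for which \cite{ll} invokes Hales--Jewett.

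Your second step also misplaces the role of Hales--Jewett. Once a uniform lower bound $R_i(0)\geq c(r,[K:\tilde{\phi}_1(K)],[K:\tilde{\phi}_2(K)])$ is known for some $i$, upgrading it to positivity on a Bohr-$(k,\eta)$ set with uniform $k,\eta$ is a routine Fourier argument, with no Ramsey input and no dependence on the particular $\rho_i$: writing $R_i(w)=\sum_{\chi}|\widehat{\rho_i}(\chi)|^2\,\overline{\chi(w)}\,d_\chi$ with $d_\chi=\int_K \rho_i(\tilde{\phi}_2(v))\chi(\tilde{\phi}_1(v))\,dm_K(v)$, one has $\sum_\chi|d_\chi|^2\leq [K:\tilde{\phi}_1(K)]$ because $\chi\mapsto\chi\circ\tilde{\phi}_1$ is at most $[K:\tilde{\phi}_1(K)]$-to-one, and then splitting at $|\widehat{\rho_i}(\chi)|\geq\varepsilon$ (at most $\varepsilon^{-2}$ characters) and bounding the tail by Cauchy--Schwarz gives the required uniform rank and radius; compare the proof of \cref{thm:B+C+A_i_compact}. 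By contrast, your proposed application of Hales--Jewett to ``a coloring of a combinatorial cube of characters induced by the dominant Fourier support'' is not an argument as it stands: no cube, coloring, or mechanism for converting a monochromatic line into a Bohr set attached to a single class $i$ is specified, and it is aimed at the truncation step, which is not where uniformity fails. In short, the hard part is at $w=0$, not away from it, and the paper sidesteps it entirely by citing \cite{ll}.
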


 We turn to the details.  The following lemma helps us extend an invariant mean on $H = \phi(G)$ to a mean on $G$ by thinking of $\ell^{\infty}(H)$ as embedded into $\ell^{\infty}(G)$ through the pullback map $\phi^*$. 
\begin{lemma}
\label{lem:existence_mean_pullback}
Let $G$ and $H$ be discrete abelian groups and $\phi:G \to H$ be a surjective homomorphism. Then for every invariant mean $\mu$ on $H$, there exists an invariant mean $\nu$ on $G$ such that $\phi_* \nu = \mu$.
\end{lemma}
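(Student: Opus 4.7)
The plan is to extend $\mu$ to an invariant mean on $G$ by using an auxiliary invariant mean on the kernel $N=\ker\phi$ to ``fiberwise average'' functions on $G$ down to functions on $H$, then evaluating $\mu$ there.

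First I would set $N = \ker \phi$, so that $\phi$ identifies $G/N$ with $H$. Since $N$ is a discrete abelian group, it is amenable, so we may fix an invariant mean $\lambda$ on $N$. Given any $f \in \ell^\infty(G)$, I would define a function $\tilde f : H \to \mathbb{C}$ by
\[
\tilde f(h) := \lambda\bigl(n \mapsto f(g + n)\bigr),
\]
where $g \in G$ is any element with $\phi(g) = h$. This value is independent of the choice of preimage: if $\phi(g') = h$ as well, then $g' = g + n_0$ for some $n_0 \in N$, and $N$-invariance of $\lambda$ gives $\lambda(n \mapsto f(g' + n)) = \lambda(n \mapsto f(g + n))$. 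Since $|\tilde f(h)| \leq \|f\|_\infty$, we have $\tilde f \in \ell^\infty(H)$. I would then define
\[
\nu(f) := \mu(\tilde f).
\]

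Next I would verify that $\nu$ is an invariant mean on $G$ with $\phi_*\nu = \mu$. Linearity and positivity of $\nu$ follow from the corresponding properties of $\lambda$ and $\mu$, and $\nu(1_G) = \mu(1_H) = 1$. For translation invariance, fix $t \in G$ and compute $\widetilde{f_t}(h)$ by picking $g$ with $\phi(g) = h$ and setting $g' = g - t$, so that $\phi(g') = h - \phi(t)$:
\[
\widetilde{f_t}(h) = \lambda\bigl(n \mapsto f(g + n - t)\bigr) = \lambda\bigl(n \mapsto f(g' + n)\bigr) = \tilde f(h - \phi(t)) = (\tilde f)_{\phi(t)}(h).
\]
Applying $\mu$ and using its translation invariance on $H$ yields $\nu(f_t) = \mu((\tilde f)_{\phi(t)}) = \mu(\tilde f) = \nu(f)$. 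Finally, to check $\phi_*\nu = \mu$, take $h \in \ell^\infty(H)$; for any $h_0 \in H$ and any preimage $g$, we have $\phi(g + n) = h_0$ for all $n \in N$, so
\[
\widetilde{h \circ \phi}(h_0) = \lambda\bigl(n \mapsto h(\phi(g + n))\bigr) = \lambda\bigl(n \mapsto h(h_0)\bigr) = h(h_0),
\]
hence $\nu(h \circ \phi) = \mu(h)$.

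There is no real obstacle here beyond keeping the bookkeeping straight; the only moving part is ensuring that the fiberwise averaging procedure is well-defined, which is exactly what the invariance of $\lambda$ delivers. The use of amenability of $N$ is the key input, and it is automatic because $N$ is a discrete abelian group.
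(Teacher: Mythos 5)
Your proof is correct, and it takes a genuinely different route from the paper's. The paper works purely on the target side: it restricts attention to the subspace $V=\{h\circ\phi : h\in\ell^\infty_\R(H)\}$ of $\ell^\infty_\R(G)$, defines the linear functional $h\circ\phi\mapsto\mu(h)$ there, notes it dominates $p(f):=\inf_{x\in G}f(x)$, and invokes the Hahn--Banach theorem (with the concave minorant $p$) to get a positive, normalized extension $\lambda$ to all of $\ell^\infty_\R(G)$; since this $\lambda$ need not be $G$-invariant, they then smooth it by averaging against an auxiliary invariant mean $\eta$ on $G$, i.e. $\nu(f)=\int_G\lambda(f_x)\,d\eta(x)$. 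You instead exploit the short exact sequence $0\to N\to G\to H\to 0$ with $N=\ker\phi$: an invariant mean on $N$ gives a positive, norm-one, equivariant conditional-expectation-type map $f\mapsto\tilde f$ from $\ell^\infty(G)$ to $\ell^\infty(H)$ which is a left inverse to the pullback $h\mapsto h\circ\phi$, and then $\nu:=\mu\circ(\,\widetilde{\cdot}\,)$ is automatically the desired invariant mean with $\phi_*\nu=\mu$. Your route is more concrete: it avoids Hahn--Banach entirely and produces a single formula for $\nu$ rather than an existence argument followed by an averaging correction. The two arguments rely on different (though equally standard) amenability inputs---the paper uses an invariant mean on $G$, you use one on $N$---but both are available since all discrete abelian groups are amenable. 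All the verifications in your writeup (well-definedness via $N$-invariance of $\lambda$, the identity $\widetilde{f_t}=(\tilde f)_{\phi(t)}$, and $\widetilde{h\circ\phi}=h$) are correct under the paper's convention $f_t(s)=f(s-t)$.
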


\begin{proof}


First we observe that if $\nu$ is a linear functional on $\ell^\infty_\R(G)$ and $\nu(1_G) = 1$, then $\nu$ is positive if and only if $\nu(f) \geq p(f):= \inf_{x \in G} f(x)$ for all $f \in \ell^{\infty}_\R(G)$. Clearly $p$ is a concave function.

Let $V$ be the vector subspace of $\ell_\R^\infty(G)$ consisting of functions of the form $h \circ \phi$ for some $h \in \ell_\R^\infty(H)$. If $f \in V$, then by surjectivity of $\phi$, there is a unique $h \in \ell_\R^\infty(H)$ such that $f = h \circ \phi$. We have
\begin{eqnarray*}
\mu(h) &\geq& \inf_{y \in H} h(y) \qquad \textup{(since $\mu$ is an invariant mean on $H$)} \\
 & = & \inf_{x \in G} h(\phi(x)) = p(f) \qquad \textup{(since $\phi$ is surjective)}.
\end{eqnarray*}

By the Hahn-Banach theorem, the linear functional $f \mapsto \mu(h)$ on $V$ can be extended to a linear functional $\lambda$ on $\ell_\R^\infty(G)$ such that $\lambda(f) \geq p(f)$ for any $f \in \ell_\R^\infty(G)$. 
In particular, $\lambda$ is positive and $\lambda(1_G) = \lambda(1_H \circ \phi) = \mu(1_H) = 1$. We now show that $\lambda$ can be further refined to become $G$-invariant. 



We let $\eta$ be an invariant mean on $G$, and define
\[
\nu(f): = \int_{G} \lambda(f_x) \, d\eta(x) 
\]
for all $f \in \ell_\R^\infty(G)$. Then $\nu(f_g) = \nu(f)$ for all $g \in G$, since $\eta$ is translation invariant. The positivity of $\nu$ follows from the positivity of $\lambda$ and $\eta$. If $f = h \circ \phi \in V$, then $\lambda(f_g) = \mu(h_{\phi(g)}) = \mu(h)$ for all $g \in G$, so $\nu(f) = \mu(h)$. The lemma now follows, since an invariant mean is completely determined by its values on real-valued functions.
\end{proof}

If $H$ happens to be a subgroup of $G$, then another way to extend a mean on $H$ to a mean on $G$ is to consider $\ell^{\infty}(H)$ as a subset of $\ell^{\infty}(G)$ consisting of functions supported on $H$. This is the content of the next lemma.

\begin{lemma}
\label{lem:H_G_mean}
Let $H$ be a subgroup of $G$ of index $k \in \N$ and let $\mu$ be an invariant mean on $H$. There exists a unique invariant mean $\nu$ on $G$ such that 
\begin{equation*}
\nu(f) = \frac{\mu(f)}{k}
\end{equation*}
for every $f \in \ell^{\infty}(G)$ supported on $H$. Furthermore, if $\mu$ is extremal then $\nu$ is also extremal.
\end{lemma}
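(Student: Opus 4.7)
Let $g_1,\dots,g_k$ be coset representatives of $H$ in $G$, and for each $f\in \ell^\infty(G)$ and $i\in[k]$, define $f_i\in\ell^\infty(H)$ by $f_i(h):=f(g_i+h)$. My plan is to define
\[
\nu(f) \;:=\; \frac{1}{k}\sum_{i=1}^{k}\mu(f_i),
\]
and then verify in turn: (a) $\nu$ is an invariant mean on $G$, (b) $\nu(f)=\mu(f)/k$ whenever $\supp f\subset H$, (c) $\nu$ is the unique such mean, and (d) extremality of $\mu$ implies extremality of $\nu$.

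For (a), positivity and the normalization $\nu(1_G)=1$ are immediate. For translation invariance, fix $t\in G$; since $\{g_1,\dots,g_k\}$ is a full set of coset representatives, for each $i$ there is a unique $j=\sigma(i)$ and a unique $h_i'\in H$ with $g_i-t=g_{\sigma(i)}+h_i'$, and $i\mapsto\sigma(i)$ is a permutation of $[k]$. A direct computation gives $(f_t)_i = (f_{\sigma(i)})_{h_i'}$ as elements of $\ell^\infty(H)$, so invariance of $\mu$ under $H$ yields $\mu((f_t)_i)=\mu(f_{\sigma(i)})$, and summing over $i$ gives $\nu(f_t)=\nu(f)$. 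For (b), if $\supp f\subset H$ then $f_i(h)=f(g_i+h)$ vanishes unless $g_i\in H$, which holds for exactly one index $i_0$. For that index, $g_{i_0}\in H$ and $\mu(f_{i_0})=\mu(f|_H)$ by $H$-invariance of $\mu$, so $\nu(f)=\mu(f|_H)/k$.

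For (c), suppose $\nu'$ is any invariant mean on $G$ with the property in (b). Decompose $f=\sum_{i=1}^k f\cdot 1_{g_i+H}$; by invariance, $\nu'(f\cdot 1_{g_i+H})=\nu'((f\cdot 1_{g_i+H})_{-g_i})$, and the translated function is supported on $H$, so its value under $\nu'$ is forced by $\mu$. Summing, $\nu'(f)$ is determined, proving $\nu'=\nu$.

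The main obstacle, and the last step, is extremality. Suppose $\nu=\alpha\nu_1+(1-\alpha)\nu_2$ with $\nu_j$ invariant means on $G$ and $0<\alpha<1$. I would first observe that invariance forces $\nu_j(1_H)=1/k$: indeed, $\nu_j(1_{g_i+H})=\nu_j(1_H)$ for every $i$, and summing over $i$ gives $k\nu_j(1_H)=1$. This lets me define $\mu_j$ on $\ell^\infty(H)$ by $\mu_j(f):=k\,\nu_j(\tilde{f})$, where $\tilde{f}\in\ell^\infty(G)$ is the extension of $f$ by $0$ off $H$; translation invariance of $\nu_j$ under elements of $H\subset G$ (which preserve the property of being supported on $H$) shows $\mu_j$ is an invariant mean on $H$, and by construction $\mu=\alpha\mu_1+(1-\alpha)\mu_2$. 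Extremality of $\mu$ forces $\mu_1=\mu_2=\mu$, hence $\nu_1$ and $\nu_2$ agree with $\nu$ on all functions supported on $H$. But by the uniqueness argument in (c), a translation-invariant mean on $G$ is determined by its values on such functions, so $\nu_1=\nu_2=\nu$, proving extremality.
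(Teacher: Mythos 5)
Your proposal is correct and follows essentially the same route as the paper: define $\nu$ by averaging $\mu$ over the $k$ cosets, check $G$-invariance via the induced permutation of coset representatives, get uniqueness from the coset decomposition, and deduce extremality by restricting a convex decomposition of $\nu$ to functions supported on $H$ and invoking uniqueness. Your explicit verification that $\nu_j(1_H)=1/k$ (so that the restrictions scale to genuine invariant means on $H$) is a detail the paper leaves implicit, and the only blemish is a harmless sign slip in the identity $(f_t)_i=(f_{\sigma(i)})_{h_i'}$, which does not affect the conclusion since $\mu$ is invariant under all $H$-translates.
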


\begin{proof}
Let $H - g_i$ for $0 \leq i \leq k - 1$ be the cosets of $H$ in $G$ with $g_0 = 0$. We first show that an invariant mean $\nu$ satisfying the conclusion of the lemma must be unique. For a function $f$ supported on $H - g_i$, the function $f_{g_i}$ given by $x \mapsto f(x - g_i)$ is supported on $H$. Therefore, in this case, since $\nu$ is $G$-invariant, we must have
\begin{equation}\label{eq:first_define_nu}
     \nu(f) = \nu(f_{g_i}) = \frac{\mu(f_{g_i})}{k}.
\end{equation}

For an arbitrary $f \in \ell^{\infty}(G)$, define $f^i = f \cdot 1_{H - g_i}$.
Since $f = \sum_{i=0}^{k-1} f^i$, from the previous paragraph, we must have 
\begin{equation}
\label{eq:define_nu}
    \nu(f) = \sum_{i=0}^{k-1} \nu(f^i) = \frac{1}{k} \sum_{i=0}^{k-1} \mu((f^i)_{g_i}).
\end{equation}
This equation uniquely defines $\nu$.

It is easy to see that $\nu$ as defined in \eqref{eq:define_nu} is a linear functional on $\ell^{\infty}(G)$ with $\nu(1_G) = 1$. To show $\nu$ is $G$-invariant, we consider arbitrary $g \in G$ and $f \in \ell^{\infty}(G)$.
By the linearity of $\nu$ and \eqref{eq:first_define_nu},
\begin{equation}\label{eq:define_shift_nu}
    \nu(f_g) = \sum_{i=0}^{k-1} \nu((f^i)_g) = \frac{1}{k} \sum_{i=0}^{k-1} \mu(((f^i)_g)_{g_{j(i)}}) = \frac{1}{k} \sum_{i=0}^{k-1} \mu(((f^i)_{g+g_{j(i)}}). 
\end{equation}
where $j(i) \in \{0, \ldots, k-1\}$ is such that $-g_i + g + g_{j(i)}  \in H$.
For $i \in \{0, \ldots, k - 1\}$, let $h = -g_i + g + g_{j(i)}$. Since $\mu$ is $H$-invariant,
\begin{equation}\label{eq:mu_f_g_j}
    \mu(((f^i)_{g+g_{j(i)}}) = \mu(((f^i)_{g_i + h}) = \mu((f^i)_{g_i}).
\end{equation}
Relations \eqref{eq:define_nu}, \eqref{eq:define_shift_nu}, and \eqref{eq:mu_f_g_j} give $\nu(f_g) = \nu(f)$, and so $\nu$ is $G$-invariant.

Suppose $\mu$ is extremal. To show that $\nu$ is extremal, suppose $\nu = \alpha \nu_1 + (1 - \alpha) \nu_2$ where $\nu_1$ and $\nu_2$ are means on $G$ and $0 < \alpha < 1$. Restricting to $S := \{f \in \ell^{\infty}(G): f \text{ is supported on } H\}$, we get
\begin{equation*}
    \mu/k = \nu|_{S} = \alpha  \nu_1|_S + (1 - \alpha) \nu_2|_S.
\end{equation*}
Since $\mu$ is extremal, it must be that $\nu_1|_S = \nu_2|_S = \mu/k$. Due to the uniqueness of the extension of $\mu$ from $H$ to $G$, we deduce that $\nu_1 = \nu_2 = \nu$. Therefore, $\nu$ is extremal. 
\end{proof}

The next lemma shows that if $H$ is a subgroup of $G$ with finite index, then the Radon-Nikodym density associated with the mean $\mu$ on $H$ and the one associated with its extension on $G$ are the same.

\begin{lemma}
\label{lem:H_G_nu_mu}
Let $H$ be a subgroup of $G$ of index $k \in \N$. Let $K$ be a compact abelian group and $\tau: G \to K$ be a homomorphism with dense image and $K_H = \overline{\tau(H)}$. Let $B \subset H$ and $\mu$ be an invariant mean on $H$. Let $\nu$ be the extension of $\mu$ to $G$ as stated in \cref{lem:H_G_mean}. Suppose $\rho_{B}^{\nu}: K \to [0, 1]$ and $\rho_{B}^{\mu}: K_H \to [0, 1]$ are the associated Radon-Nikodym densities.
By identifying $\rho_{B}^{\mu}$ with its extension to $0$ outside of $K_H$, we have
\begin{equation*}
    \rho_{B}^{\nu} = \rho_{B}^{\mu}
\end{equation*}
$m_K$-almost everywhere.
\end{lemma}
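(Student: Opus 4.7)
The plan is to verify the equality of these two Radon--Nikodym densities by showing that the linear functionals $h \mapsto \int_K h \cdot \rho_B^\nu \, dm_K$ and $h \mapsto \int_K h \cdot \rho_B^\mu \, dm_K$ agree on all $h \in C(K)$, where $\rho_B^\mu$ is understood as its extension by $0$ off $K_H$. Since $C(K)$ is dense in $L^1(m_K)$ and both $\rho_B^\nu$ and the extended $\rho_B^\mu$ lie in $L^\infty(m_K)$, this will suffice to give the desired $m_K$-a.e.\ equality.

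First I would handle the left-hand side. Unfolding the defining identity of $\rho_B^\nu$ from \cref{def:rn} gives $\int_K h \cdot \rho_B^\nu \, dm_K = \nu\bigl((h \circ \tau) \cdot 1_B\bigr)$. The key input now is that $B \subset H$, so $(h \circ \tau) \cdot 1_B$ is supported on $H$; \cref{lem:H_G_mean} then yields
\[
\nu\bigl((h \circ \tau) \cdot 1_B\bigr) = \frac{1}{k}\,\mu\bigl((h|_{K_H} \circ \tau|_H) \cdot 1_B\bigr) = \frac{1}{k}\int_{K_H} h|_{K_H} \cdot \rho_B^\mu \, dm_{K_H},
\]
where the second equality applies \cref{def:rn} to the mean $\mu$ on $H$ and the map $\tau|_H: H \to K_H$ (which has dense image by construction of $K_H$).

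For the right-hand side, the extended $\rho_B^\mu$ vanishes off $K_H$, so $\int_K h \cdot \rho_B^\mu \, dm_K = \int_{K_H} h \cdot \rho_B^\mu \, d(m_K|_{K_H})$. The restriction $m_K|_{K_H}$ is a translation-invariant finite Borel measure on the compact group $K_H$, so uniqueness of Haar measure forces $m_K|_{K_H} = m_K(K_H) \cdot m_{K_H} = [K:K_H]^{-1} m_{K_H}$. Matching the two expressions reduces the lemma to the identity $[K:K_H] = k$, which I expect to be the main obstacle to extract from the abstract hypotheses. In the setting where this lemma is invoked in the proof of \cref{th:main-partition} --- $K$ being the Bohr compactification $bG$ and $H = \phi(G)$ for some finite-index endomorphism $\phi$ --- the identity follows from Pontryagin duality: the annihilator $K_H^\perp \subset \widehat{K} \cong \widehat{G}$ consists of those $\chi \in \widehat{G}$ with $\chi|_H \equiv 1$, which is canonically isomorphic to $\widehat{G/H}$, a group of order $k$. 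Hence $[K:K_H] = |K_H^\perp| = k$, and the uniqueness of Radon--Nikodym derivatives delivers the conclusion.
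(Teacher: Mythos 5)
You follow essentially the same route as the paper's own proof: unfold \cref{def:rn} on both sides, use the defining property of $\nu$ from \cref{lem:H_G_mean} (only the trivial coset contributes, since $B\subset H$), and then compare $m_K$ restricted to $K_H$ with $m_{K_H}$. The one genuine difference is the final step. The paper simply asserts that ``when restricting to $K_H$, the measure $m_K$ is equal to $\frac{1}{k}m_{K_H}$,'' whereas you correctly isolate that this is precisely the statement $[K:K_H]=k$, note that it is not a consequence of the abstract hypotheses, and then prove it by the annihilator argument in the one case where the lemma is actually used, namely $K=bG$ with $H=\phi(G)$. Your caution is warranted: with only the stated hypotheses one can have $[K:K_H]<k$, for instance $G=\Z$, $H=2\Z$, $K=\T$, $\tau(n)=n\alpha$ with $\alpha$ irrational, where $K_H=K$ and (taking $B=H$, so that \cref{lem:uniqueness-Bohr} applies to $\tau|_H$) $\rho_B^{\mu}\equiv 1$ while $\rho_B^{\nu}\equiv\frac{1}{2}$; in general the computation you and the paper both carry out gives $\rho_B^{\nu}=\frac{[K:K_H]}{k}\,\rho_B^{\mu}$ (extended by zero), so equality holds exactly when $[K:K_H]=k$. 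Thus your proposal is complete in the setting in which \cref{th:main-partition} invokes the lemma, and it in fact exposes the unstated hypothesis $[K:\overline{\tau(H)}]=[G:H]$ (automatic for the Bohr compactification) on which the paper's one-line measure identity quietly relies --- a gap in the lemma's stated generality rather than in your argument.
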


\begin{proof}
As in the proof of \cref{lem:H_G_mean}, let $H - g_i$ for $0 \leq i \leq k -1 $ be the cosets of $H$ in $G$ with $g_0 = 0$.
Since $B \subset H$, according to \cref{lem:supp_rho_A}, both $\rho_{B}^{\nu}$ and $\rho_{B}^{\mu}$ are supported on $K_H$.
From \eqref{eq:define_nu}, for $h \in C(K)$,
\begin{equation}
\label{eq:hcirctau}
    \nu(h \circ \tau \cdot 1_B) = \frac{1}{k} \sum_{i=0}^{k-1} \mu((h \circ \tau \cdot 1_B \cdot 1_{H-g_i})_{g_i}).
\end{equation}
Since $1_B$ is supported on $H$, 
\[
    h \circ \tau \cdot 1_B \cdot 1_{H-g_i} = 0 \text{ if $i \neq 0$}.
\]
Therefore, the right hand side of \eqref{eq:hcirctau} is equal to
\begin{equation*}
    \frac{1}{k} \mu(h \circ \tau \cdot 1_B) 
\end{equation*}
which is equal to 
\begin{equation*}
    \frac{1}{k} \int_{K_H} h \cdot \rho_{B}^{\mu} \, d m_{K_H}.
\end{equation*}
It follows that
\begin{equation*}
    \int_{K} h \cdot \rho_{B}^{\nu} \, d m_{K} = \nu(h \circ \tau \cdot 1_B) = \frac{1}{k} \int_{K_H} h \cdot \rho_{B}^{\mu} \, d m_{K_H}. 
\end{equation*}
Since when restricting to $K_H$, the measure $m_K$ is equal to $\frac{1}{k} m_{K_H}$, we deduce that $\rho_{B}^{\nu} = \rho_{B}^{\mu}$.
\end{proof}




We are ready to prove \cref{th:main-partition}.  Our proof will use \cref{cor:right_containment}, applied in the case where $K_1=bG$, $K_2=\tilde{\phi}(bG)$ (where $\tilde{\phi}$ is given by \cref{lem:Embeddings}(ii)), and $\tau_1=\tau_2=\tau=$ the canonical embedding of $G$ into $bG$.  In order to verify that the hypotheses of \cref{cor:right_containment} are satisfied, we want to know that every character $\psi$ of $\phi(G)$ can be written in the form $\chi' \circ \tau$, where $\chi'$ is a character of $\tilde{\phi}(bG)$. 
This is the case, as every $\psi\in \widehat{\phi(G)}$ can be extended to a character $\psi_0\in \widehat{G}$, and $\psi_0=  \chi_0\circ \tau$ for some $\chi_0 \in \widehat{bG}$.  Let $\chi':=\chi_0|_{\tilde{\phi}(bG)}$.  We claim that $\chi'\circ \tau = \psi$.  To see this, note that $\chi_0\circ \tau = \psi_0$, so $(\chi_0\circ \tau)|_{\phi(G)}=\psi_0|_{\phi(G)}=\psi$.  Finally, note that $\tau(\phi(G))\subset \tilde{\phi}(bG)$, since $\tilde{\phi}\circ \tau = \tau\circ \phi$.  Thus $(\chi_0\circ \tau)|_{\phi(G)}=\chi'\circ \tau$.


\begin{proof}[Proof of \cref{th:main-partition}]
Let $H_1 = \phi_1(G)$ and $H_2 = \phi_2(G)$.
Let $\mu$ be an extremal invariant mean on $H_2$. By \cref{lem:existence_mean_pullback}, there exists an invariant mean $\nu$ on $G$ such that the pushforward $\phi_{2, *} \nu$ is equal to $\mu$. In view of \cref{lem:H_G_mean}, $\phi_{1,*} \nu$ can be extended canonically from $H_1$ to a mean $\nu_1$ on $G$ such that
\begin{equation*}
    \nu_1(f) = \frac{(\phi_{1, *} \nu)(f)}{[G:H_1]}
\end{equation*}
for every $f \in \ell^{\infty}(G)$ supported on $H_1$.
Likewise, extend $\mu = \phi_{2,*} \nu$ from  $H_2$ to a mean $\nu_2$ on $G$. Since $\mu$ is extremal, $\nu_2$ is extremal; however, $\nu_1$ may not be extremal.

Let $A \subset G$, $K = bG$ and $\tau: G \to K$ be the natural embedding. By \cref{prop:correspondence_principle_bohr} and because $\nu_2$ is extremal, the sumset $\phi_1(A) + \phi_2(A) - \phi_2(A)$ contains
\begin{equation*}
    \tau^{-1}(\supp \rho_{\phi_1(A)}^{\nu_1} * \rho_{\phi_2(A)}^{\nu_2} * \rho_{\phi_2(-A)}^{\nu_2}).
\end{equation*}
In light of \cref{lem:H_G_nu_mu},  \begin{equation*}
   \rho_{\phi_j(A)}^{\nu_j} = \rho_{\phi_j(A)}^{\phi_{j,*} \nu} 
\end{equation*}
where we identify $\rho_{\phi_j(A)}^{\phi_{j,*} \nu}$ with its extension to $0$ outside of $\phi_j(K)$. It follows that $\phi_1(A) + \phi_2(A) - \phi_2(A)$ contains
\begin{equation*}
    \tau^{-1}(\supp \rho_{\phi_1(A)}^{\phi_{1, *} \nu} * \rho_{\phi_2(A)}^{\phi_{2, *} \nu} * \rho_{\phi_2(-A)}^{\phi_{2, *} \nu}).
\end{equation*}

For $j \in \{1, 2\}$, let $\tilde{\phi}_j: K \to K$ be continuous homomorphism such that $\tilde{\phi}_j \circ \tau = \tau \circ \phi_j$. Then $\tilde{\phi}_1 \circ \tilde{\phi}_2 \circ \tau =
 \tau \circ \phi_1 \circ \phi_2 =  \tau \circ \phi_2 \circ \phi_1 = \tilde{\phi}_2 \circ \tilde{\phi}_1 \circ \tau$. It follows that $\tilde{\phi}_1$ and $ \tilde{\phi}_2$ commute since $\tau(G)$ is dense in $K$. By Lemma \ref{lem:Embeddings}, $[K: \tilde{\phi}_j(K)] \leq [G: \phi_j(G)]$ is finite. 

For ease of notation, we write \[
f := \rho_{\phi_1(A)}^{\phi_{1, *} \nu}, \quad g := \rho_{\phi_2(A)}^{\phi_{2, *} \nu} \quad \textup{and } h:=\rho_{\phi_2(-A)}^{\phi_{2,*} \nu}.
\]
Note that $f, g, h$ are nonnegative.
\begin{claim}
The support of $f * g * h$ contains the support of $S: K \to [0, 1]$ defined by
\[
    S(w) := \iint_{K^2} f(\tilde{\phi_1} \circ \tilde{\phi_2}(v)) \cdot g(w + \tilde{\phi_2}(u)) \cdot h(- \tilde{\phi_2}(u) - \tilde{\phi_2} \circ \tilde{\phi_1}(v)) \ d m_K(u) d m_K(v).
\]
\end{claim}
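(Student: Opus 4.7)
The plan is to prove the claim by a change of variables that converts the integral defining $S(w)$ into the convolution $(f*g*h)(w)$, then to argue that the integration measure we obtain is absolutely continuous with respect to $m_K\times m_K$ on a set of positive measure. The key algebraic observation is to substitute $x = \tilde{\phi}_1\tilde{\phi}_2(v)$ and $y = w + \tilde{\phi}_2(u)$ inside the integrand of $S(w)$; using the commutativity $\tilde{\phi}_1\tilde{\phi}_2 = \tilde{\phi}_2\tilde{\phi}_1$ (established just before the claim), the argument of $h$ becomes
\[
-\tilde{\phi}_2(u) - \tilde{\phi}_2\tilde{\phi}_1(v) = -(y-w) - x = w - x - y,
\]
so the integrand matches that of $(f*g*h)(w) = \iint f(x)g(y)h(w-x-y)\, dm_K(x)\, dm_K(y)$.

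Next I would analyze the pushforward of $m_K\times m_K$ under the map $\Phi:K^2\to K^2$ given by $\Phi(u,v) = (\tilde{\phi}_1\tilde{\phi}_2(v),\, w+\tilde{\phi}_2(u))$. Set $L = \tilde{\phi}_1\tilde{\phi}_2(K)$ and $M = \tilde{\phi}_2(K)$. By Lemma \ref{lem:Embeddings}(ii) together with the commutativity, both $L$ and $M$ are closed subgroups of $K$ of finite index (in particular $[K:L]\leq [G:\phi_1(G)][G:\phi_2(G)]$ and $[K:M]\leq [G:\phi_2(G)]$). By Fubini and uniqueness of Haar measure, the pushforward $\Phi_*(m_K\times m_K)$ coincides with the product of the normalized Haar measure on $L$ and the translate of the normalized Haar measure on $M$ by $w$, viewed as a measure on $K^2$ supported on $L \times (w+M)$.

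Now suppose $w\in\supp(S)$, so $S(w)>0$. Since $f,g,h\geq 0$, the set
\[
E_w := \{(x,y)\in L\times(w+M) : f(x)g(y)h(w-x-y)>0\}
\]
has positive mass under $\Phi_*(m_K\times m_K)$, hence positive measure with respect to the Haar measure on the subgroup $L\times(w+M)$. Because $L$ has finite index in $K$, the restriction $m_K|_L$ is a positive constant multiple of the Haar measure on $L$, and similarly for $w+M$; therefore $E_w$ also has positive $(m_K\times m_K)$-measure when viewed inside $K^2$. The nonnegativity of the integrand then yields $(f*g*h)(w)>0$, so $w\in\supp(f*g*h)$.

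The main subtlety to watch out for is the measure-theoretic passage in the last step: pushforward measures can be singular with respect to the ambient product Haar measure, and in general ``positive measure on a subgroup'' need not imply ``positive measure in $K^2$''. The finite-index hypothesis on $\phi_1(G)$ and $\phi_2(G)$, which carries over to $L$ and $M$ via Lemma \ref{lem:Embeddings}(ii), is precisely what makes this transfer legitimate, since a closed subgroup of infinite index would have $m_K$-measure zero and the argument would collapse.
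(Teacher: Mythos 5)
Your proof is correct, and it starts from the same change of variables as the paper's; where it differs is in how the resulting ``twisted'' integral is compared with the genuine convolution. The paper handles that step by citing \cite[Lemma 2.8]{ll}: for nonnegative $F$ and a continuous endomorphism $\phi$ of $K$ with $[K:\phi(K)]=m<\infty$, one has $\int_K F\circ\phi\,dm_K\le m\int_K F\,dm_K$. Applying this once in $u$ and once in $v$ (after using $\tilde{\phi}_2\circ\tilde{\phi}_1=\tilde{\phi}_1\circ\tilde{\phi}_2$) yields the pointwise domination
\[
S(w)\ \le\ [K:\tilde{\phi}_2(K)]\cdot[K:\tilde{\phi}_1\circ\tilde{\phi}_2(K)]\cdot (f*g*h)(w),
\]
from which the support containment is immediate. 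Your pushforward argument---that $(\tilde{\phi}_1\circ\tilde{\phi}_2)_*m_K$ and $(\tilde{\phi}_2)_*m_K$ are the normalized Haar measures of the closed finite-index subgroups $L$ and $M$, and that $m_K$ restricted to $L$ (resp.\ to the coset $w+M$) is $1/[K:L]$ (resp.\ $1/[K:M]$) times the corresponding translated Haar measure---is in effect a qualitative proof of that cited lemma, so the underlying mechanism is identical: finite index is exactly what makes the subgroup Haar measure comparable to the restriction of $m_K$, as you note at the end. What your route buys is self-containedness (no appeal to \cite{ll}); what it gives up is the explicit constant, i.e.\ the quantitative inequality displayed above, though only positivity is needed for the claim. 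If you write it up, make two small points explicit: the product form of $\Phi_*(m_K\times m_K)$ holds because the two coordinates of $\Phi$ depend on separate variables (Fubini), and the finiteness of $[K:L]$ (the paper's reference is \cite[Lemma 2.6]{ll}) can be justified in one line via $[K:\tilde{\phi}_1\tilde{\phi}_2(K)]\le[K:\tilde{\phi}_1(K)]\,[\tilde{\phi}_1(K):\tilde{\phi}_1\tilde{\phi}_2(K)]\le[K:\tilde{\phi}_1(K)]\,[K:\tilde{\phi}_2(K)]$.
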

\begin{proof}[Proof of Claim]
Note that by \cite[Lemma 2.6]{ll}, $\tilde{\phi_1} \circ \tilde{\phi_2} (K)$ has finite index in $K$. We recall \cite[Lemma 2.8]{ll}, which says that if $f$ is a nonnegative function on a compact abelian group $K$, $\phi$ is a continuous endomorphism on $K$ and $m = [K: \phi(K)] < \infty$, then
\[
\int_K f(\phi(x)) \, d\mu_K (x) \leq m \int_K f(x) \, d\mu_K (x).
\]
 By two applications of this fact, we have 
 \begin{eqnarray*}
 S(w) & \leq & [K:\tilde{\phi_2}(K)] \iint_{K^2} f(\tilde{\phi_1} \circ \tilde{\phi_2}(v)) \cdot g(w + u) \cdot h(- u - \tilde{\phi_2} \circ \tilde{\phi_1}(v)) \ d m_K(u) d m_K(v) \\
 &\leq& [K:\tilde{\phi_2}(K)] \cdot [K: \tilde{\phi_1} \circ  \tilde{\phi_2}(K)] \iint_{K^2} f(v) \cdot g(w + u) \cdot h(- u - v) \ d m_K(u) d m_K(v) \\
& = & [K:\tilde{\phi_2}(K)] \cdot [K: \tilde{\phi_1} \circ  \tilde{\phi_2}(K)] \cdot f*g*h(w),
\end{eqnarray*}
thus proving the claim.
\end{proof} 

By \cref{cor:right_containment} we have
\begin{align}\label{eq:p1p2}
    f (\tilde{\phi}_1 \circ \tilde{\phi}_2(v)) &\geq \rho_{A}^{\nu}(\tilde{\phi}_2(v)), \\
    \label{eq:p2p2}
    g (\tilde{\phi}_2(w) + \tilde{\phi}_2(u)) &\geq \rho_{A}^{\nu}(w + u),   
\end{align}
and
\begin{equation}
\label{eq:p2p1}
    h ( - \tilde{\phi}_2(u) - \tilde{\phi}_2 \circ \tilde{\phi}_1(v))) \geq \rho_{A}^{\nu}(u + \tilde{\phi}_1(v)).
\end{equation}

Therefore
\begin{equation}\label{eq:R1A}
    S(\tilde{\phi}_2(w)) \geq R_{A}(w)
\end{equation}
for all $w \in K$, where
\begin{equation*}
    R_{A}(w) := \iint_{K^2} \rho_{A}^{\nu}(\tilde{\phi}_2(v)) \rho_{A}^{\nu}(w + u) \rho_{A}^{\nu}(u + \tilde{\phi}_1(v)) \, d m_K(u) dm_K(v).
\end{equation*}
Combining (\ref{eq:p1p2}) - (\ref{eq:R1A}), we get that for all $A \subset G$, the sumset $\phi_1(A) + \phi_2(A) - \phi_2(A)$ contains $\tau^{-1}(\tilde{\phi}_2(\supp R_{A}))$.

As a consequence, we have for each partition $G = \bigcup_{i=1}^r A_i$ and each $i \in [r]$,
\begin{equation*}
    \phi_1(A_i) + \phi_2(A_i) - \phi_2(A_i) \supset \tau^{-1}(\tilde{\phi}_2(\supp R_{A_i})).
\end{equation*}
By \cref{cor:sum_rho_1}, $\sum_{i=1}^r \rho_{A_i}^{\nu} = 1$ almost everywhere. Therefore, in view of Proposition \ref{prop:partition}, for some $i \in [r]$, the support of $R_{A_i}$ contains a Bohr-$(k, \eta)$ set $B \subset K$ where $k, \eta$ depend only on $r$ and the indices $[K:\tilde{\phi}_1(K)], [K:\tilde{\phi}_2(K)]$. 

By Lemma \ref{lem:phiB_Bohr}, $\tilde{\phi}_2(B)$ is a Bohr-$(k', \eta')$ set where $k', \eta'$ depend only on $k, \eta$ and $[K:\tilde{\phi}_2(K)]$. \cref{lem:from_kronecker_up} then implies that $\tau^{-1}(\tilde{\phi}_2(B))$ contains a Bohr-$(k', \eta')$ set and our proof finishes.
\end{proof}

\section{Third correspondence principle}

\label{sec:B+C+A_i_correspondence}

In this section we derive a correspondence principle for $B + C + A_i$.  Assuming only that the summands $A, B, C$ have positive upper Banach density, we cannot guarantee that $A+B+C$ is a Bohr set, a translate of a Bohr set, or even that $A+B+C$ is syndetic.\footnote{In every countably infinite abelian group, there are sets $D,E$ with positive upper Banach density where $D+E$ is not syndetic, and Proposition 6.2 of \cite{BBF} produces sets $A,B,C$ having positive upper Banach density where $A+B+C\subset D+E$.}  Under the stronger assumption that $A$ and $B$  have positive upper Banach density and that $C$ is syndetic, \cite{bfw-bohr} proves (for the ambient group $\mathbb Z$) that $A+B+C$ contains a translate of a Bohr set.  Our Theorem \ref{thm:B+C+A_i_discrete} has a similar, but weaker hypothesis: partitioning $G$ as $A_1\cup \cdots \cup A_r$, it is possible that none of the $A_i$ are syndetic.  Of course, one of the $A_i$ must be piecewise syndetic (\cite{brown-PWsynd}, \cite{hindman-strauss-book}).

 Proposition \ref{prop:B+C-A} says that when $A, B, C\subset G$ with $d^*(B), d^*(C)>0$, the sumset $B+C+A$ can be modeled by a convolution $h_B*h_C*h_A$ on a compact group $K$, where $\int h_B\, dm_K \geq d^*(B)$ and $\int h_C\, dm_K\geq d^*(C)$.  In this correspondence principle, the hypothesis $d^*(A)>0$ is not strong enough to guarantee that $h_A$ is nonzero.  However, assuming that $G=A_1\cup \cdots \cup A_r$, we will be able to conclude that $\sum_{i=1}^r h_{A_i} \geq 1$ almost everywhere and this suffices to give an useful bound on the $h_B * h_C * h_{A_i}(0)$ for some $i \in [r]$.




\begin{definition}
Let $A,B\subset G$.  We write $A\prec B$ if for all finite subsets $A'\subset A$, there exists $t\in G$ such that $A'+t\subset B$.  In this case, we say that $A$ is \emph{finitely embeddable in $B$}. 
\end{definition}



%


The following lemma is implicit in \cite{griesmer-small-sum-pairs} and to some extent in \cite{griesmer-dense-set}.  A similar statement for amenable groups can be obtained from Propositions 1.10 and 1.11 in \cite{bf-advances}.

\begin{lemma}\label{lem:PWBohrSumsets}
Let $B, C \subset G$. There exist a compact abelian group $K$, a homomorphism $\tau: G \to K$ for which $\tau(G)$ is dense in $K$, functions $h_B, h_C: K \to [0, 1]$ such that 
\begin{enumerate}[label=(\roman*)]
    \item $\int_K h_B \, d m_K = d^*(B)$ and $\int_K h_C \, d m_K = d^*(C)$, and
    
    \item $\{g \in G: h_B * h_C(\tau(g)) > 0\} \prec B+C$.

\end{enumerate}
\end{lemma}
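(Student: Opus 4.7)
The plan is to take $K = bG$ with $\tau: G \to bG$ the canonical embedding (hence $\tau(G)$ is dense in $K$), and realize $h_B, h_C$ as Radon--Nikodym densities (Definition \ref{def:rn}) with respect to suitable invariant means. If $d^*(B) = 0$ (resp.\ $d^*(C) = 0$), take $h_B \equiv 0$ (resp.\ $h_C \equiv 0$); then the set in (ii) is empty and both conclusions hold trivially. Otherwise, apply Bauer's maximum principle to pick extremal invariant means $\nu_B, \nu_C$ on $G$ with $\nu_B(1_B) = d^*(B)$ and $\nu_C(1_C) = d^*(C)$, and set $h_B := \rho_B^{\nu_B}$, $h_C := \rho_C^{\nu_C}$. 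Conclusion (i) follows immediately from Lemma \ref{lem:supp_rho_A}(i).

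For (ii), since $h_B, h_C \in L^2(m_K)$, the convolution $h_B * h_C$ is continuous on $K$, so $S := \{g \in G : (h_B * h_C)(\tau(g)) > 0\}$ is the $\tau$-pullback of an open subset of $K$. Given a finite $F = \{g_1, \ldots, g_n\} \subset S$, I need to produce $t \in G$ with $g_i + t \in B + C$ for all $i$. The key reduction uses the simple bound $1_{B+C}(g) \geq \mathbf{1}[(1_B *_\mu 1_C)(g) > 0]$ valid for any invariant mean $\mu$: positivity of the $\mu$-convolution forces $B \cap (g-C) \neq \varnothing$, hence $g \in B+C$. Thus it suffices to exhibit $t$ with $(1_B *_{\nu_B} 1_C)(g_i + t) > 0$ for every $i$.

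Following the strategy of Proposition \ref{prop:correspondence_principle_bohr}, I would decompose $\phi := 1_B *_{\nu_B} 1_C$ on $G$ as $\phi_{ap} + \phi_{null}$ (almost periodic plus null), identify $\phi_{ap}$ with the pullback $\Phi \circ \tau$ of a continuous function $\Phi$ on $K$, and combine continuity of $\Phi$, density of $\tau(G)$ in $K$, and the fact that $\phi_{null}$ integrates to zero against every invariant mean to locate a common translation $t$ for which $\phi(g_i + t) > 0$ simultaneously for all $i$.

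The main obstacle will be the identification $\Phi = h_B * h_C$. A direct Fourier computation via Lemma \ref{lem:inv_mean_need_one_extremal} (using extremality of $\nu_B$) yields $\widehat{\phi}(\chi) = \widehat{h_B}(\chi) \cdot \widehat{\rho_C^{\nu_B}}(\chi)$, which involves $\rho_C^{\nu_B}$ in place of $h_C = \rho_C^{\nu_C}$; in general $\nu_B \neq \nu_C$, so these densities differ. To overcome this I would adapt the strategy of \cite{griesmer-small-sum-pairs} and \cite{bf-advances}: construct a joint $G$-system in which both $1_B$ and $1_C$ are simultaneously realized at their true upper Banach densities (for example, via an ergodic joining of the two separate Furstenberg correspondence systems), and carry out the Fourier analysis inside a common Kronecker-type factor $K$ on which a single extremal mean governs both factors of the convolution. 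In that framework the almost periodic part $\phi_{ap}$ coincides with $(h_B * h_C) \circ \tau$, and the translation argument above completes the proof.
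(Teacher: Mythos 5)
Your setup (taking $K=bG$, $h_B=\rho_B^{\nu_B}$, $h_C=\rho_C^{\nu_C}$, the reduction of $\prec$ to positivity of a mean-convolution, and the translation argument via an almost periodic plus null decomposition) is sound as far as it goes, and you correctly diagnose the obstruction: with a single invariant mean the Fourier computation of $1_B*_{\nu_B}1_C$ produces $\widehat{\rho_B^{\nu_B}}\cdot\widehat{\rho_C^{\nu_B}}$, and $\int_K\rho_C^{\nu_B}\,dm_K=\nu_B(1_C)$ may be far below $d^*(C)$ (even zero), so conclusion (i) is lost. But that obstruction \emph{is} the entire content of the lemma, and your proposed fix --- ``an ergodic joining of the two separate Furstenberg correspondence systems, with a single extremal mean governing both factors'' --- is a gesture, not an argument. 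Two concrete problems: first, the Furstenberg correspondence principle only controls densities of intersections $B\cap(B-w_1)\cap\cdots$ of translates of a \emph{single} set, so neither correspondence system, nor any joining of the two, comes with a statement linking its orbits to membership in the sumset $B+C$; a joining couples the two measures but yields no pointwise conclusion of the form needed for $\{g: h_B*h_C(\tau(g))>0\}\prec B+C$. Second, it is not explained how an extremal invariant mean on $G$ would ``govern'' the joined system so that the almost periodic part of some convolution on $G$ is identified with $(h_B*h_C)\circ\tau$ at full densities $d^*(B)$ and $d^*(C)$; this identification is exactly what must be proved, and asserting that ``in that framework'' it holds begs the question.

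The paper does not attempt this from scratch either: it invokes \cite[Lemmas 2.8 and 4.1]{griesmer-small-sum-pairs} as black boxes. Those results supply precisely the missing ingredients: an ergodic system $(X,\mu,T)$ with a \emph{clopen} set $O_C$ of measure $d^*(C)$ together with the pointwise, topological-dynamical statement that $\{g: T_gx\in\bigcup_{b\in B}T^bO_C\}\prec B+C$ for every $x$, and a group rotation factor $\pi:X\to K$ in which $\pi^{-1}(\supp(f_B*f_C))$ is contained ($\bmod\ \mu$-null sets) in $\bigcup_{b\in B}T^bO_C$ with $\int f_B\,dm_K=d^*(B)$ and $\int f_C\,dm_K=d^*(C)$; choosing a suitable point $x$ and translating $f_B$ by $\pi(x)$ then gives the lemma. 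This careful generic-point/clopen-set construction (or an equivalent, as in \cite{bf-advances} or \cite{BBF}) is the nontrivial step your proposal defers, so as written the proof of part (ii) is incomplete.
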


\begin{remark}
Readers familiar with Furstenberg's correspondence principle and Kronecker factors may appreciate the following additional detail: to obtain the group $K$, one may apply the Furstenberg correspondence principle to find ergodic measure preserving systems $\mb X_B=(X_B,\mu_B,T_B)$ and $\mb X_C=(X_C,\mu_C,T_C)$ modeling $B$ and $C$, with corresponding Kronecker factors $\mb K_B=(K_B,m_{K_B},R_B)$ and $\mb K_C=(K_C,m_{K_C},R_C)$.  The groups $K_B$ and $K_C$ are the respective duals of the eigenvalue groups $\mathcal E(\mb X_B)$ and $\mathcal E(\mb X_C)$ of $\mb X_B$ and $\mb X_C$ (as described by \cref{lem:rotations}).  The group $K$ may be realized as the phase space of the maximal common factor of $\mb K_B$ and $\mb K_C$, or, equivalently, as the dual of $\mathcal E(\mb X_B)\cap \mathcal E(\mb X_C)$.
\end{remark}

\begin{proof}
By \cite[Lemma 2.8]{griesmer-small-sum-pairs}, there is an ergodic measure preserving $G$-system $(X,\mu,T)$, where $X$ is a compact metric space, and a clopen set $O_C\subset X$ with $\mu(O_C)=d^*(C)$ such that for all $x\in X$, 
\begin{equation}
\label{eqn:BOAprecAB1}
    \{g\in G: T_gx\in \bigcup_{b \in B} T^b O_C\} \prec B + C.
\end{equation}
By \cite[Lemma 4.1]{griesmer-small-sum-pairs}, there is a group rotation factor $(K, m_K,R)$ of $(X,\mu,T)$ with factor map $\pi:X\to K$ and a homomorphism $\tau: G \to K$ with dense image such that
\begin{equation}
\label{eq:TbOC}
\bigcup_{b \in B} T^b O_C \supset \pi^{-1}(J) \; \text{ up to a set of } \mu\text{-measure } 0,
\end{equation}
where $J := \supp (f_B * f_C)$ for some functions $f_B, f_C:K\to [0,1]$ with $\int_K f_B\, dm_K = d^*(B)$ and $\int_K f_C \, dm_K=d^*(C)$. 

Note that for $\mu$-almost every $x \in X$, $R_g \pi(x) = \pi(T_g x)$. Therefore, if $R_g (\pi(x)) \in J$, then $T_g x \in \pi^{-1}(J)$.  Thus, from \eqref{eq:TbOC}, for $\mu$-almost every $x\in X$, we have 
\[
    \text{if } R_g(\pi(x))\in J \text{ then } T_g x\in \bigcup_{b \in B} T^b O_C.
\]
Fix such an $x$. Then 
\[
    \{g \in G: f_B*f_C(\pi(x)+\tau(g))>0\} \subset \{g \in G: T_g x\in \bigcup_{b \in B} T^b O_C\}.
\]
The relation \eqref{eqn:BOAprecAB1} then implies $\{g \in G: f_B*f_C(\pi(x)+\tau(g))>0\} \prec B+C$. By defining functions $h_B, h_C$ as $h_B(t) := f_B(t + \pi(x))$ and $h_C = f_C$, we obtain our conclusion.

\end{proof}






\begin{lemma}\label{lem:PrecConvolution}
Let $K$ be a compact metrizable abelian group and $\tau:G\to K$ be a homomorphism with dense image. Let $h: K\to [0,1]$ be continuous and let $A_h:=\{g \in G:h(\tau(g))>0\}$. If $A_h\prec D$, then there is a translate $h'$ of $h$ and an invariant mean $\lambda$ on $G$ such that 
\begin{equation*}\label{eqn:1C>h'} 1_{D}*_{\lambda } q \geq h'\circ \tau *_{\lambda } q 
\end{equation*}
for all $q: G\to [0,1]$.
\end{lemma}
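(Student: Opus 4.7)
The strategy is to produce an invariant mean $\lambda$ and a translate $h'$ of $h$ with $\lambda(h'\circ\tau\cdot 1_{D^c})=0$. This is enough: the pointwise bound $h'\circ\tau\le 1_D+h'\circ\tau\cdot 1_{D^c}$ (easily verified on $D$ and $D^c$) implies, for every $v\in [0,1]^G$,
\[
\lambda(v\cdot h'\circ\tau)\le \lambda(v\cdot 1_D)+\lambda(v\cdot h'\circ\tau\cdot 1_{D^c})\le \lambda(v\cdot 1_D),
\]
since $v\le 1$. Specializing to $v(x):=q(t-x)$, which lies in $[0,1]^G$ for any $t\in G$ and $q:G\to[0,1]$, this is precisely $h'\circ\tau*_\lambda q(t)\le 1_D*_\lambda q(t)$.

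First I would fix a F{\o}lner sequence $(F_n)$ in $G$. Since $F_n\cap A_h$ is a finite subset of $A_h$, the hypothesis $A_h\prec D$ supplies $t_n\in G$ with $(F_n\cap A_h)+t_n\subset D$. Sequential compactness of $K$ lets me pass to a subsequence along which $\tau(t_n)\to s$ for some $s\in K$; I then set $h'(x):=h(x-s)$. The translated sets $F_n':=F_n+t_n$ still form a F{\o}lner sequence, since
\[
\frac{|F_n'\triangle(F_n'+g)|}{|F_n'|}=\frac{|F_n\triangle(F_n+g)|}{|F_n|}\to 0,
\]
so any Banach limit
\[
\lambda(f):=\lim_{\mathcal U}\frac{1}{|F_n'|}\sum_{g\in F_n'}f(g)
\]
along a free ultrafilter $\mathcal U$ on $\mathbb N$ defines an invariant mean on $G$.

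The crux is checking $\lambda(h'\circ\tau\cdot 1_{D^c})=0$. After the change of variable $g=g'+t_n$, the defining average equals
\[
\frac{1}{|F_n|}\sum_{g'\in F_n}h\bigl(\tau(g')+\tau(t_n)-s\bigr)\,1_{D^c}(g'+t_n).
\]
For $g'\in F_n\cap A_h$ the indicator vanishes by the choice of $t_n$. For $g'\in F_n\setminus A_h$ one has $h(\tau(g'))=0$, and uniform continuity of $h$ on the compact group $K$ together with $\tau(t_n)\to s$ forces $h(\tau(g')+\tau(t_n)-s)\to 0$ uniformly in $g'$. Hence the whole average tends to $0$. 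I do not foresee a serious obstacle; the only point to be careful about is to couple the ultrafilter limit defining $\lambda$ with the translate $s$ through the same sequence $(t_n)$, which is what the translated F{\o}lner sequence $F_n'=F_n+t_n$ accomplishes.
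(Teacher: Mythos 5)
Your proposal is correct and follows essentially the same route as the paper: choose $t_n$ with $(F_n\cap A_h)+t_n\subset D$, pass to a subsequence so that $\tau(t_n)$ converges, take $h'$ to be the corresponding translate of $h$, and let $\lambda$ be a limit of the averages along the translated F{\o}lner sequence $F_n+t_n$. The only (cosmetic) difference is the finish: you reduce the inequality to $\lambda(h'\circ\tau\cdot 1_{D^c})=0$ and invoke positivity of $\lambda$, whereas the paper carries the test function $q$ through the limit directly; both are valid.
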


\begin{proof} Let $(F_N)_{N\in \mathbb N}$ be a F{\o}lner sequence for $G$.  Since $F_N\cap A_h\subset A_h$ and $A_h\prec D$, we may choose, for each $N\in \mathbb N$, a $t_N\in G$ so that $(F_N\cap A_h)+t_N\subset D$.  Note that $(F_N+t_N)_{N \in \N}$ is also a F{\o}lner sequence. Passing to a subsequence if necessary, we assume $\tau(t_N)$ converges to a point $k_0$ in $K$.  Let $h'(k) = h(k-k_0)$ for $k \in K$, so that $h(k-\tau(t_N))$ converges uniformly to $h'(k)$.

Define a sequence of functions $p_N: F_N+t_N\to [0,1]$ by $p_N(g+t_N)=h(\tau(g))$.   Since $h(\tau(g))=0$ for each $g\in (F_N\setminus A_h)$, and $F_N\cap A_h + t_N\subset D$, we have $1_D(g)\geq p_N(g)$ for all $g\in F_N+t_N$.

For each $N\in \mathbb N$ and each $q:G\to [0,1]$ we have
\begin{equation}\label{eqn:FNtoLambda}
\begin{split}\frac{1}{|F_N|}\sum_{g\in F_N+t_N} 1_D(g)q(t-g)
&\geq \frac{1}{|F_N|}\sum_{g\in F_N+t_N} p_N(g) q(t-g)\\
&= \frac{1}{|F_N|}\sum_{g\in F_N+t_N} h(\tau(g)-\tau(t_N))q(t-g).\end{split}
\end{equation}
For each $N$, let $\lambda_N$ be the linear functional on $\ell^\infty(G)$ defined by $\lambda_N(f):=\frac{1}{|F_N|}\sum_{g\in F_N + t_N} f(g)$. Let $\lambda$  be a linear functional on $\ell^\infty(G)$ that is a weak$^*$ limit point of the sequence $\lambda_N$ (meaning that for all $f\in \ell^\infty(G)$, all $\varepsilon>0$, and all $M\in \mathbb N$ there is an $N>M$ such that $|\lambda(f)-\lambda_N(f)|<\varepsilon$). In other words, $\lambda \in \bigcap_{M=1}^\infty \overline{\{\lambda_N: N>M\}}.$

Since $h(k-\tau(t_N))$ converges uniformly in $N$ to $h(k-k_0)=h'(k)$, (\ref{eqn:FNtoLambda}) implies $1_D*_{\lambda}q(t) \geq  h'\circ \tau *_{\lambda} q(t)$ for all $t\in G$. \end{proof}

\begin{lemma}
\label{lem:D-A}
Let $K$ be a compact abelian group and $\tau:G\to K$ be homomorphism with dense image. Let $h: K\to [0,1]$ be a continuous function and $\lambda$ be an invariant mean on $G$.  Then for every $A \subset G$,
\begin{equation*}
    (h\circ \tau)*_{\lambda} 1_{A} = (h*\rho^{\lambda}_{A})\circ \tau,
\end{equation*}
where $\rho_{A}^{\lambda}$ is defined in \cref{def:rn}.
\end{lemma}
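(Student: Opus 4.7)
The plan is to prove the identity pointwise: fix $t \in G$, and reduce both sides to the same expression by a single well-chosen application of the defining identity of the Radon--Nikodym density (\cref{def:rn}).

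First, I would unfold the right-hand side. By definition of convolution on the compact group $K$,
\[
(h*\rho_A^\lambda)(\tau(t)) = \int_K h(\tau(t)-z)\,\rho_A^\lambda(z)\, dm_K(z).
\]
The key move is to introduce the test function $H_t : K \to \mathbb{C}$ defined by $H_t(z) := h(\tau(t)-z)$. Since $h$ is continuous on $K$ and the group law on $K$ is continuous, $H_t$ lies in $C(K)$. Applying the defining identity of $\rho_A^\lambda$ to $H_t$ gives
\[
\int_K H_t(z)\,\rho_A^\lambda(z)\, dm_K(z) = \lambda\bigl((H_t\circ\tau)\cdot 1_A\bigr) = \lambda\bigl(x \mapsto h(\tau(t-x))\cdot 1_A(x)\bigr),
\]
where we used $H_t(\tau(x)) = h(\tau(t)-\tau(x)) = h(\tau(t-x))$ by additivity of $\tau$.

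Next, I would identify the left-hand side with this same quantity. Unfolding the convolution, $(h\circ\tau)*_\lambda 1_A(t) = \lambda\bigl(x \mapsto h(\tau(x))\cdot 1_A(t-x)\bigr)$. Reparametrizing the dummy variable by $y = t-x$ and invoking the translation invariance of $\lambda$ converts this into $\lambda\bigl(y \mapsto h(\tau(t-y))\cdot 1_A(y)\bigr)$, which matches the expression obtained above from \cref{def:rn}. This yields the claimed equality at the arbitrary point $t$.

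The main obstacle is precisely this last relabeling step: in $(h\circ\tau)*_\lambda 1_A(t)$ the arguments of $h\circ\tau$ and $1_A$ must be interchanged, and one must be careful that the exchange is accomplished purely by translation invariance of $\lambda$ inside $G$ (which is what is available from $\lambda$ being an invariant mean). Once the correct continuous test function $H_t$ is chosen, the rest of the argument is a routine unwinding of \cref{def:rn} together with this single translation-invariance move.
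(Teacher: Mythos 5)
Your treatment of the right-hand side is fine (introducing the test function $H_t(z)=h(\tau(t)-z)$ and applying \cref{def:rn} is exactly the right use of the definition), but the final relabeling step on the left-hand side is a genuine gap. The substitution $y=t-x$ is not a translation of $G$: it is the inversion $x\mapsto -x$ composed with a translation, and an invariant mean is only assumed invariant under translations. Writing $F(x)=h(\tau(x))\,1_A(t-x)$, the function you want to pass to is $y\mapsto F(t-y)$, which is a translate of the reflection $\check F(x):=F(-x)$; translation invariance gives $\lambda\bigl(y\mapsto F(t-y)\bigr)=\lambda(\check F)$, but $\lambda(\check F)\neq\lambda(F)$ in general, because invariant means need not be inversion-invariant. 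For instance, on $G=\Z$ let $\lambda$ be a weak$^*$ cluster point of the functionals $f\mapsto \frac1N\sum_{n=1}^N f(n)$; then $\lambda(1_{\N})=1$ while $\lambda(1_{-\N})=0$.

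Moreover this is not a defect that a cleverer change of variables can repair, because the pointwise identity is itself sensitive to the asymmetry: taking $h\equiv 1$, your left-hand side is $\lambda(1_{t-A})=\lambda(1_{-A})$, while by \cref{lem:supp_rho_A}(i) the right-hand side is $\int_K\rho_A^\lambda\,dm_K=\lambda(1_A)$, so the identity as stated would force $\lambda(1_{-A})=\lambda(1_A)$ for every invariant mean, contradicting the example above with $A=\N$. What your correct manipulations actually yield, after replacing $x$ by $x+t$ (a genuine translation) and applying \cref{def:rn} to the translated test function $z\mapsto h(z+\tau(t))$ and the set $-A$, is
\[
(h\circ\tau)*_\lambda 1_A(t)=\int_K h(z+\tau(t))\,\rho^{\lambda}_{-A}(z)\,dm_K(z),
\]
i.e.\ $(h*\rho)\circ\tau$ with $\rho(z)=\rho^{\lambda}_{-A}(-z)$ in place of $\rho^{\lambda}_{A}(z)$; the two coincide only when $\lambda$ behaves symmetrically under $x\mapsto -x$ on the relevant functions. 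For comparison, the paper's proof reduces to characters $\chi$ and makes the same silent identification when it replaces $\int_K\chi(z)\rho^{\lambda}_{A}(-z)\,dm_K(z)$ by $\int_K\chi\cdot\rho^{\lambda}_{-A}\,dm_K$, so the obstacle you flagged is precisely the reflection symmetry the lemma needs; in the intended application the discrepancy is harmless because $\sum_i\rho^{\lambda}_{-A_i}(-\cdot)=1$ a.e.\ just as in \cref{cor:sum_rho_1}, but your argument as written does not prove the lemma as stated, and the step cannot be justified by translation invariance alone.
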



\begin{proof}
Approximating $h$ by trigonometric polynomials, it suffices to prove the statement for the special case where $h$ is a trigonometric polynomial.  By linearity, we may assume $h=\chi \in \widehat{K}$.  For such $\chi$, we have
\begin{align*}
 (\chi\circ \tau)*_{\lambda} 1_{A}(g) &:= \int_G \chi \circ \tau(x) \cdot 1_{A}(g-x)\, d\lambda(x)\\
 &= \int_G \chi \circ \tau(g+x) 1_{A}(-x)\, d\lambda(x)\\
 &= \chi\circ \tau(g) \int_G  \chi \circ \tau(x) \cdot 1_{A}(-x)\, d\lambda(x)\\
 &= \chi\circ \tau(g) \int_G \chi\circ \tau \cdot 1_{-A}\, d\lambda\\
 &= \chi\circ \tau(g) \int_K \chi \cdot \rho_{-A}^\lambda\, dm_K.
\end{align*}

Computing $\chi*\rho_{A}^\lambda(t)$ for $t\in K$, we get
\begin{align*}
\chi*\rho_{A}^\lambda(t) &= \int_K \chi(z)\rho_{A}^\lambda(t-z)\, dm_K(z) \\
&= \int_K \chi(z+t) \rho_{A}^\lambda(-z)\, dm_K(z)\\
&= \chi(t) \int_K \chi(z) \rho_{-A}^\lambda(z)\, dm_K(z)\\
&= \chi(t) \int_K \chi \cdot \rho_{-A}^\lambda\, dm_K.
\end{align*}
Substituting $\tau(g)$ for $t$, we get
\[
    (\chi \circ \tau) *_{\lambda} 1_A(g) = (\chi * \rho_{A}^{\lambda}) (\tau(g)),
\]
completing the proof. \end{proof}



Combining Lemmas \ref{lem:PWBohrSumsets}, \ref{lem:PrecConvolution} and \ref{lem:D-A}, we have a proposition which serves as a correspondence principle for $B + C + A_i$.

\begin{proposition}[Third correspondence principle]
\label{prop:B+C-A}
Let $B, C \subset G$. There exist a compact abelian group $K$, a homomorphism $\tau: G \to K$ with dense image, measurable functions $h_B, h_C: K \to [0, 1]$ and an invariant mean $\lambda$ on $G$ such that
\begin{enumerate}[label=(\roman*)]
    \item $\int_K h_B \, d m_K = d^*(B)$ and $\int_K h_C \, d m_K = d^*(C)$, 
    \item for all $A \subset G$, 
    \[
     B + C + A \supset \tau^{-1} (\supp( h_B * h_C * \rho_{A}^{\lambda})). 
    \]
\end{enumerate}
\end{proposition}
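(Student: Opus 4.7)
The plan is to chain the three preceding lemmas in sequence. First, applying \cref{lem:PWBohrSumsets} to $B$ and $C$ produces a compact abelian group $K$, a homomorphism $\tau : G \to K$ with dense image, and functions $h_B, h_C : K \to [0,1]$ satisfying the integral conditions in part (i), together with the finite-embedding relation $A_h := \{g \in G : (h_B * h_C)(\tau(g)) > 0\} \prec B + C$. Setting $D := B + C$, the convolution $h := h_B * h_C$ is a continuous $[0,1]$-valued function on $K$ (convolution of two functions in $L^2(m_K)$ on a compact group), so the hypotheses of \cref{lem:PrecConvolution} are met.

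Applying \cref{lem:PrecConvolution} to $h$ and $D$ yields an invariant mean $\lambda$ on $G$ and a translate $h'(\cdot) = h(\cdot - k_0)$ of $h$ such that $1_D *_\lambda q \geq (h' \circ \tau) *_\lambda q$ pointwise on $G$ for every $q : G \to [0,1]$. Specializing $q = 1_A$ for an arbitrary $A \subset G$ and invoking \cref{lem:D-A} to rewrite $(h' \circ \tau) *_\lambda 1_A = (h' * \rho_A^\lambda) \circ \tau$ gives
\[
1_D *_\lambda 1_A(t) \geq (h' * \rho_A^\lambda)(\tau(t)) \qquad \text{for all } t \in G.
\]
Whenever the right-hand side is positive, the left-hand side $1_D *_\lambda 1_A(t) = \lambda\bigl(x \mapsto 1_D(x) 1_A(t - x)\bigr)$ is positive, which forces its nonnegative integrand to be nonzero at some $x \in G$. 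For such $x$ we have $x \in D$ and $t - x \in A$, so $t \in D + A = B + C + A$. This establishes the containment $\tau^{-1}(\supp(h' * \rho_A^\lambda)) \subset B + C + A$.

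It remains to absorb the translation $k_0$ into the factored form of (ii). Replacing $h_B$ by $\tilde{h}_B(k) := h_B(k - k_0)$ preserves the integral condition in (i) by translation invariance of $m_K$, and commutativity and associativity of convolution on $K$ give $\tilde{h}_B * h_C * \rho_A^\lambda = (h * \rho_A^\lambda)(\cdot - k_0) = h' * \rho_A^\lambda$. Relabeling $\tilde{h}_B$ as $h_B$ then produces the precise form demanded by the proposition. I expect the only mild subtlety to be this translation bookkeeping in the last step, since \cref{lem:PrecConvolution} delivers only a translate of $h_B * h_C$ rather than the convolution itself; every other step is a direct invocation of a previously stated lemma.
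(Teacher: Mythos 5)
Your proposal is correct and follows essentially the same route as the paper's proof: invoke \cref{lem:PWBohrSumsets}, feed $h_B*h_C$ into \cref{lem:PrecConvolution}, rewrite via \cref{lem:D-A}, and absorb the resulting translate into $h_B$ (the paper phrases this as writing $h'$ as $h_B'*h_C$ with $h_B'$ a translate of $h_B$, which is exactly your relabeling step). The translation bookkeeping you flag is indeed the only subtlety, and you handle it as the authors do.
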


\begin{remark}
The invariant mean $\lambda$ depends on $B$ and $C$; it may not realize the upper Banach density of $A$. In particular, it is possible that $\lambda(A) = 0$ while $d^*(A) > 0$. 
\end{remark}

\begin{proof}
In view of \cref{lem:PWBohrSumsets}, there are a compact abelian group $K$, homomorphism $\tau: G \to K$ with dense image, measurable functions $h_B, h_C:K\to [0,1]$ with $\int h_B\, dm_K = d^*(B)$, $\int h_C\, dm_K=d^*(C)$ such that
\begin{equation*}
\label{eqn:BCFN}
    \{g \in G: h_B * h_C(\tau(g)) > 0\} \prec B+C.
\end{equation*}
We now apply Lemma \ref{lem:PrecConvolution} with $h_B*h_C$ in place of $h$: there is an invariant mean $\lambda$ on $G$ such that 
\begin{equation}
\label{eq:B+C_1_Ah'}
    1_{B + C} *_{\lambda} 1_A \geq h' \circ \tau *_{\lambda} 1_A,
\end{equation}
where $h'$ is a translate of $h_B*h_C$.  

By \cref{lem:D-A},
\begin{equation}
\label{eq:h'circ}
    h' \circ \tau *_{\lambda} 1_A = (h' * \rho_{A}^{\lambda}) \circ \tau.
\end{equation}
Note that $B + C + A$ contains the support of $1_{B + C} *_{\lambda} 1_A$ and $h'$ can be written as $h_B' * h_C$ where $h_B'$ is a translate of $h_B$. Therefore, \eqref{eq:B+C_1_Ah'} and \eqref{eq:h'circ} imply
\[
    B + C + A \supset \{g \in G: h_B' * h_C * \rho_{A}^{\lambda}(\tau(g)) > 0\}
\]
and this proves our proposition.  \end{proof}



\section{Bohr sets in \texorpdfstring{$B + C + A_i$}{B + C + Ai}}

\label{sec:proof_B+C+A_i}

The next proposition establishes the existence of Bohr sets in $B + C + A_i$ in compact abelian groups.

\begin{proposition}
\label{thm:B+C+A_i_compact}
Let $\delta_1, \delta_2 >0$ and $r\in \mathbb N$.  There are constants $\eta>0$ and $k\in \mathbb N$ such that the following holds: Let $K$ be a compact abelian group with probability Haar measure $m_K$ and let $f, g: K \to [0, 1]$ be measurable functions such that $\int_K f \, d m_K \geq \delta_1$ and $\int_K g \, d m_K \geq \delta_2$. For $i \in [r]$, let $h_i: K \to [0, 1]$ be measurable functions such that $\sum_{i=1}^r h_i = 1$ $m_K$-almost everywhere. Then for some $i \in [r]$, the support of $f * g * h_i$ contains a Bohr-$(k,\eta)$ set.
\end{proposition}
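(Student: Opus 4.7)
The plan is to carry out a direct Fourier-analytic argument on $K$, using the hypothesis $\sum_i h_i \equiv 1$ together with pigeonhole to locate a good index $i_0$, and then a spectral truncation to show that $f\ast g\ast h_{i_0}$ is essentially constant on a suitable Bohr set around $0$.

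First, I would set $F:=f\ast g$, and note that $F:K\to[0,1]$ (since $f,g\in[0,1]$), $\|F\|_2\leq 1$, and $\int F\,dm_K=\bigl(\int f\bigr)\bigl(\int g\bigr)\geq\delta_1\delta_2$. Convolving the identity $\sum_{i=1}^r h_i=1$ a.e.\ with $F$ gives $\sum_{i=1}^r F\ast h_i \equiv \int F\,dm_K \geq \delta_1\delta_2$ pointwise, so evaluating at $0$ and applying pigeonhole yields some $i_0\in[r]$ with
\[
(F\ast h_{i_0})(0)\;\geq\;\tfrac{\delta_1\delta_2}{r}.
\]

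Next, I would control the deviation $(F\ast h_i)(x)-(F\ast h_i)(0)$ for $x$ in a Bohr set. Set $c:=\delta_1\delta_2/(8r)$ and define
\[
\Lambda \;:=\; \operatorname{Spec}_c(f)\;=\;\{\chi\in\widehat K:|\hat f(\chi)|\geq c\}.
\]
By Parseval, $|\Lambda|\leq \|f\|_2^2/c^2\leq c^{-2}=64\,r^2/(\delta_1\delta_2)^2$. Let $\eta_0:=\delta_1\delta_2/(4r)$. Using $\widehat{F\ast h_i}(\chi)=\hat f(\chi)\hat g(\chi)\hat h_i(\chi)$ and Fourier inversion, I would write
\[
(F\ast h_i)(x)-(F\ast h_i)(0)\;=\;\sum_{\chi\in\widehat K}\hat f(\chi)\hat g(\chi)\hat h_i(\chi)\bigl(\chi(x)-1\bigr),
\]
and split the sum according to $\chi\in\Lambda$ or $\chi\notin\Lambda$. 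For $\chi\in\Lambda$ and $x\in B(\Lambda,\eta_0)$ we have $|\chi(x)-1|<\eta_0$, so by Cauchy--Schwarz the corresponding contribution is at most $\eta_0\,\|F\|_2\|h_i\|_2\leq\eta_0$. For $\chi\notin\Lambda$ we have $|\hat f(\chi)|<c$, hence that contribution is at most
\[
2c\sum_{\chi}|\hat g(\chi)||\hat h_i(\chi)|\;\leq\;2c\,\|g\|_2\|h_i\|_2\;\leq\;2c.
\]
Adding gives $|(F\ast h_i)(x)-(F\ast h_i)(0)|\leq \eta_0+2c=\delta_1\delta_2/(2r)$ for every $i$ and every $x\in B(\Lambda,\eta_0)$.

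Combining this uniform estimate with the pigeonhole bound at $0$, for $x\in B(\Lambda,\eta_0)$,
\[
(f\ast g\ast h_{i_0})(x)\;\geq\;\tfrac{\delta_1\delta_2}{r}-\tfrac{\delta_1\delta_2}{2r}\;=\;\tfrac{\delta_1\delta_2}{2r}\;>\;0,
\]
so the support of $f\ast g\ast h_{i_0}$ contains the Bohr-$(k,\eta)$ set $B(\Lambda,\eta_0)$ with $k=\lfloor 64\,r^2/(\delta_1\delta_2)^2\rfloor$ and $\eta=\delta_1\delta_2/(4r)$. Both bounds depend only on $\delta_1,\delta_2,r$ as required, and match the quantitative estimates announced in the introduction. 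There is no serious obstacle; the one conceptual choice that makes the argument go through cleanly is to take $\Lambda$ to be the large spectrum of $f$ rather than of $F$, since $|\hat F(\chi)|\leq|\hat f(\chi)|\cdot|\hat g(\chi)|$ allows the tail estimate to be closed by the trivial bound $\|g\|_2,\|h_i\|_2\leq 1$.
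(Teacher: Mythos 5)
Your proof is correct and is essentially the paper's proof: pigeonhole locates $i_0$ with $(f*g*h_{i_0})(0)\ge\delta_1\delta_2/r$, and a Fourier-analytic near-constancy estimate on a Bohr set built from a large spectrum finishes the argument. The paper delegates the near-constancy step to Lemmas 2.1 and 2.12 of \cite{ll} (which implicitly truncate at the large spectrum of $g$), whereas you carry out the spectral truncation directly via the large spectrum of $f$; the resulting bounds on $k$ and $\eta$ agree up to constant factors.
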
  

\begin{proof}
The proof is similar to an argument used in  \cite{ll} (Part I of this series). 
Since $\sum_{i=1}^r h_i=1$ almost everywhere, we have
\[
    f*g* \left(\sum_{i=1}^r h_i \right)(x) = f * g * 1_K (x) = \int_K f \, d m_K \cdot \int_K g \, d m_K \geq \delta_1 \delta_2
\]
for all $x \in K$. Therefore, by the pigeonhole principle, there exists $i \in [r]$ such that $f*g*h_i(0) \geq \delta_1 \delta_2/r.$

By \cite[Lemma 2.12]{ll}, we have
\begin{align*}
\left| f*g*h_i(t) - f*g*h_i(0) \right| & = \left| \iint_{K^2} (g(x)-g_t(x)) f(y) h_i(-x-y) \, dm_K(x) dm_K(y)  \right| \\
& \leq \| \widehat{g} - \widehat{g_t} \|_\infty \|f\|_2 \| h_i\|_2 \\
& \leq  \| \widehat{g} - \widehat{g_t} \|_\infty,
\end{align*}
where $g_t(x) = g(t+x)$.  
Hence $f*g*h_i(t) > \frac{\delta_1 \delta_2}{2r}$ whenever  $\| \widehat{g} - \widehat{g_t} \|_\infty < \frac{\delta_1 \delta_2}{2r}$. By \cite[Lemma 2.1]{ll}, the set of those $t$ contains a 
Bohr-$(k, \frac{\delta_1 \delta_2}{2r})$ set $B$ with $k \leq \frac{16r^2}{(\delta_1 \delta_2)^2}$.
\end{proof}




We are ready to prove \cref{thm:B+C+A_i_discrete}.
\begin{proof}[Proof of \cref{thm:B+C+A_i_discrete}]
By \cref{prop:B+C-A}, there exist a compact abelian group $K$, a homomorphism $\tau: G \to K$ with dense image, measurable functions $h_B, h_C: K \to [0, 1]$ and an invariant mean $\lambda$ on $G$ such that
\begin{enumerate}[label=(\roman*)]
    \item $\int_K h_B \, d m_K = d^*(B)$ and $\int_K h_C \, d m_K = d^*(C)$, 
    \item for all $i \in [r]$, $B + C + A_i \supset \tau^{-1} (\supp( h_B * h_C * \rho_{A_i}^{\lambda})).$
\end{enumerate}
In light of \cref{cor:sum_rho_1}, $\sum_{i=1}^r \rho_{A_i}^{\lambda} = 1$ almost everywhere. Therefore, by \cref{thm:B+C+A_i_compact}, there exist $k$ and $\eta$ depending only on $\delta$ and $r$ such that the support of $h_B * h_C * \rho_{A_i}^{\lambda}$ contains a Bohr-$(k, \eta)$ set in $K$ for some $i \in [r]$. \cref{lem:from_kronecker_up} then implies that $B + C + A_i$ contains a Bohr-$(k, \eta)$ set in $G$. 
\end{proof}

\begin{remark}
\label{rem:pwbohr-not-quant}
The proof of \cref{thm:B+C+A_i_discrete} follows a general phenomenon: if $D \subset G$ is a piecewise Bohr set, then for any partition $G = \bigcup_{i=1}^r A_i$, there is an $i \in [r]$ such that $D + A_i$ contains a Bohr set. However, if we did not know that $D$ has the form $B + C$, it is impossible to give any quantitative bounds on the rank and radius of the Bohr set in $D + A_i$. This necessitates the presence of triple sum $B + C + A_i$ in \cref{thm:B+C+A_i_discrete}.
\end{remark}


\section{Open questions}
\label{sec:open_question}

In the proofs of Theorems \ref{th:main-density} and \ref{th:main-partition}, the assumption that $\phi_1, \phi_2, \phi_3$ commute is used to provide a parameterized solution to the relation $w \in \phi_1(A) + \phi_2(A) + \phi_3(A)$. This concern raises the question:

\begin{question}
    Can the assumption that the $\phi_j$ commute in Theorems \ref{th:main-density} and \ref{th:main-partition} be omitted?
\end{question}

The Bohr sets in \cref{thm:B+C+A_i_compact} and \cref{thm:B+C+A_i_discrete} have the same rank $k$ and radius $\eta$. \cref{thm:B+C+A_i_compact} gives $k \ll \alpha^{-6}$ and $\eta \gg \alpha^3$, where $\alpha = (\delta_1 \delta_2 r^{-1})^{1/3}$. If we are only interested in \textit{translates} of Bohr sets (i.e., Bohr neighborhoods of some element), then better bounds are available. A result of Sanders \cite[Theorem 2.4]{SandersAdditiveStructure} implies that there exists $i$ such that $B+C+A_i$ contains a translate of a Bohr-$(k,\eta)$ set with $k \ll \alpha^{-1}$ and $\eta \geq \exp\left( - c \alpha^{-1} \log \alpha^{-1} \right)$, for some absolute constant $c$. We ask the following.

\begin{question} \label{q:optimal}
Is it possible to improve on $k$ and/or $\eta$ in \cref{thm:B+C+A_i_discrete}? Can we take $k \ll \alpha^{-1}$?
\end{question}

In the spirit of Ruzsa and Hegyv\'ari's result \cite{hr} on Bohr sets in $A+A-A-a$ mentioned in the introduction, we ask whether the Bohr set in \cref{thm:B+C+A_i_discrete} can be given by a fixed element of $C$. More precisely:
\begin{question}
If $B, C \subset G$ with $d^*(B), d^*(C)>0$ and $G = \bigcup_{i=1}^r A_i$, must there exist $c \in C$ and $i \in [r]$ such that $B+c+A_i$ contains a Bohr set?
\end{question}

The proof of \cref{thm:B+C+A_i_discrete} uses the fact that $D := B + C$ is a piecewise Bohr set to deduce the Bohr structure in $D + A_i$. It is natural to ask besides piecewise Bohr, what other conditions on $D$ guarantee the existence of a Bohr set in $D + A_i$.

\begin{question}\label{q:SyndeticSummand}
What is a sufficient condition on $D \subset G$ so that for any partition $G= \bigcup_{i=1}^r A_i$, there is $i \in [r]$ such that $D+A_i$ is Bohr set (or a translate of a Bohr set)?  In particular, does the assumption that $D$ is piecewise syndetic or $d^*(D)>0$ suffice? What if $G = \Z$ and $D = \mathbb{P}$ (the set of primes) or $D = \{n^2: n \in \N\}$?
\end{question}

Our \cref{th:main-density} generalizes \cref{th:br} in two ways: replacing the ambient group $\mathbb Z$ with an arbitrary countable abelian group, and replacing the endomorphisms $g\mapsto s_ig$ with commuting endomorphisms having finite index image.  The main result of \cite{griesmer-br} generalizes \cref{th:br} in a different way: the endomorphisms still have the form $g\mapsto s_i g$, but more summands are considered.  The following conjecture is a natural joint generalization of these results.

\begin{conjecture}\label{conj:Full}
Let $G$ be a (not necessarily countable) abelian group, let $d\geq 3$, let $\phi_1,\dots, \phi_d$ be endomorphisms of $G$ such that $[G:\phi_j(G)]<\infty$ for each $j$, and such that $\phi_1+\cdots +\phi_d=0$. Then for all $A\subset G$ with $d^*(A)>0$, the sumset $\phi_1(A)+\cdots + \phi_d(A)$ contains a Bohr set with rank and radius depending only on $d^*(A)$ and the indices $[G:\phi_j(G)]$.
\end{conjecture}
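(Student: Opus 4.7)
The plan is to extend the three-summand proof of \cref{th:main-density}: combine an explicit parametrization of the sumset with Furstenberg's correspondence principle, reduce to a compact-group analysis via a characteristic-factor argument, and finish with Fourier estimates in the compact group. The uncountable case should reduce to the countable one by passing to a countable subgroup $G_0 \leq G$ that contains $A$, is invariant under each $\phi_i$, and satisfies $[G_0:\phi_i(G_0)] \leq [G:\phi_i(G)]$ (arranged by throwing in finitely many coset representatives of each $\phi_i(G)$); since the sumset lies in the finite-index subgroup $\phi_1(G) + \cdots + \phi_d(G)$ of $G$, a Bohr set in $G_0$ inside this sumset extends to a Bohr set in $G$ of comparable rank and radius. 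The new parametrization, generalizing \cref{lem:correspondence_funny}, is: for $h \in A$ and $g_1, \ldots, g_{d-2} \in G_0$, set $a_1 = h$, $a_i = h + \phi_{i+1}(g_{i-1}) - \phi_{i-1}(g_{i-2})$ for $2 \leq i \leq d-1$ (with $g_0 = 0$), and $a_d = h + w - \phi_{d-1}(g_{d-2})$. Using $\sum_j \phi_j = 0$ and pairwise commutativity, one checks $\sum_j \phi_j(a_j) = \phi_d(w)$, since every cross term $\phi_i\phi_{i+1}(g_{i-1}) - \phi_{i+1}\phi_i(g_{i-1})$ vanishes. Thus $\phi_d(w) \in \phi_1(A) + \cdots + \phi_d(A)$ whenever all $a_j \in A$.

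By Furstenberg's correspondence principle one then obtains an ergodic $G_0$-system $(X, \mu, T)$ and $f : X \to [0,1]$ with $\int f \, d\mu = d^*(A)$ such that the parametrized set of $w$ contains the support of the $(d-2)$-parameter average
\[
I(w) := UC-\lim_{g_1, \ldots, g_{d-2}} \int_X f \cdot \prod_{i=2}^{d-1} T_{\phi_{i+1}(g_{i-1}) - \phi_{i-1}(g_{i-2})} f \cdot T_{w - \phi_{d-1}(g_{d-2})} f \, d\mu,
\]
so that $\phi_d(\supp I) \subset \phi_1(A) + \cdots + \phi_d(A)$, and it suffices to exhibit a Bohr set in $\supp I$. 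The hard part will be to verify that a compact abelian factor of $\mathbf{X}$ is characteristic for $I$. For $d=3$ this is \cite{abb}; for $d \geq 4$ the generic multilinear average would require an order-$(d-2)$ Host-Kra nilfactor, but the constraint $\sum \phi_j = 0$ combined with pairwise commutativity should allow an iterated van der Corput argument that extracts each $g_j$ in turn, recursively reducing the ``complexity'' and ultimately showing that some finite abelian extension of the Kronecker factor (invariant under each $\phi_i^*$, analogous to the extension constructed in \cref{prop:reduce_to_integral}) is characteristic.

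Granting this characteristic-factor result, \cref{lem:Embeddings} produces a compact abelian group $K$ and continuous commuting endomorphisms $\tilde\phi_i : K \to K$ with $\sum \tilde\phi_i = 0$ and $[K:\tilde\phi_i(K)] \leq [G:\phi_i(G)]$, and $I(w)$ reduces to a $(d-2)$-fold integral on $K^{d-2}$ of $\tilde f := \E(f \mid K)$. It then remains to generalize \cite[Proposition 4.3]{ll} from $d=3$ to arbitrary $d \geq 3$ in compact groups: for $\tilde f : K \to [0,1]$ with $\int \tilde f \, dm_K \geq \delta$, the support of the resulting multi-integral contains a Bohr-$(k,\eta)$ set with $k,\eta$ depending only on $\delta$ and the indices $[K:\tilde\phi_i(K)]$. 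This can be tackled by Fourier analysis: write $\tilde f = \sum_\chi \widehat{\tilde f}(\chi) \chi$, use the parametrization together with $\sum \tilde\phi_i = 0$ to see that only characters satisfying compatible linear relations under the $\tilde\phi_i^*$ contribute to $I(w)$, then separate the few large Fourier coefficients of $\tilde f$ (producing an almost-periodic main term bounded below on a Bohr set) from the small tail (whose contribution to $I$ is controlled via Parseval and the $L^2$-boundedness of $\tilde f$). Pulling the Bohr set from $K$ back to $G_0$ via \cref{lem:from_kronecker_up}, applying $\phi_d$ via \cref{lem:phiB_Bohr}, and extending to $G$ by the finite-index argument above completes the proof, with $k$ and $\eta$ depending only on $d^*(A)$ and the indices $[G:\phi_j(G)]$.
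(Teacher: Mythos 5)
This statement is not a theorem in the paper: it is explicitly labeled as a \emph{Conjecture} (\cref{conj:Full}) in the ``Open questions'' section, and the paper offers no proof of it. Indeed, immediately after stating it, the authors observe that the naive reduction to \cref{th:main-density} via $\psi := \sum_{j\geq 3}\phi_j$ fails when $[G:\psi(G)]$ is infinite, and they explicitly caution that ``it may be necessary to impose some additional hypotheses on the $\phi_j$,'' pointing to \cite[Section 4]{griesmer-br} for discussion. So you are attempting to resolve an open problem that the authors themselves are uncertain is even true as stated.

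Beyond that mismatch, your argument has two genuine unfilled gaps, and they are exactly where the difficulty of the problem lives. First, your parametrization is correct (I checked the cancellation for $d=3$ and $d=4$), and the passage to a countable $\phi_i$-invariant $G_0$ is reasonable. But the claim that ``some finite abelian extension of the Kronecker factor is characteristic'' for the $(d-2)$-parameter average $I(w)$ is asserted, not proved. The result in \cite{abb} that makes the $d=3$ case work is specifically a theorem about single-parameter double averages $\int f\cdot T_{\phi(g)}f\cdot T_{\psi(g)}f$; for $d\geq 4$ your averages involve $d-1$ shifted copies of $f$ over $d-2$ independent parameters, and generically such averages are controlled by higher Host--Kra nilfactors, which are not abelian group rotations. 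Saying that ``an iterated van der Corput argument should recursively reduce the complexity'' is precisely the open part; nothing in the constraint $\sum\phi_j=0$ or commutativity obviously forces this, and if it did the authors would likely have proved the conjecture. Second, even granting a Kronecker-type characteristic factor, you need a $d$-summand analogue of \cite[Proposition 4.3]{ll} in compact groups; the $d=3$ version of that proposition is proved via an arithmetic regularity lemma and is substantial, and your sketch (``separate large Fourier coefficients, control the tail by Parseval'') does not engage with the structure of the $(d-2)$-fold integral or explain why the main term is bounded below on a Bohr set. Until both of these gaps are closed --- and the authors indicate the first may require new hypotheses --- this is not a proof.
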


Defining endomorphism $\psi: G \to G$ by $\psi(g) := \sum_{j=3}^d \phi_j(g)$. Then $\phi_1 + \phi_2 + \psi = 0$ and 
\[
     \sum_{j=1}^d \phi_j(A) \supset \phi_1(A) + \phi_2(A) + \psi(A).
\]
Therefore, if $[G:\psi(G)]$ is finite, then \cref{conj:Full} immediately follows from \cref{th:main-density}. However, it is not true in general that $\psi(G)$ has finite index (for example, take $d = 4$, $\phi_3 = - \phi_4$), and so \cref{conj:Full} is genuinely interesting. It may be necessary to impose some additional hypotheses on the $\phi_j$; see \cite[Section 4]{griesmer-br} for more discussion.

Along the same lines, we have the following conjecture for partition that extends \cref{th:main-partition}.

\begin{conjecture}
Let $G$ be a (not necessarily countable) abelian group, let $d\geq 3$ and let $\phi_1,\dots, \phi_d$ be endomorphisms of $G$ such that $[G:\phi_j(G)]<\infty$ for each $j$. Suppose $\sum_{j \in S} \phi_j = 0$ for some non-empty subset $S \subset [d]$.
Then for every finite partition $G = \bigcup_{i=1}^r A_i$, there exists $i \in [r]$ such that $\sum_{j=1}^d \phi_j(A_i)$
contains a Bohr-$(k, \eta)$ set, where $k$ and $\eta$ depend only on $r$ and the indices $[G: \phi_j(G)]$.
\end{conjecture}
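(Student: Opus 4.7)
My plan follows the blueprint of the proof of \cref{th:main-partition}: first establish the statement in compact abelian groups, then transfer to discrete groups by a correspondence principle. I also assume the $\phi_j$ commute, as all the available machinery depends on this. The base case $d=3$ is already covered: if $|S|=3$ then \cref{th:main-density} applied to the cell $A_i$ of maximum upper Banach density ($\geq 1/r$) yields the required Bohr set, while if $|S|=2$ the statement reduces by renaming to \cref{th:main-partition}. The genuinely new regime is $d\geq 4$.

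\emph{Step 1: compact-group analogue.} I aim to prove the following extension of \cref{prop:partition}. Let $K$ be a compact abelian group, let $\tilde\phi_1,\ldots,\tilde\phi_d:K\to K$ be commuting continuous endomorphisms with finite-index images satisfying $\sum_{j\in S}\tilde\phi_j=0$, and let $h_1,\ldots,h_r:K\to[0,1]$ be measurable with $\sum_{i=1}^r h_i \geq 1$ $m_K$-almost everywhere. Then for some $i\in[r]$, the set
$$\Bigl\{\textstyle\sum_{j=1}^d \tilde\phi_j(x_j) : x_j\in K,\ h_i(x_j)>0 \text{ for every } j\Bigr\}$$
contains a Bohr-$(k,\eta)$ set with $k,\eta$ depending only on $r$ and the indices $[K:\tilde\phi_j(K)]$. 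The idea is to fix $j_0\in S$ and use $\sum_{j\in S}\tilde\phi_j=0$ to solve for $x_{j_0}$ in terms of $(x_j)_{j\in S\setminus\{j_0\}}$ modulo $\ker\tilde\phi_{j_0}$, letting the remaining $d-1$ variables range freely. This casts the question as a multilinear average in the spirit of Proposition 3.4 of \cite{ll}, which one may hope to handle by adapting the Hales--Jewett based density increment there.

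\emph{Step 2: correspondence principle.} I would generalize \cref{prop:correspondence_principle_bohr} to $d$ summands. Fix $j_0\in S$, pick an extremal invariant mean $\mu$ on $\phi_{j_0}(G)$, and use \cref{lem:existence_mean_pullback,lem:H_G_mean} to obtain an invariant mean $\nu$ on $G$ with $\phi_{j_0,*}\nu=\mu$; for each $j\in [d]$, extend $\phi_{j,*}\nu$ from $\phi_j(G)$ to an invariant mean $\nu_j$ on $G$. Iterating the Bochner--Herglotz decomposition used in the proof of \cref{prop:correspondence_principle_bohr} and invoking \cref{lem:inv_mean_need_one_extremal} at each of the $d-1$ convolution steps (the extremality of $\nu_{j_0}$ providing the needed Fubini identity) should yield
$$\sum_{j=1}^d \phi_j(A_i) \;\supseteq\; \tau^{-1}\!\Bigl(\supp\bigl(\rho_{\phi_1(A_i)}^{\nu_1}\ast\cdots\ast\rho_{\phi_d(A_i)}^{\nu_d}\bigr)\Bigr),$$
where $\tau:G\to bG$ is the canonical embedding. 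Bounding each $\rho_{\phi_j(A_i)}^{\nu_j}$ below by pullbacks of $\rho_{A_i}^\nu$ via \cref{cor:right_containment}, and using $\sum_i\rho_{A_i}^\nu = 1$ a.e.\ from \cref{cor:sum_rho_1}, reduces the problem to Step 1 with $h_i=\rho_{A_i}^\nu$; the Bohr set is then transferred back to $G$ via \cref{lem:from_kronecker_up,lem:phiB_Bohr}.

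\emph{Main obstacle.} Step 1 with $|S|\geq 3$ and $d>|S|$ is by far the hardest part. The existing proof of \cref{prop:partition} handles a three-variable expression and uses a Hales--Jewett input; whether this argument extends to Rado-type equations with more unknowns, or whether a wholly new ergodic or Fourier-analytic idea is needed, is unclear. The $d-|S|$ unconstrained variables $(x_j)_{j\notin S}$, being independent of the $S$-relation and Haar-averaged freely, threaten to smear out the Bohr structure produced by the $S$-variables. A natural attempt is induction on $|T|=d-|S|$ via Fubini: fix $\sum_{j\in T}\tilde\phi_j(x_j)$ and apply the inductive hypothesis to the inner $|S|$-fold average, then recombine using a Fourier-$\ell^\infty$ stability lemma in the spirit of \cite[Lemma 2.12]{ll}. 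The delicate point is to control the rank and radius of the resulting Bohr set in terms of $r$ and the indices alone, without accumulating a dependence on $d$ that blows up during the induction.
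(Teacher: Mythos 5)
The statement you are addressing is one of the paper's concluding \emph{conjectures} (stated in the open-questions section); the paper contains no proof of it, so there is nothing to compare your argument against, and the relevant question is whether your proposal closes the problem. It does not: by your own admission, Step 1 --- the compact-group analogue for general $d$ and $|S|$ --- is left open, and that is exactly where the content of the conjecture lies. The compact results available (\cref{th:ll_compact}, \cref{prop:partition} from \cite{ll}) cover only the two specific threefold patterns, so reducing the conjecture to ``an extension of \cref{prop:partition}'' is a reformulation rather than a solution. The smearing problem you flag is already fatal in the simplest new case $|S|=2$, $d=4$ (say $\phi_1+\phi_2=0$): the sumset is $\phi_1(A_i)-\phi_1(A_i)+\phi_3(A_i)+\phi_4(A_i)$, and adding the two unconstrained images to whatever Bohr structure the pair $\phi_1(A_i),-\phi_1(A_i)$ produces need not yield a Bohr set containing $0$ with uniform parameters --- this is precisely the difficulty animating \cref{thm:B+C+A_i_discrete}, \cref{rem:pwbohr-not-quant} and \cref{q:SyndeticSummand}, where only translates or piecewise Bohr structure survive. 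So the proposal replaces the conjecture by another statement that is at least as hard.

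Step 2 also does not go through as described. The proof of \cref{prop:correspondence_principle_bohr} is not a generic ``iterate Bochner--Herglotz at each convolution step'': it hinges on the fact that $1_B *_{\lambda} 1_{-B}$ is \emph{positive definite}, which is available only because the summands contain the matched pair $\phi_2(A), -\phi_2(A)$ convolved against the same extremal mean; the decomposition into almost periodic plus null, and the factorization of Fourier coefficients via \cref{lem:inv_mean_need_one_extremal} (which needs one of the two means in each double integral to be extremal), exploit exactly this autocorrelation structure. For a general relation $\sum_{j\in S}\phi_j=0$ with $|S|\geq 3$, or with summands outside $S$, there is no such pair, and a $d$-fold convolution of $d$ different indicator densities with respect to $d$ different means is not positive definite, so the claimed inclusion $\sum_j \phi_j(A_i)\supseteq \tau^{-1}(\supp(\rho^{\nu_1}_{\phi_1(A_i)}*\cdots*\rho^{\nu_d}_{\phi_d(A_i)}))$ is unsubstantiated. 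Two further mismatches with the statement itself: the conjecture does not assume the $\phi_j$ commute, and it allows uncountable $G$, whereas your base case $|S|=d=3$ invokes \cref{th:main-density}, which requires $G$ countable; so even the base case is not fully covered as stated.
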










\end{document}